\newtheorem{theorem}{Theorem}
\newtheorem{lemma}{Lemma}
\theoremstyle{definition}
\newtheorem{claim}{Claim}
\newtheorem{observation}{Observation}
\newtheorem{case}{Case}
\begin{document}

\title{Characterizing forbidden pairs for spanning $\varTheta$-subgraphs of 2-connected graphs\thanks{Supported by NSFC (Nos. 12171393, 12001242) and Shaanxi Fundamental Science Research
Project for Mathematics and Physics (No. 22JSZ009).}}
\author{Binlong Li$^{a,b}$,~Ziqing Sang$^{a,b}$,~Shipeng Wang$^{c,}$\thanks{Corresponding author. E-mail: binlongli@nwpu.edu.cn (B. Li), ziqing\_sang@mail.nwpu.edu.cn (Z. Sang),
spwang22@ujs.edu.cn (S. Wang)}~\\[2mm]
{\small $^a$School of Mathematics and Statistics,}\\[-0.8ex]
{\small Northwestern Polytechnical University, Xi'an, Shaanxi 710072, China}\\
{\small $^b$Xi'an-Budapest Joint Research Center for Combinatorics,}\\[-0.8ex]
{\small Northwestern Polytechnical University, Xi'an, Shaanxi 710072, China}\\
{\small $^c$Department of Mathematics,}\\[-0.8ex]
{\small Jiangsu University, Zhenjiang, Jiangsu 212013, China}\\
}
\date{}

\maketitle

\begin{abstract}
Let $\mathcal{F}$ be a set of connected graphs, and let $G$ be a graph. We say that $G$ is \emph{$\mathcal{F}$-free} if it does not contain $F$ as an induced subgraph for all $F\in\mathcal{F}$, and we call $\mathcal{F}$ a forbidden pair if $|\mathcal{F}|=2$. A \emph{$\varTheta$-graph} is the graph consisting of three internally disjoint paths with the same pair of end-vertices. If the $\varTheta$-subgraph $T$ contains all vertices of $G$, then we call $T$ a \emph{spanning $\varTheta$-subgraph} of $G$. In this paper, we characterize all pairs of connected graphs $R,S$ such that every 2-connected	$\{R,S\}$-free graph has a spanning $\varTheta$-subgraph. In order to obtain this result, we also characterize all minimal 2-connected non-cycle claw-free graphs without spanning $\varTheta$-subgraphs.
\smallskip

\emph{Keywords:} forbidden subgraph; spanning $\varTheta$-subgraph; claw-free; minimality
\end{abstract}

\section{Introduction}
We follow the most common graph-theoretic terminology and notation, and for concepts not defined here we refer to Bondy and Murty \cite{2}.

Let $G$ be a graph, $v \in V(G)$, $S\subseteq V(G)$ and $H$ be a subgraph of $G$. 
We use $N_{G}(v)$ and $N_{H}(v)$ to denote the set of neighbors of $v$ in $G$ and $H$, respectively. 
We use $G-H$ to denote the subgraph
induced by $V(G)\setminus V(H) $.
Let $N_{S}(H)=\bigcup_{v\in V(H)}N_{G}(v)\cap S$,
and let
$\langle S\rangle_{G}$ denote the subgraph of $G$ induced by $S$.
We often omit the subscript when the underlying graph is clear.
Let $\mathcal{F}$ be a set of connected graphs. A graph $G$ is said to be \emph{$\mathcal{F}$-free} if $G$ does not contain $F$ as an induced subgraph for all $F\in\mathcal{F}$, and we call $F$ a \emph{forbidden subgraph} of $G$ and
$\mathcal{F}$ a forbidden pair if $|\mathcal{F}|=2$. If $G$ is $K_{1,3}$-free then we say that $G$ is \emph{claw-free}. Let $N_{i,j,k}$ be the graph obtained by attaching three vertex-disjoint paths of lengths $i,j,k\geq 0$ to a triangle (so $K_3=N_{0,0,0}$). In the special case when $i,j\geq 1$ and $k=0$ (or $i\geq 1$ and $j=k=0$), $N_{i,j,k}$ is also denoted $B_{i,j}$ (or $Z_{i}$, respectively). We present in Figure~\ref{Nijk} several graphs that play important roles in this paper. 
A cycle (or a path) in a graph $G$ is called \emph{Hamilton cycle} (or \emph{Hamilton path})  if it contains all vertices of $G$. We say that $G$ is \emph{hamiltonian} if it contains a Hamilton cycle. For a path $P$ in a graph $G$, we use $|P|$ to denote the length of $P$.

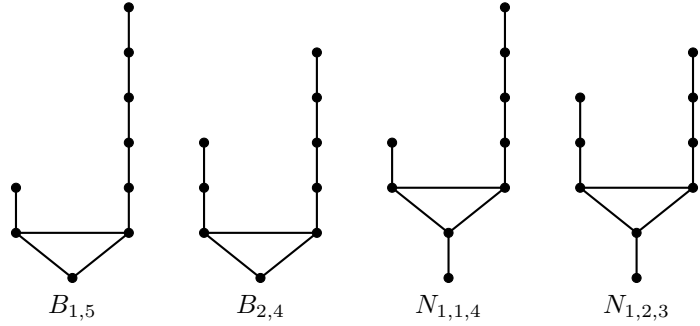
\begin{figure}[h]
\centering
\begin{tikzpicture}[scale=0.1]

\begin{scope}
\draw[fill=black] (0,0) {coordinate (x1)} circle (0.6);
\foreach \x in {1,2} \draw[fill=black] (-7.5,\x*6) circle (0.6);
\coordinate (y1) at (-7.5,6); \coordinate (y2) at (-7.5,12);
\foreach \x in {1,2,...,6} \draw[fill=black] (7.5,\x*6) circle (0.6);
\coordinate (z1) at (7.5,6); \coordinate (z2) at (7.5,36);
\draw[thick] (x1)--(y1)--(z1)--(x1);
\draw[thick] (y1)--(y2) (z1)--(z2);
\node[below] at (0,-1) {$B_{1,5}$};
\end{scope}

\begin{scope}[xshift=25cm]
\draw[fill=black] (0,0) {coordinate (x1)} circle (0.6);
\foreach \x in {1,2,3} \draw[fill=black] (-7.5,\x*6) circle (0.6);
\coordinate (y1) at (-7.5,6); \coordinate (y2) at (-7.5,18);
\foreach \x in {1,2,...,5} \draw[fill=black] (7.5,\x*6) circle (0.6);
\coordinate (z1) at (7.5,6); \coordinate (z2) at (7.5,30);
\draw[thick] (x1)--(y1)--(z1)--(x1);
\draw[thick] (y1)--(y2) (z1)--(z2);
\node[below] at (0,-1) {$B_{2,4}$};
\end{scope}

\begin{scope}[xshift=50cm]
\foreach \x in {1,2} \draw[fill=black] (0,12-\x*6) circle (0.6);
\coordinate (x1) at (0,6); \coordinate (x2) at (0,0);
\foreach \x in {1,2} \draw[fill=black] (-7.5,6+\x*6) circle (0.6);
\coordinate (y1) at (-7.5,12); \coordinate (y2) at (-7.5,18);
\foreach \x in {1,2,...,5} \draw[fill=black] (7.5,6+\x*6) circle (0.6);
\coordinate (z1) at (7.5,12); \coordinate (z2) at (7.5,36);
\draw[thick] (x1)--(y1)--(z1)--(x1);
\draw[thick] (x1)--(x2) (y1)--(y2) (z1)--(z2);
\node[below] at (0,-1) {$N_{1,1,4}$};
\end{scope}

\begin{scope}[xshift=75cm]
\foreach \x in {1,2} \draw[fill=black] (0,12-\x*6) circle (0.6);
\coordinate (x1) at (0,6); \coordinate (x2) at (0,0);
\foreach \x in {1,2,3} \draw[fill=black] (-7.5,6+\x*6) circle (0.6);
\coordinate (y1) at (-7.5,12); \coordinate (y2) at (-7.5,24);
\foreach \x in {1,2,...,4} \draw[fill=black] (7.5,6+\x*6) circle (0.6);
\coordinate (z1) at (7.5,12); \coordinate (z2) at (7.5,30);
\draw[thick] (x1)--(y1)--(z1)--(x1);
\draw[thick] (x1)--(x2) (y1)--(y2) (z1)--(z2);
\node[below] at (0,-1) {$N_{1,2,3}$};
\end{scope}

\end{tikzpicture}
\caption{Graphs $B_{1,5}$, $B_{2,4}$, $N_{1,1,4}$ and $N_{1,2,3}$.}\label{Nijk}
\end{figure}

The first characterization of forbidden pairs of connected subgraphs for hamiltonicity of 2-connected graphs was given by Bedrossian in \cite{91}. Faudree and Gould refined it and gave a complete characterization of forbidden pairs for hamiltonicity of 2-connected graphs with order at least 10. 

\begin{theorem}[Faudree and Gould \cite{5}] \label{large order}
 The only connected graph $R$ of order at least $3$ such that every $2$-connected $R$-free graph is hamiltonian, is $P_3$. Let $R,S$ be a pair of connected graphs of order at least $3$ with $R,S\neq P_3$. Then every $2$-connected $\{R,S\}$-free graph of order at least $10$ is hamiltonian, if and only if (up to symmetry), $R=K_{1,3}$ and $S$ is an induced subgraph of $P_{6}$, $B_{1,2}$, $N_{1,1,1}$ or $Z_3$.
\end{theorem}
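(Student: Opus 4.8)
The assertion is a characterisation, so past the easy single-graph part there are two directions. The single-graph part is quick: a connected $P_3$-free graph has transitive adjacency, hence is complete, and a $2$-connected complete graph on at least three vertices is hamiltonian; conversely, if $R$ is connected with $|V(R)|\ge 3$ and $R\ne P_3$, then a $2$-connected non-hamiltonian $R$-free graph exists --- $K_{2,3}$ serves unless $R$ is one of its connected induced subgraphs $P_3,K_{1,3},C_4,K_{2,3}$, the $9$-vertex graph $L(\varTheta_{3,3,3})$ (which is $2$-connected, claw-free and non-hamiltonian) handles $R=K_{1,3}$, and $K_{1,1,3}$ (which is $C_4$-free, $K_{2,3}$-free and non-hamiltonian) handles $R\in\{C_4,K_{2,3}\}$. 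Hence only $R=P_3$ works, and in the pair case we may take $R,S\ne P_3$.

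For necessity in the pair case I would use the classical forbidden-subfamily method. Assemble a stock of $2$-connected non-hamiltonian graphs: the sporadic $K_{2,3}$, $K_{1,1,3}$, Petersen graph, and --- crucially --- two infinite families of unbounded order: the generalised theta graphs $\varTheta_{a,b,c}$ with $a,b,c\ge 2$ (non-hamiltonian, triangle-free, maximum degree $3$, containing an induced claw at each branch vertex), and their line graphs $L(\varTheta_{a,b,c})$ with $a,b,c\ge 3$ ($2$-connected, claw-free, and non-hamiltonian because a generalised theta graph with all three segments of length at least $3$ has no dominating closed trail, via Harary--Nash-Williams). Then run a case analysis on the shapes of $R$ and $S$. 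The first milestone is to force one of them, say $R$, to equal $K_{1,3}$: if not, then according to whether $R$ or $S$ contains a triangle, has a vertex of degree $\ge 4$, or is a long path, a subdivided claw, or a theta-type graph, one selects a member of $\{\varTheta_{a,b,c}\}$, of $\{L(\varTheta_{a,b,c})\}$, or a sporadic graph of suitable order that is $\{R,S\}$-free, a contradiction. The second milestone is, with $R=K_{1,3}$ fixed, to bound $S$: testing $S$ against the claw-free families of unbounded order shows that every connected $S$ not an induced subgraph of $P_6$, $B_{1,2}$, $N_{1,1,1}$ or $Z_3$ appears in one of them, so $\{K_{1,3},S\}$ does not force hamiltonicity. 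The hypothesis ``order $\ge 10$'' is precisely what discards the finitely many small coincidences, and is where Faudree--Gould's version refines Bedrossian's.

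For sufficiency, fix $R=K_{1,3}$ and let $S$ be an induced subgraph of one of the four graphs. Here I would invoke Ryj\'a\v{c}ek's closure for claw-free graphs: for connected claw-free $G$, the closure $\mathrm{cl}(G)$ is the line graph $L(H)$ of some graph $H$, $G$ is hamiltonian iff $\mathrm{cl}(G)$ is, and --- using the known stability of the classes $\{K_{1,3},S\}$-free under the closure for these particular $S$ --- $\mathrm{cl}(G)$ remains $\{K_{1,3},S\}$-free. This reduces the problem to showing that a $2$-connected line graph $L(H)$ of order at least $10$ that is $S$-free is hamiltonian, equivalently (Harary--Nash-Williams) that $H$ has a dominating closed trail. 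One then translates $S$-freeness of $L(H)$ into a structural restriction on $H$ --- roughly, that branches hanging off vertices of degree $\ge 3$ in $H$ are short, since a long induced $P_k$ or an induced $N_{i,j,k}$ in $L(H)$ forces a correspondingly long subdivided structure in $H$ --- and then applies the known Catlin-type reduction results together with supereulerian and dominating-closed-trail theorems of Veldman and others, which guarantee such a trail in essentially $2$-edge-connected graphs of large order.

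The main obstacle, I expect, is the sufficiency proof for the two extremal cases $S=B_{1,2}$ and $S=Z_3$: there the closure is least rigid and the structure of $H$ is only loosely pinned down, so producing the dominating closed trail requires a detailed case analysis over the block-and-branch decomposition of $H$, and it is this analysis that throws up the small-order exceptions responsible for the ``$\ge 10$'' threshold. A secondary but unavoidable task is verifying the tightness of the four extremal graphs: for every connected $S$ that is not an induced subgraph of any of $P_6$, $B_{1,2}$, $N_{1,1,1}$, $Z_3$, one must exhibit an explicit large $2$-connected claw-free non-hamiltonian graph containing $S$, so the list of constructions used at the second milestone of the necessity argument must be shown complete.
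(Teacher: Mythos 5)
This statement is quoted from Faudree and Gould \cite{5}; the paper you are reading does not prove it, so there is no internal argument to compare against and your proposal has to stand on its own as a reconstruction of the cited proof. Your treatment of the single-graph statement is correct and complete ($P_3$-freeness forces completeness; $K_{2,3}$, $K_{1,1,3}$ and $L(\varTheta_{3,3,3})$ between them kill every other $R$), and your necessity strategy is the standard one, though as written it contains a logical inversion: to refute a pair $\{K_{1,3},S\}$ you need a large $2$-connected claw-free non-hamiltonian graph that does \emph{not} contain $S$ as an induced subgraph, i.e.\ $S$ must be a common induced subgraph of all the test families; your sentence ``every connected $S$ not an induced subgraph of $P_6$, $B_{1,2}$, $N_{1,1,1}$ or $Z_3$ appears in one of them'' asserts the opposite of what is needed. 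Also, the families $\{\varTheta_{a,b,c}\}$ and $\{L(\varTheta_{a,b,c})\}$ alone do not pin $S$ down to induced subgraphs of exactly those four graphs (for instance they do not separate $Z_3$ from $Z_4$ or $B_{1,2}$ from $B_{2,2}$); additional claw-free non-hamiltonian families with triangles placed at controlled distances are required, and exhibiting a complete such list is the ``secondary task'' you mention but do not carry out.

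The genuine gap is in the sufficiency half, which is the mathematical core of the theorem. Your reduction rests on the claim that the classes of $\{K_{1,3},S\}$-free graphs are stable under Ryj\'a\v{c}ek's closure for all four choices of $S$. This is false for $S=Z_3$: Brousek, Ryj\'a\v{c}ek and Schiermeyer showed that the class of $\{K_{1,3},Z_3\}$-free graphs is \emph{not} closed under the closure operation, so $\mathrm{cl}(G)$ need not be $Z_3$-free and the proposed passage to an $S$-free line graph breaks down precisely in one of the two extremal cases you flag as hardest. (The closure technique also postdates the result; the actual proofs of the four sufficiency statements, due to Bedrossian and to Faudree, Gould, Ryj\'a\v{c}ek and Schiermeyer, are direct longest-cycle and vertex-insertion arguments carried out separately for each pair, and it is these arguments, not a Catlin-type reduction, that produce the finitely many exceptional graphs of order less than $10$.) Even where stability does hold, the step ``a $2$-connected $S$-free line graph of order at least $10$ is hamiltonian'' is the entire difficulty and is not supplied. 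So the proposal is a reasonable roadmap for the easy direction but does not constitute a proof of the theorem.
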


Since then, the characterizations of forbidden pairs for graph properties have been studied by many researchers. For a property $\mathcal{P}$, it is a popular research topic to give forbidden induced subgraph conditions forcing a graph to have the property $\mathcal{P}$, see, e.g. for hamiltonian properties refer to~\cite{91,5,LiVr}, for perfect matchings refer to \cite{FuFuPlSaSc,FuKaLuOtPlSa}, for 2-factors refer to~\cite{AlFuSa,AlFuSa2,FaFaRy,HoRyVrWaXi}, for supereulerianity refer to~\cite{llw,LvXi,LvXi2}. In general, characterizing those forbidden induced subgraphs $F$ for a graph to have some property $\mathcal{P}$ is not very difficult. However, characterizing the forbidden sets $\mathcal{F}$ ($|\mathcal{F}|\geq 2$)  for a graph to have some property $\mathcal{P}$ is not easy and then it will be interesting.

A \emph{$\varTheta$-graph} $T$ is the graph consisting of three internally disjoint paths 
with the same pair of end-vertices. 
If a $\varTheta$-subgraph $T$ contains all vertcies of $G$, then we call it a \emph{spanning $\varTheta$-subgraph} of $G$. We notice that if a graph $G$ has a Hamilton cycle, then $G$ has a spanning $\varTheta$-subgraph (unless $G$ itself is a cycle); and if $G$ has a spanning $\varTheta$-subgraph, then $G$ has a Hamilton path.

Continue this line of investigation, in this paper we consider the property $\mathcal{P}$ with respect to the existence of spanning $\varTheta$-subgraphs. We first characterize all graphs $R$ such that every 2-connected $R$-free non-cycle graph has a spanning $\varTheta$-subgraph.

\begin{observation}\label{Th1}
 Let $R$ be a connected graph of order at least $3$. Then every $2$-connected $R$-free non-cycle graph has a spanning $\varTheta$-subgraph, if and only if $R=P_3$.
\end{observation}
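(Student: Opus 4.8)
The plan is to prove the two implications separately. For the ``if'' direction, suppose $R=P_3$ and let $G$ be a $2$-connected $P_3$-free non-cycle graph. A connected $P_3$-free graph is complete, because a shortest path joining two non-adjacent vertices would induce a $P_3$; so, being $2$-connected and not a cycle, $G=K_n$ for some $n\ge 4$. Such a $K_n$ is hamiltonian and is not a cycle, hence has a spanning $\varTheta$-subgraph (a Hamilton cycle together with any chord), and this direction is done.

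For the ``only if'' direction, assume $R$ is a connected graph of order at least $3$ with the stated property. The mechanism I would use is: whenever $H$ is a $2$-connected non-cycle graph having \emph{no} spanning $\varTheta$-subgraph, $H$ cannot be $R$-free, so $R$ is an induced subgraph of $H$; feeding in a few well-chosen graphs $H$ then pins $R$ down. First take $H=K_{2,4}$ and $H=K_{1,1,4}$. Each is $2$-connected and is not a cycle, and each is non-traceable: in each graph every ``outer'' vertex has neighbourhood exactly equal to the $2$-element ``inner'' set, so on a hypothetical Hamilton path at most one outer vertex could be an interior vertex, while the remaining three would have to be its endpoints. Since a spanning $\varTheta$-subgraph contains a Hamilton path, neither $K_{2,4}$ nor $K_{1,1,4}$ has a spanning $\varTheta$-subgraph. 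Hence $R$ is a triangle-free induced subgraph of $K_{1,1,4}$ (triangle-free because $R$ also embeds into the bipartite graph $K_{2,4}$). But a triangle-free induced subgraph of $K_{1,1,4}$ of order at least $3$ cannot contain both vertices of the inner pair, so it is an induced star; thus $R=K_{1,k}$ for some $k\ge 2$.

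It remains to rule out $k\ge 3$, and this is exactly where the structural core of the paper enters: one needs at least one $2$-connected claw-free non-cycle graph $G_0$ that has no spanning $\varTheta$-subgraph. Granting such a $G_0$, the hypothesis forces $R$ to be an induced subgraph of $G_0$, hence claw-free, which excludes $K_{1,k}$ for every $k\ge 3$. Therefore $R=K_{1,2}=P_3$, and the proof is complete.

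The ``if'' direction and the reductions via $K_{2,4}$ and $K_{1,1,4}$ are routine and I would write them out in full. The genuine obstacle is the last step, the existence of a $2$-connected claw-free non-cycle graph with no spanning $\varTheta$-subgraph: hamiltonian non-cycle graphs always have one, and more generally a spanning $\varTheta$-subgraph is hard to avoid in a dense enough $2$-connected graph, so a witnessing claw-free graph must be genuinely special --- non-hamiltonian, and in practice non-traceable. Exhibiting one is precisely what the classification of the minimal $2$-connected claw-free non-cycle graphs without spanning $\varTheta$-subgraphs, announced in the abstract, provides, and I would invoke that classification here rather than reprove it.
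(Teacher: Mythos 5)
Your overall strategy is exactly the paper's: feed specific $2$-connected non-cycle graphs with no spanning $\varTheta$-subgraph into the hypothesis and intersect the resulting constraints on $R$. Indeed your test graphs $K_{2,4}$ and $K_{1,1,4}$ are the paper's $G_1$ and $G_2$ with $k=4$, and your non-traceability arguments for them are correct; the paper gets ``$R$ is a star or contains an induced $C_4$'' from $G_1$ alone and then kills both $C_4$ and $K_{1,3}$ with a single further graph $G_6$, whereas you first pin $R$ down to a star and then need only claw-freeness of one more witness. The one soft spot is that final step. You write that the existence of a $2$-connected, claw-free, non-cycle graph with no spanning $\varTheta$-subgraph is ``precisely what the classification provides,'' but Theorem~\ref{Th3} is a conditional statement about \emph{minimal} such graphs and does not by itself assert that any exist; citing it does not discharge the existence claim. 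What is needed is an explicit witness. The paper supplies one directly ($G_6$ in Figure~\ref{G12345678}, which it checks is claw-free and $C_4$-free and has no spanning $\varTheta$-subgraph). Alternatively, you could extract a witness from the paper's machinery rather than from Theorem~\ref{Th3} itself: any unfoldment of $M_1=K_4$ (e.g.\ any member of $\mathcal{H}_1$) is claw-free by Lemma~\ref{LeUnfold}(1), $2$-connected and not a cycle by Lemma~\ref{LePropertyFG}(1), and has no spanning $\varTheta$-subgraph by Lemma~\ref{LePropertyFG}(4), since $K_4$ has four vertices of odd degree and hence no Euler trail. With that substitution your proof is complete and matches the paper's in substance.
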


We characterize all pairs $R,S$ of connected graphs such that every 2-connected $\{R,S\}$-free non-cycle graph has a spanning $\varTheta$-subgraph, which is our first main result. In order to avoid trivial cases, we suppose that neither $R$ nor $S$ is $P_{3}$ by  virtue of Observation \ref{Th1}.

\begin{theorem}\label{Th2}
 Let $R,S$ be a pair of connected graphs of order at least $3$, $R, S\neq P_3$. Then every 2-connected $\{R,S\}$-free non-cycle graph has a spanning $\varTheta$-subgraph, if and only if (up to symmetry), $R=K_{1,4}$ and $S=P_4$, or $R=K_{1,3}$ and $S$ is an induced subgraph of $B_ {1,5}, B_{2,4}, N_{1,1,4}$ or $N_{1,2,3}$.
\end{theorem}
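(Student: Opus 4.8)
\medskip

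The plan is to prove the two implications of the equivalence separately, using as the main engine the paper's classification of the minimal obstructions in the claw-free case. Write $\mathcal{M}$ for the family of all graphs that are $2$-connected, non-cycles, claw-free, have no spanning $\varTheta$-subgraph, and are minimal with these four properties under the induced-subgraph order; this is exactly the family described explicitly by the paper's second main result. The elementary reduction invoked repeatedly is: if $G$ is \emph{any} $2$-connected non-cycle claw-free graph with no spanning $\varTheta$-subgraph, then among the induced subgraphs of $G$ that still have all four properties there is a minimal one, which therefore lies in $\mathcal{M}$; so $G$ contains some member of $\mathcal{M}$ as an induced subgraph. From the explicit description of $\mathcal{M}$ we extract two facts: \emph{(i)} every $M\in\mathcal{M}$ contains each of $B_{1,5}$, $B_{2,4}$, $N_{1,1,4}$ and $N_{1,2,3}$ as an induced subgraph; and \emph{(ii)} these four graphs are the maximal elements, up to $P_3$, among the connected graphs that are induced subgraphs of every member of $\mathcal{M}$ --- concretely, any connected graph $T$ of order at least $3$ with $T\neq P_3$ that is not an induced subgraph of one of the four is avoided by some member of $\mathcal{M}$.

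\smallskip

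\textbf{Sufficiency.} For the pair $R=K_{1,4}$, $S=P_4$: a connected $P_4$-free graph on at least two vertices is a join $G=G_1+G_2$; since $N(v)\supseteq V(G_2)$ for $v\in V(G_1)$ and symmetrically, the $K_{1,4}$-free hypothesis gives $\alpha(G_1),\alpha(G_2)\le 3$, hence $\alpha(G)=\max\{\alpha(G_1),\alpha(G_2)\}\le 3$. If $\kappa(G)\ge 3$, then $G$ is hamiltonian by the Chv\'atal--Erd\H{o}s theorem, and a hamiltonian non-cycle has a spanning $\varTheta$-subgraph. If $\kappa(G)=2$, the join structure forces one of $G_1,G_2$ to have at most two vertices, and the few resulting configurations are handled directly, with a short list of small orders done by hand. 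For $R=K_{1,3}$ and $S$ an induced subgraph of one of $B_{1,5},B_{2,4},N_{1,1,4},N_{1,2,3}$: let $G$ be a $2$-connected non-cycle $\{K_{1,3},S\}$-free graph and suppose it has no spanning $\varTheta$-subgraph. As $G$ is claw-free, the reduction above gives an induced $M\in\mathcal{M}$ in $G$; by \emph{(i)}, $M$ contains the one of the four graphs of which $S$ is an induced subgraph, so $M$, and hence $G$, contains an induced copy of $S$ --- contradicting $S$-freeness.

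\smallskip

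\textbf{Necessity.} Assume $\{R,S\}$ forces a spanning $\varTheta$-subgraph in every $2$-connected non-cycle graph. The graphs $K_{2,n}$ with $n\ge 4$ are $2$-connected non-cycles with no spanning $\varTheta$-subgraph, because the only admissible branch vertices are the two vertices of the small part and any three internally disjoint paths between them meet only three vertices of the large part. Their connected induced subgraphs of order at least $3$ are exactly the stars $K_{1,m}$ and the bicliques $K_{2,m}$ with $m\ge 2$, so by pigeonhole over $n$, one of $R,S$ --- say $S$ --- is an induced subgraph of $K_{2,n}$ for all large $n$; discarding $P_3$, this gives $S\in\{K_{1,m}:m\ge 3\}\cup\{K_{2,m}:m\ge 2\}$. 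By \emph{(ii)} with $T=C_4$, some $M_0\in\mathcal{M}$ has no induced $C_4$; being claw-free and $C_4$-free, $M_0$ contains no induced $K_{1,m}$ $(m\ge 3)$, no $K_{2,m}$ $(m\ge 2)$ and no $C_4$, so, since $M_0$ is an obstruction, $\{R,S\}$-freeness of $M_0$ forces $M_0$ to contain $R$; more generally every member of $\mathcal{M}$ is claw-free and hence cannot contain $S$ when $S=K_{1,m}$, so $R$ is an induced subgraph of every member of $\mathcal{M}$, and by \emph{(ii)} $R$ is an induced subgraph of one of the four graphs. It remains to pin down $S$, and $R$ when $S=K_{1,4}$: using that $K_{2,4}$ is $P_4$-free and $K_{1,m}$-free for every $m\ge 5$, one excludes $m\ge 5$ (then $K_{2,4}$ must contain $R$, forcing $R\in\{K_{1,3},K_{1,4},C_4,K_{2,3},K_{2,4}\}$, each already impossible --- $R\in\{K_{1,3},K_{1,4}\}$ would make $\{R,S\}$-freeness imply claw-freeness, contradicting $\mathcal{M}\neq\emptyset$, while $C_4,K_{2,3},K_{2,4}$ each contain an induced $C_4$ and so are not induced subgraphs of any of the four); the cases $S\in\{K_{2,m}:m\ge 2\}$ are excluded in the same spirit using $M_0$ and one more $C_4$-free obstruction. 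This leaves $S=K_{1,3}$, which is precisely the family ``$R=K_{1,3}$, $S$ an induced subgraph of one of the four graphs,'' or $S=K_{1,4}$, in which case a further obstruction that is $K_{1,4}$-free, not claw-free and $P_5$-free forces $R$ to be an induced subgraph of $P_4$; since $R\neq P_3$, this means $R=P_4$, the sporadic pair.

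\smallskip

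\textbf{Where the difficulty lies.} The decisive step --- and the technical heart of the paper --- is the classification of $\mathcal{M}$, from which facts \emph{(i)} and \emph{(ii)} are read off; once that is in hand, the $\{K_{1,4},P_4\}$ case is cograph structure plus Chv\'atal--Erd\H{o}s, the $\{K_{1,3},S\}$ cases are immediate, and necessity becomes a finite case analysis over the obstruction families above. The only other genuinely delicate point is exhibiting, in the necessity argument, the small stock of non-claw-free obstructions --- in particular a $2$-connected non-cycle graph with no spanning $\varTheta$-subgraph that is both $K_{1,4}$-free and $P_5$-free --- needed to isolate the sporadic pair $\{K_{1,4},P_4\}$ and to rule out the stars $K_{1,m}$ with $m\ge 5$.
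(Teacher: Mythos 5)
Your skeleton is essentially the paper's: sufficiency for $\{K_{1,3},S\}$ via the minimal-obstruction classification plus the observation that every obstruction family contains the four graphs; sufficiency for $\{K_{1,4},P_4\}$ via $\alpha(G)\le 3$ and Chv\'atal--Erd\H{o}s (you get $\alpha\le 3$ from the cograph join decomposition where the paper uses Egawa's theorem on $P_4$-free graphs --- a cosmetic difference); and necessity by intersecting the induced subgraphs of a stock of obstructions, starting from $K_{2,n}$ (which is the paper's $G_1$).

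There are, however, genuine gaps where you defer exactly the work that constitutes the proof. First, your fact \emph{(ii)} --- that any connected $T\neq P_3$ not sitting inside one of $B_{1,5},B_{2,4},N_{1,1,4},N_{1,2,3}$ is avoided by some obstruction --- is not something that can be ``read off'' from the classification: it is established only by exhibiting concrete graphs and checking avoidance, namely a claw-free, $C_4$-free obstruction whose induced paths have length at most $2$ between triangles at distance at least $4$ (the paper's $G_6$), and obstructions that are respectively $\{Z_6,N_{1,1,5},N_{2,2,2}\}$-free, $B_{3,3}$-free and $\{B_{2,5},N_{1,2,4}\}$-free ($G_7,G_8,G_9$). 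These six exclusions are precisely what pins down the four target graphs with their exact parameters, and none of them is produced in your argument. (A smaller point: Theorem~\ref{Th3} only asserts $\mathcal{M}\subseteq\mathcal{H}_1\cup\cdots\cup\mathcal{H}_7$, not equality, so witnesses for \emph{(ii)} should be taken as arbitrary obstructions rather than members of $\mathcal{M}$; this is harmless for necessity but your phrasing conflates the two.) Second, the non-claw-free obstructions you invoke to rule out $C_4\subseteq R$, the stars $K_{1,m}$ with $m\ge 5$, and $S=K_{2,m}$, and to isolate the pair $\{K_{1,4},P_4\}$ --- in particular a $K_{1,4}$-free, $P_5$-free obstruction --- are asserted to exist but never constructed (the paper's $G_2,G_4,G_5$). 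Third, fact \emph{(i)} is the content of the paper's final theorem and requires a case analysis over all seven families $\mathcal{H}_1,\dots,\mathcal{H}_7$, locating induced copies of $N_{1,2,5}$, $N_{1,1,5}$, $B_{2,5}$, etc.\ with lower bounds on the lengths of the pure links; it is a verification, but a substantial one. Finally, the $\kappa=2$ case of the $\{K_{1,4},P_4\}$ sufficiency is waved at (``handled directly, by hand''); the actual argument needs the observation that some component of $G-S$ is a clique (else $\alpha(G)\ge 4$), that the rest is $2$-connected with independence number at most $2$ and hence hamiltonian, and that gluing a Hamilton path through the clique component onto that cycle yields the spanning $\varTheta$-subgraph.
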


This paper is organized as follows: In Section 2, we will give the characterization of minimal graphs with property `to be 2-connected, non-cycle, claw-free and without spanning $\varTheta$-subgraphs'. In section 3, we give some lemmas that is useful in our proofs. In Section 4, we give the proof of Theorem~\ref{Th3}
(our second main result)
given in Section 2. In Section 5, we give the proof of Observation~\ref{Th1} and Theorem~\ref{Th2}.

%

\section{Minimal claw-free graphs without spanning $\varTheta$-subgraphs}

We say that a graph $G$ is \emph{minimal}  with respect to a property $\mathcal{P}$ if $G$ satisfies $\mathcal{P}$ and there exists no proper induced subgraphs of $G$ satisfying property $\mathcal{P}$. Brousek~\cite{B1} characterized all minimal graphs with respect to the property `to be 2-connected, claw-free and non-hamiltonian'.

Define the graph $P_{k_1,k_2,k_3}$, where $k_i\geq 2$, as the graph obtained by taking two vertex-disjoint triangles $a_1a_2a_3a_1$, $b_1b_2b_3b_1$, and by joining every pair of vertices $\{a_i,b_i\}$ ($i=1,2,3$) by a triangle $a_ib_ic_ia_i$ for $k_i=2$, or by a path $a_ic_i^1\ldots c_i^{k_i-2}b_i$ for $k_i\geq 3$.

\begin{theorem}[Brousek \cite{B1}]
Every $2$-connected non-hamiltonian claw-free graph contains an induced subgraph isomorphic to $P_{k_1,k_2,k_3}$, for some $k_i\geq 2$, $i=1,2,3$.
\end{theorem}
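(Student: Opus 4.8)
The plan is to argue directly from a longest cycle, producing the required induced subgraph with no appeal to minimality. Let $G$ be $2$-connected, claw-free and non-hamiltonian, and let $C$ be a cycle of $G$ of maximum length; since $G$ is non-hamiltonian, $V(G)\setminus V(C)\neq\emptyset$ (and $|V(C)|\geq 4$, since the only $2$-connected graph all of whose cycles are triangles is $K_3$). Fix a component $D$ of $G-C$ and put $A=N_{V(C)}(D)$; as $G$ is $2$-connected, $|A|\geq 2$. Among all pairs of distinct vertices of $A$ and all paths through $D$ joining them, choose one such path $P_D$, with endpoints $u,w\in A$, of smallest length (so $P_D=uzw$ if $D=\{z\}$). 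Write $u^-,u^+$ and $w^-,w^+$ for the two $C$-neighbours of $u$ and of $w$.

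First I would record the local structure forced by the maximality of $C$ together with claw-freeness. Maximality forbids ``shortcuts'': if $Q$ is a $u'$--$w'$ path with $u',w'\in V(C)$ and interior in $D$, and $Q_1,Q_2$ are the two $u'$--$w'$ arcs of $C$, then $Q_i\cup Q$ is a cycle, so $|Q|\leq\min\{|Q_1|,|Q_2|\}$; in particular no vertex of $D$ is adjacent to two consecutive vertices of $C$, hence for $u\in A$ with a neighbour $z\in D$ the vertices $u^-,u^+,z$ are distinct and $u^-z,u^+z\notin E(G)$. Applying claw-freeness at $u$ to $\{u^-,u^+,z\}\subseteq N_G(u)$ then forces the ``skip chord'' $u^-u^+\in E(G)$, so each $u\in A$ lies in a triangle $T_u=\langle\{u^-,u,u^+\}\rangle_G\cong K_3$. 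The same shortcut bound, applied with $Q=P_D$, shows that the two arcs of $C$ between $u$ and $w$ both have length at least $|P_D|\geq 2$.

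Next I would assemble the obstruction. The two triangles are $T_u$ and $T_w$; the three internally disjoint paths joining them are $P_D$ (from $u$ to $w$), the arc of $C$ from $u^+$ to $w^-$, and the arc of $C$ from $w^+$ to $u^-$. Provided the only edges of $G$ running among these vertices are the ``expected'' ones --- the edges of the two arcs and of $P_D$, the two skip chords $u^-u^+,w^-w^+$, and possibly one chord joining the two ends of one of the three paths (namely $uw$, $u^+w^-$, or $w^+u^-$) --- the subgraph induced on their union is exactly a $P_{k_1,k_2,k_3}$: the $i$-th path gives a connector with $k_i-2$ interior vertices, and the ``triangle connector'' case $k_i=2$ arises precisely when that path has a single interior vertex and its two ends carry one of the allowed chords (this is how the $P_{2,2,2}$-type examples occur). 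The shape is no accident: for $k_i\geq 3$, $P_{k_1,k_2,k_3}$ is the line graph of the $\varTheta$-graph whose three internally disjoint paths have lengths $k_1,k_2,k_3$, so any non-hamiltonicity obstruction in a claw-free graph is bound to look like ``two triangles joined by three paths''.

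The heart of the argument --- and the step I expect to be hardest --- is justifying the proviso that no unexpected edge appears among the chosen vertices. One must rule out a chord between two interior vertices of an arc, a chord from an interior vertex of an arc to a triangle, a chord between $P_D$ and $C$, a second chord between the two triangles, an ``end-chord'' $u^+w^-$ meeting an arc with more than one interior vertex, and so on; each of these is eliminated by a short argument that either produces a claw (violating claw-freeness) or produces a cycle longer than $C$ (violating maximality), in some cases after re-selecting the data --- replacing $u$ or $w$ by a nearer vertex of $A$, or $D$ by another component, or $P_D$ by a shorter $D$-path --- so the extremal ordering ``$C$ longest, then $P_D$ shortest, then $u,w$ chosen to make the arcs between $T_u$ and $T_w$ as short as possible'' must be set up so that all such reductions terminate. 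Alongside this, a finite list of degenerate configurations --- $T_u$ and $T_w$ sharing a vertex (arcs between $u$ and $w$ of length exactly $2$), a connector reduced to a single edge, $|A|=2$ versus $|A|\geq 3$, $D$ trivial versus non-trivial --- is treated by hand, in each case exhibiting a (possibly smaller) $P_{k_1',k_2',k_3'}$ explicitly.
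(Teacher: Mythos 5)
This statement is Brousek's theorem, which the paper only cites from \cite{B1} and does not prove, so there is no internal proof to compare against; judging your proposal on its own, it is a setup plus a deferral of the actual content, and the deferred part contains a claim that is false as stated. The opening is fine and standard: for a longest cycle $C$, a component $D$ of $G-C$ with attachment set $A$, no vertex of $D$ sees two consecutive vertices of $C$, and claw-freeness then forces the skip chords $u^-u^+$, $w^-w^+$, giving the two triangles. The gap is the "proviso" paragraph. You assert that every unexpected edge among the chosen vertices "is eliminated by a short argument that either produces a claw or produces a cycle longer than $C$." For chords between interior vertices of an arc of $C$ this is simply not true: take $P_{5,5,5}$ and add the chord $c_1^1c_1^3$ inside one connector. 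The resulting graph is still $2$-connected, claw-free and non-hamiltonian, a longest cycle $C$ is unchanged, your construction selects exactly the same vertex set as in $P_{5,5,5}$, and the induced subgraph on that set now contains the extra chord and is not any $P_{k_1,k_2,k_3}$ --- yet no claw and no longer cycle exists. The theorem of course still holds for this graph (one passes to the induced path $a_1c_1^1c_1^3b_1$ and gets $P_{4,5,5}$), but that repair forces you to abandon the arcs of $C$ in favour of induced paths, at which point all the disjointness and attachment properties you are relying on (the induced path may acquire edges to $u$, $w$, $P_D$, or the opposite arc, e.g.\ via crossing chords of $C$) have to be re-established from scratch. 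That re-establishment is the entire difficulty of Brousek's theorem, and it is precisely the part your proposal does not carry out.

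There are also degenerate configurations that your construction cannot produce a witness from at all, not merely "by hand" adjustments: if one $u$--$w$ arc has length exactly $2$ then $u^+=w^-$ and the two triangles share a vertex; if it has length exactly $3$ then the corresponding connector degenerates to a bare edge $u^+w^-$, which is not a legal connector of $P_{k_1,k_2,k_3}$ (the $k_i=2$ case requires a triangle, i.e.\ an additional common neighbour that nothing in your setup supplies). In these cases the induced $P_{k_1,k_2,k_3}$ guaranteed by the theorem need not live on your chosen vertex set at all, so "exhibiting a (possibly smaller) $P_{k_1',k_2',k_3'}$ explicitly" is not a finite bookkeeping step but a genuinely new argument. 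In short: the extremal framework is reasonable and the triangle-producing step is correct, but the heart of the proof --- obtaining an \emph{induced} copy --- is missing, and the mechanism you propose for it does not work.
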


Motivated by this and in order to prove Theorem~\ref{Th2}, we characterize all minimal graphs with respect to the property `to be 2-connected, non-cycle, claw-free and without spanning $\varTheta$-subgraphs'. In the following, we first give some basic definitions of minimal graphs.

\subsection{Links and chains}

By a \emph{link} we mean a graph with two labeled vertices. If $L$ is a link with labeled vertices $x$ and $y$, then $x$ is the origin, $y$ is the terminus of the link, and $x,y$ are the end-vertices of the link. We also say $L$ is a link from $x$ to $y$, or connecting $x$ and $y$. For convenient, we will denote a link $L$ from $x$ to $y$ by $L(x,y)$. If $L(x,y)$ is a triangle, or a path connecting $x,y$ of length at least 2, then we call it a \emph{pure link}.

We define two classes of links $\mathcal{L}_1$, $\mathcal{L}_2$, as follows, see Figure \ref{L12}. We remark that in all figures of the paper, a red dotted line stands for a pure link. By a \emph{clique} we mean a complete subgraph. Let
\begin{itemize}
\item $\mathcal{L}_1(x,y)$ be the family of links from $x$ to $y$ consisting of a 5-clique $\langle \{x,y,a,b,c\}\rangle$ and a triangle $\langle \{a',b',c'\}\rangle$, and three pure links from $a$ to $a'$, from $b$ to $b'$, and from $c$ to $c'$, respectively;
\item $\mathcal{L}_2(x,y)$ be the family of links from $x$ to $y$ consisting of two triangles $\langle \{x,a,b\}\rangle$ and $\langle \{y,a',b'\}\rangle$, and two pure links from $a$ to $a'$, and from $b$ to $b'$, respectively.
\end{itemize}

\begin{figure}[h]
\centering
\begin{tikzpicture}[scale=0.08]

\begin{scope}
\draw[fill=black] (-60,20){coordinate (a')} circle (0.8); \node[right] at (-59,18) {$a'$};
\draw[fill=black] (-60,-20){coordinate (b')} circle (0.8); \node[right] at (-59,-18) {$b'$};
\draw[fill=black] (-50,0){coordinate (c')} circle (0.8); \node[right] at (-51,3) {$c'$}; 
\draw[fill=black] (-20,20){coordinate (a)} circle (0.8); \node[left] at (-21,18) {$a$};
\draw[fill=black] (-20,-20){coordinate (b)} circle (0.8); \node[left] at (-21,-18) {$b$};
\draw[fill=black] (-30,0){coordinate (c)} circle (0.8); \node[left] at (-29,3) {$c$};
\draw[fill=black] (0,15){coordinate (x)} {node[right] {$x$}} circle (0.8);
\draw[fill=black] (0,-15){coordinate (y)} {node[right] {$y$}} circle (0.8);
\draw[thick] (a')--(b')--(c')--(a');
\draw[thick] (x)--(y)--(a)--(b)--(c)--(x)--(a)--(c)--(y)--(b)--(x);
\draw[red,dotted,thick] (a)--(a') (b)--(b') (c)--(c'); 
\node[below] at (-30,-22) {$\mathcal{L}_1(x,y)$};
\end{scope}

\begin{scope}[xshift=60cm]
\draw[fill=black] (-40,20){coordinate (a)} circle (0.8); \node[right] at (-41,16) {$a$};
\draw[fill=black] (-40,-20){coordinate (a')} circle (0.8); \node[right] at (-41,-16) {$a'$};
\draw[fill=black] (-20,10){coordinate (b)} circle (0.8); \node[right] at (-20,8) {$b$};
\draw[fill=black] (-20,-10){coordinate (b')} circle (0.8); \node[right] at (-20,-8) {$b'$};
\draw[fill=black] (-0,15){coordinate (x)} {node[right] {$x$}} circle (0.8);
\draw[fill=black] (-0,-15){coordinate (y)} {node[right] {$y$}} circle (0.8);
\draw[thick] (x)--(a)--(b)--(x);
\draw[thick] (y)--(a')--(b')--(y);
\draw[red,dotted,thick] (a)--(a') (b)--(b'); 
\node[below] at (-20,-22) {$\mathcal{L}_2(x,y)$};
\end{scope}

\end{tikzpicture}
\caption{Classes of links $\mathcal{L}_1(x,y)$ and $\mathcal{L}_2(x,y)$.}\label{L12}
\end{figure}
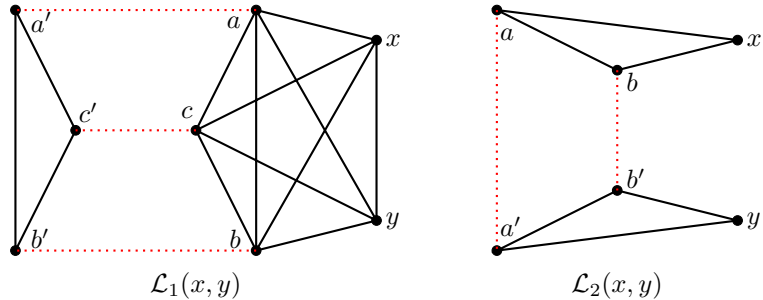

We now define a chain, with two labeled pairs of vertices, as follows: Let $\{x_1,y_1\}$, $\{x_2,y_2\}$ be two pair of vertices, possibly $\{x_1,y_1\}\cap\{x_2,y_2\}\neq\emptyset$. A \emph{bipath chain} from $\{x_1,y_1\}$ to $\{x_2,y_2\}$, is the union of two vertex-disjoint paths from $\{x_1,y_1\}$ to $\{x_2,y_2\}$; and a \emph{triangle chain} from $\{x_1,y_1\}$ to $\{x_2,y_2\}$, is a graph with vertex set $\{a_1,a_2,\ldots,a_k\}$, $k\geq 3$, where $\{x_1,y_1\}=\{a_1,a_2\}$ and $\{x_2,y_2\}=\{a_{k-1},a_k\}$, such that $a_i,a_j$ are adjacent if and only if $j=i+1$ or $j=i+2$. Both bipath chains and triangle chains are called \emph{pure chains}. If a bipath chain consists of two trivial paths, i.e., $\{x_1,y_1\}=\{x_2,y_2\}$, then we shall call it a \emph{trivial chain}. We will denote by $H(\{x_1,y_1\},\{x_2,y_2\})$ the pure chain $H$ from $\{x_1,y_1\}$ to $\{x_2,y_2\}$.

We further define a \emph{chain} $H$ as the graph with labeled pairs of vertices $\{x_1,y_1\}$ and $\{x_2,y_2\}$, that obtained from a series of vertex-disjoint pure chains $H_i=H_i(\{a_i,b_i\},\{c_i,d_i\})$, $i=1,\ldots,k$, where $\{x_1,y_1\}=\{a_1,b_1\}$ and $\{x_2,y_2\}=\{c_k,d_k\}$, by adding all missing edges between vertices in $\{c_i,d_i,a_{i+1},b_{i+1}\}$ for $i=1,\ldots,k-1$ (making $\langle \{c_i,d_i,a_{i+1},$ $b_{i+1}\}\rangle$ to be a 4-clique) (see Figure \ref{FiChain}). The pair $\{x_1,y_1\}$ is the origin of $H$ and $\{x_2,y_2\}$ is the terminus of $H$. We also denote the chain by $H(\{x_1,y_1\},\{x_2,y_2\})$, and call it a chain from $\{x_1,y_1\}$ to $\{x_2,y_2\}$, or connecting $\{x_1,y_1\}$ and $\{x_2,y_2\}$. If both $H_1,H_k$ are bipath chains, then $H$ is of type BB (including the case that $k=1$ and $H=H_1$ is a bipath chain); if $H_1$ is a triangle chain and $H_k$ is a bipath chain, then $H$ is of type TB. The types BT and TT are similar.

\begin{figure}[h]
\begin{center}
\begin{tikzpicture}[scale=0.1]

\begin{scope}
\foreach \x in {1,2,3,4} \draw[fill=black] (\x*10-10,0) {coordinate (a1\x)} circle (0.6);
\foreach \x in {1,2} \draw[fill=black] (\x*10,15) {coordinate (a2\x)} circle (0.6);
\draw[thick] (a11)--(a14) (a21)--(a22);
\draw[dotted, thick] (-2,-2) rectangle (32,17);
\node[below] at (15,-3) {$H_1$};
\end{scope}

\begin{scope}[xshift=40cm]
\foreach \x in {1,2,3} \draw[fill=black] (\x*10-10,0) {coordinate (b1\x)} circle (0.6);
\foreach \x in {1,2,3} \draw[fill=black] (\x*10-10,15) {coordinate (b2\x)} circle (0.6);
\draw[thick] (b11)--(b13) (b21)--(b23);
\draw[thick] (b21)--(b11)--(b22)--(b12)--(b23)--(b13);
\draw[dotted, thick] (-2,-2) rectangle (22,17);
\node[below] at (10,-3) {$H_2$};
\end{scope}

\begin{scope}[xshift=70cm]
\draw[fill=black] (0,0) {coordinate (c11)} circle (0.6);
\draw[fill=black] (0,15) {coordinate (c21)} circle (0.6);
\draw[dotted, thick] (-2,-2) rectangle (2,17);
\node[below] at (0,-3) {$H_3$};
\end{scope}

\begin{scope}[xshift=80cm]
\draw[fill=black] (5,0) {coordinate (d11)} circle (0.6);
\foreach \x in {1,2} \draw[fill=black] (\x*10-10,15) {coordinate (d2\x)} circle (0.6);
\draw[thick] (d21)--(d22);
\draw[dotted, thick] (-2,-2) rectangle (12,17);
\node[below] at (5,-3) {$H_4$};
\end{scope}

\begin{scope}[xshift=100cm]
\foreach \x in {1,2,3} \draw[fill=black] (\x*10-10,0) {coordinate (e1\x)} circle (0.6);
\foreach \x in {1,2} \draw[fill=black] (\x*10-5,15) {coordinate (e2\x)} circle (0.6);
\draw[thick] (e11)--(e13) (e21)--(e22);
\draw[thick] (e11)--(e21)--(e12)--(e22)--(e13);
\draw[dotted, thick] (-2,-2) rectangle (22,17);
\node[below] at (10,-3) {$H_5$};
\end{scope}

\begin{scope}[yshift=-25cm]
\foreach \x in {1,2,3,4} \draw[fill=black] (\x*10-10,0) {coordinate (a1\x)} circle (0.6);
\foreach \x in {1,2} \draw[fill=black] (\x*10,15) {coordinate (a2\x)} circle (0.6);
\draw[thick] (a11)--(a14) (a21)--(a22); 
\foreach \x in {1,2,3} \draw[fill=black] (\x*10+30,0) {coordinate (b1\x)} circle (0.6);
\foreach \x in {1,2,3} \draw[fill=black] (\x*10+30,15) {coordinate (b2\x)} circle (0.6);
\draw[thick] (b11)--(b13) (b21)--(b23);
\draw[thick] (b21)--(b11)--(b22)--(b12)--(b23)--(b13); 
\draw[fill=black] (70,0) {coordinate (c11)} circle (0.6);
\draw[fill=black] (70,15) {coordinate (c21)} circle (0.6); 
\draw[fill=black] (85,0) {coordinate (d11)} circle (0.6);
\foreach \x in {1,2} \draw[fill=black] (\x*10+70,15) {coordinate (d2\x)} circle (0.6);
\draw[thick] (d21)--(d22); 
\foreach \x in {1,2,3} \draw[fill=black] (\x*10+90,0) {coordinate (e1\x)} circle (0.6);
\foreach \x in {1,2} \draw[fill=black] (\x*10+95,15) {coordinate (e2\x)} circle (0.6);
\draw[thick] (e11)--(e13) (e21)--(e22);
\draw[thick] (e11)--(e21)--(e12)--(e22)--(e13); 
\draw[thick] (a14)--(a22) (a14)--(b11)--(a22)--(b21)--(a14);
\draw[thick] (b13)--(c11)--(b23)--(c21)--(b13) (c11)--(c21);
\draw[thick] (c11)--(d11)--(c21)--(d21)--(c11) (d11)--(d21);
\draw[thick] (d11)--(d22) (d11)--(e11)--(d22)--(e21)--(d11);
\node[left] at (a21) {$x_1$}; \node[left] at (a11) {$y_1$};
\node[right] at (e22) {$x_2$}; \node[right] at (e13) {$y_2$};
\node[below] at (60,-2) {$H(\{x_1,y_1\},\{x_2,y_2\})$};
\end{scope}

\end{tikzpicture}
\caption{A chain from $\{x_1,y_1\}$ to $\{x_2,y_2\}$.}\label{FiChain}
\end{center}
\end{figure}
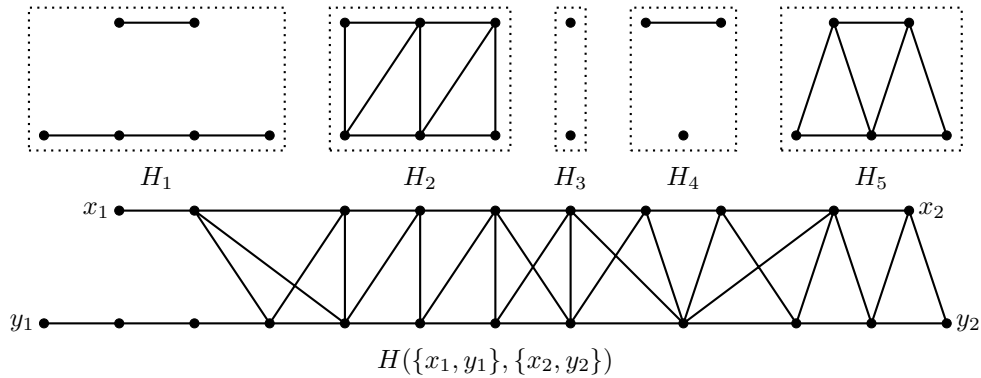

\subsection{Characterization of minimal graphs}

We define seven classes of graphs $\mathcal{H}_i$, $i=1,\ldots,7$, as shown in Figures \ref{H1234} and \ref{H567}. We remark that in Figure \ref{H567}, a red $\boxtimes$ stands for a chain $H$ from $\{x_1,y_1\}$ to $\{x_2,y_2\}$. Let
\begin{itemize}
\item $\mathcal{H}_1$ be the family of graphs obtained from four triangles $\langle \{x_1,y_1,z_1\}\rangle$, $\langle \{x_2,y_2,z_2\}\rangle$, $\langle \{x_3,y_3,z_3\}\rangle$, $\langle \{x'_1,y'_2,z'_3\}\rangle$, by connecting each of the following pairs of vertices, $\{x_1,x'_1\}$, $\{y_2,y'_2\}$, $\{z_3,z'_3\}$, $\{z_1,z_2\}$, $\{x_2,x_3\}$, $\{y_3,y_1\}$, by a pure link;
\item $\mathcal{H}_2$ be the family of graphs obtained from three triangles $\langle \{x_1,y_1,z_1\}\rangle$, $\langle \{x_2,y_2,z_2\}\rangle$, $\langle \{x_3,y_3,z_3\}\rangle$, and a 5-clique $\langle \{x'_1,x'_2,y'_2,x'_3,z'_3\}\rangle$, by connecting each of the following pairs of vertices, $\{x_1,x'_1\}$, $\{x_2,x'_2\}$, $\{y_2,y'_2\}$, $\{x_3,x'_3\}$, $\{z_3,z'_3\}$, $\{z_1,z_2\}$, $\{y_3,y_1\}$, by a pure link;
\item $\mathcal{H}_3$ be the family of graphs obtained from three triangles $\langle \{x_1,y_1,z_1\}\rangle$, $\langle \{x_2,y_2,z_2\}\rangle$, $\langle \{x_3,y_3,z_3\}\rangle$, and a 7-clique $\langle \{x'_1,y'_1,z'_1,y'_2,z'_2,y'_3,z'_3\}\rangle$, by connecting each of the following pairs of vertices, $\{x_1,x'_1\}$, $\{y_1,y'_1\}$, $\{z_1,z'_1\}$, $\{y_2,y'_2\}$, $\{z_2,z'_2\}$, $\{y_3,y'_3\}$, $\{z_3,z'_3\}$, $\{x_2,x_3\}$ by a pure link;
\item $\mathcal{H}_4$ be the family of graphs obtained from three triangles $\langle \{x_1,y_1,z_1\}\rangle$, $\langle \{x_2,y_2,z_2\}\rangle$, $\langle \{x_3,y_3,z_3\}\rangle$, and a 9-clique $\langle \{x'_1,y'_1,z'_1,x'_2,y'_2,z'_2,x'_3,y'_3,z'_3\}\rangle$, by connecting each of the following pairs of vertices, $\{x_i,x'_i\}$, $\{y_i,y'_i\}$, $\{z_i,z'_i\}$, $i=1,2,3$, by a pure link.
\item $\mathcal{H}_5$ be the family of graphs consisting of
two links $L_1=L_1(x_1,y_1)\in
\mathcal{L}_1(x_1,y_1)$,
$L_2=L_2(x_2,y_2)\in
\mathcal{L}_1(x_2,y_2)$
and a chain
$H=H(\{x_1,y_1\},\{x_2,y_2\})$, such that $L_1,L_2$ and $H$ are internally-disjoint;
\item $\mathcal{H}_6$ be the family of graphs 
consisting of
two links $L_1=L_1(x_1,y_1)\in
\mathcal{L}_1(x_1,y_1)$,
$L_2=L_2(x_2,y_2)\in
\mathcal{L}_2(x_2,y_2)$ and a chain $H=H(\{x_1,y_1\},\{x_2,y_2\})$ of type BB or TB,
such that $L_1,L_2$ and $H$ are internally-disjoint;

\item $\mathcal{H}_7$ 
be the family of graphs 
consisting of
two links $L_1=L_1(x_1,y_1)\in
\mathcal{L}_2(x_1,y_1)$,
$L_2=L_2(x_2,y_2)\in
\mathcal{L}_2(x_2,y_2)$ and a chain $H=H(\{x_1,y_1\},\{x_2,y_2\})$ of type BB,
such that $L_1,L_2$ and $H$ are internally-disjoint.
\end{itemize}

\begin{figure}[h]
\centering
\begin{tikzpicture}[scale=0.08]

\begin{scope}
\foreach \x/\y in {1/x,2/y,3/z} 
\draw[fill=black] (210-\x*120:12) {coordinate (\y'\x)} circle (0.8); 
\node at (74:13) {$x'_1$}; \node at (314:13) {$y'_2$}; \node at (226:13) {$z'_3$};
\draw[thick] (x'1)--(y'2)--(z'3)--(x'1); 
\draw[fill=black] (90:27) {coordinate (x1)} circle (0.8); \node at (82:26) {$x_1$};
\draw[fill=black] (110:36) {coordinate (y1)} circle (0.8); \node at (114:37) {$y_1$};
\draw[fill=black] (70:36) {coordinate (z1)} circle (0.8); \node at (66:37) {$z_1$}; 
\draw[thick] (x1)--(y1)--(z1)--(x1); 
\draw[fill=black] (310:36) {coordinate (x2)} circle (0.8); \node at (314:39) {$x_2$};
\draw[fill=black] (330:27) {coordinate (y2)} {node[above] {$y_2$}} circle (0.8);
\draw[fill=black] (350:36) {coordinate (z2)} circle (0.8); \node at (346:39) {$z_2$}; 
\draw[thick] (x2)--(y2)--(z2)--(x2); 
\draw[fill=black] (230:36) {coordinate (x3)} circle (0.8); \node at (226:39) {$x_3$};
\draw[fill=black] (190:36) {coordinate (y3)} circle (0.8); \node at (194:39) {$y_3$};
\draw[fill=black] (210:27) {coordinate (z3)} {node[above] {$z_3$}} circle (0.8); 
\draw[thick] (x3)--(y3)--(z3)--(x3); 
\draw[red, dotted, thick] (x1)--(x'1) (y2)--(y'2) (z3)--(z'3);
\draw[red, dotted, thick] (x2)--(x3) (y3)--(y1) (z1)--(z2);
\node[below] at (0,-30) {$\mathcal{H}_1$};
\end{scope}

\begin{scope}[xshift=90cm]
\foreach \x in {1,2,...,5} 
\draw[fill=black] (162-\x*72:14){coordinate (a\x)} circle (0.8); 
\node at (75:15) {$x'_1$}; 
\node[below] at (a3) {$x'_2$}; \node[below] at (a4) {$x'_3$};
\node[right] at (a2) {$y'_2$}; \node[left] at (a5) {$z'_3$};
\draw[thick] (a1)--(a2)--(a3)--(a4)--(a5)--(a1)--(a3)--(a5)--(a2)--(a4)--(a1); 
\draw[fill=black] (90:27) {coordinate (x1)} circle (0.8); \node at (82:26) {$x_1$};
\draw[fill=black] (110:36) {coordinate (y1)} circle (0.8); \node at (114:37) {$y_1$};
\draw[fill=black] (70:36) {coordinate (z1)} circle (0.8); \node at (66:37) {$z_1$}; 
\draw[thick] (x1)--(y1)--(z1)--(x1); 
\draw[fill=black] (310:36) {coordinate (x2)} circle (0.8); \node at (314:39) {$x_2$};
\draw[fill=black] (330:27) {coordinate (y2)} {node[above] {$y_2$}} circle (0.8);
\draw[fill=black] (350:36) {coordinate (z2)} circle (0.8); \node at (346:39) {$z_2$}; 
\draw[thick] (x2)--(y2)--(z2)--(x2); 
\draw[fill=black] (230:36) {coordinate (x3)} circle (0.8); \node at (226:39) {$x_3$};
\draw[fill=black] (190:36) {coordinate (y3)} circle (0.8); \node at (194:39) {$y_3$};
\draw[fill=black] (210:27) {coordinate (z3)} {node[above] {$z_3$}} circle (0.8); 
\draw[thick] (x3)--(y3)--(z3)--(x3); 
\draw[red, dotted, thick] (x1)--(a1) (y2)--(a2) (x2)--(a3) (x3)--(a4) (z3)--(a5);
\draw[red, dotted, thick] (y3)--(y1) (z1)--(z2);
\node[below] at (0,-30) {$\mathcal{H}_2$};
\end{scope}

\begin{scope}[yshift=-75cm]
\foreach \x in {1,2,...,7} 
{\draw[fill=black] (141.4-\x*51.4:16){coordinate (a\x)} circle (0.8); 
\coordinate (b\x) at (141.4-\x*51.4:12);}
\draw(0,0) {node {$K_7$}} circle (18);
\node at (b1) {$x'_1$}; \node at (b2) {$z'_1$}; \node at (b3) {$z'_2$}; \node at (b4) {$y'_2$};
\node at (b5) {$z'_3$}; \node at (b6) {$y'_3$}; \node at (b7) {$y'_1$};
\draw[fill=black] (90:27) {coordinate (x1)} circle (0.8); \node at (82:26) {$x_1$};
\draw[fill=black] (110:36) {coordinate (y1)} circle (0.8); \node at (114:37) {$y_1$};
\draw[fill=black] (70:36) {coordinate (z1)} circle (0.8); \node at (66:37) {$z_1$}; 
\draw[thick] (x1)--(y1)--(z1)--(x1); 
\draw[fill=black] (310:36) {coordinate (x2)} circle (0.8); \node at (314:39) {$x_2$};
\draw[fill=black] (330:27) {coordinate (y2)} {node[above] {$y_2$}} circle (0.8);
\draw[fill=black] (350:36) {coordinate (z2)} circle (0.8); \node at (346:39) {$z_2$}; 
\draw[thick] (x2)--(y2)--(z2)--(x2); 
\draw[fill=black] (230:36) {coordinate (x3)} circle (0.8); \node at (226:39) {$x_3$};
\draw[fill=black] (190:36) {coordinate (y3)} circle (0.8); \node at (194:39) {$y_3$};
\draw[fill=black] (210:27) {coordinate (z3)} {node[above] {$z_3$}} circle (0.8); 
\draw[thick] (x3)--(y3)--(z3)--(x3); 
\draw[red, dotted, thick] (x1)--(a1) (z1)--(a2) (z2)--(a3) (y2)--(a4) (z3)--(a5) (y3)--(a6) (y1)--(a7);
\draw[red, dotted, thick] (x2)--(x3);
\node[below] at (0,-30) {$\mathcal{H}_3$};
\end{scope}

\begin{scope}[xshift=90cm,yshift=-75cm]
\foreach \x in {1,2,...,9} 
{\draw[fill=black] (130-\x*40:18){coordinate (a\x)} circle (0.8); 
\coordinate (b\x) at (130-\x*40:14);}
\draw(0,0) {node {$K_9$}} circle (20);
\node at (b1) {$x'_1$}; \node at (b2) {$z'_1$}; \node at (b3) {$z'_2$}; 
\node at (b4) {$y'_2$}; \node at (b5) {$x'_2$}; \node at (b6) {$x'_3$}; 
\node at (b7) {$z'_3$}; \node at (b8) {$y'_3$}; \node at (b9) {$y'_1$};
\draw[fill=black] (90:27) {coordinate (x1)} circle (0.8); \node at (82:26) {$x_1$};
\draw[fill=black] (110:36) {coordinate (y1)} circle (0.8); \node at (114:37) {$y_1$};
\draw[fill=black] (70:36) {coordinate (z1)} circle (0.8); \node at (66:37) {$z_1$}; 
\draw[thick] (x1)--(y1)--(z1)--(x1); 
\draw[fill=black] (310:36) {coordinate (x2)} circle (0.8); \node at (314:39) {$x_2$};
\draw[fill=black] (330:27) {coordinate (y2)} {node[above] {$y_2$}} circle (0.8);
\draw[fill=black] (350:36) {coordinate (z2)} circle (0.8); \node at (346:39) {$z_2$}; 
\draw[thick] (x2)--(y2)--(z2)--(x2); 
\draw[fill=black] (230:36) {coordinate (x3)} circle (0.8); \node at (226:39) {$x_3$};
\draw[fill=black] (190:36) {coordinate (y3)} circle (0.8); \node at (194:39) {$y_3$};
\draw[fill=black] (210:27) {coordinate (z3)} {node[above] {$z_3$}} circle (0.8); 
\draw[thick] (x3)--(y3)--(z3)--(x3); 
\draw[red, dotted, thick] (x1)--(a1) (z1)--(a2) (z2)--(a3) (y2)--(a4) (x2)--(a5); 
\draw[red, dotted, thick] (x3)--(a6) (z3)--(a7) (y3)--(a8) (y1)--(a9);
\node[below] at (0,-30) {$\mathcal{H}_4$};
\end{scope}

\end{tikzpicture}
\caption{Classes of graphs $\mathcal{H}_1, \mathcal{H}_2, \mathcal{H}_3, \mathcal{H}_4$.}\label{H1234}
\end{figure}
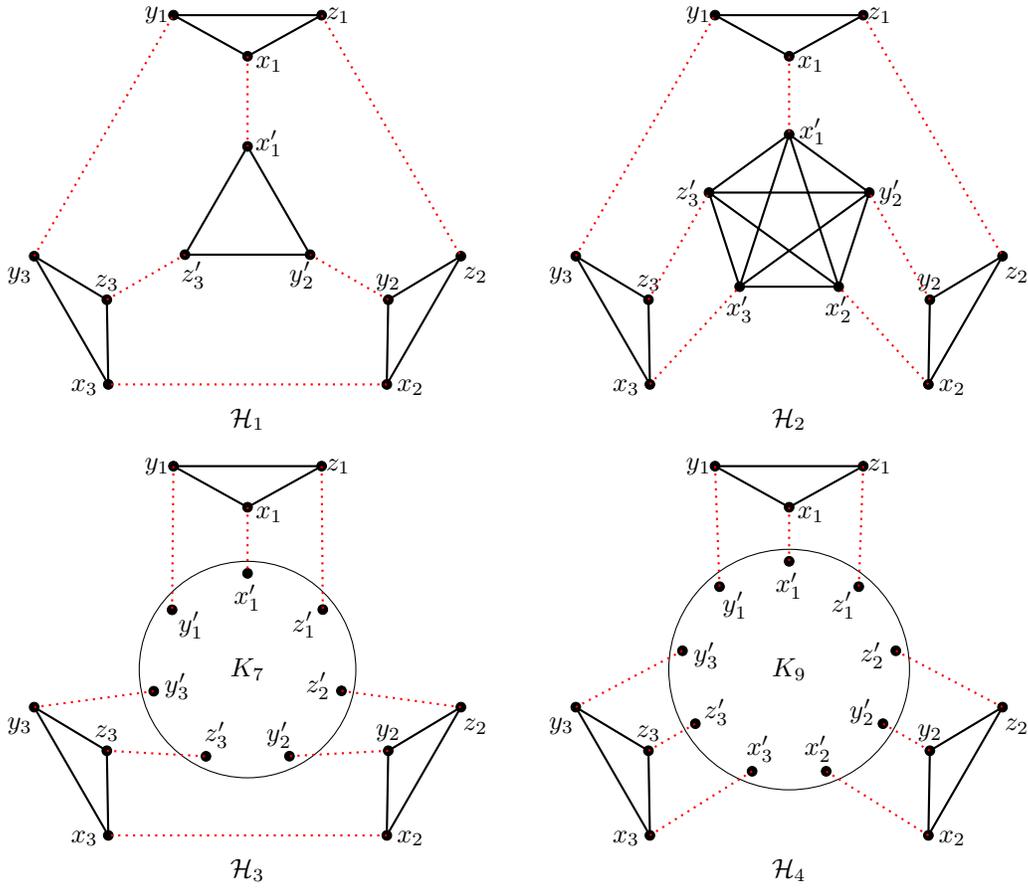

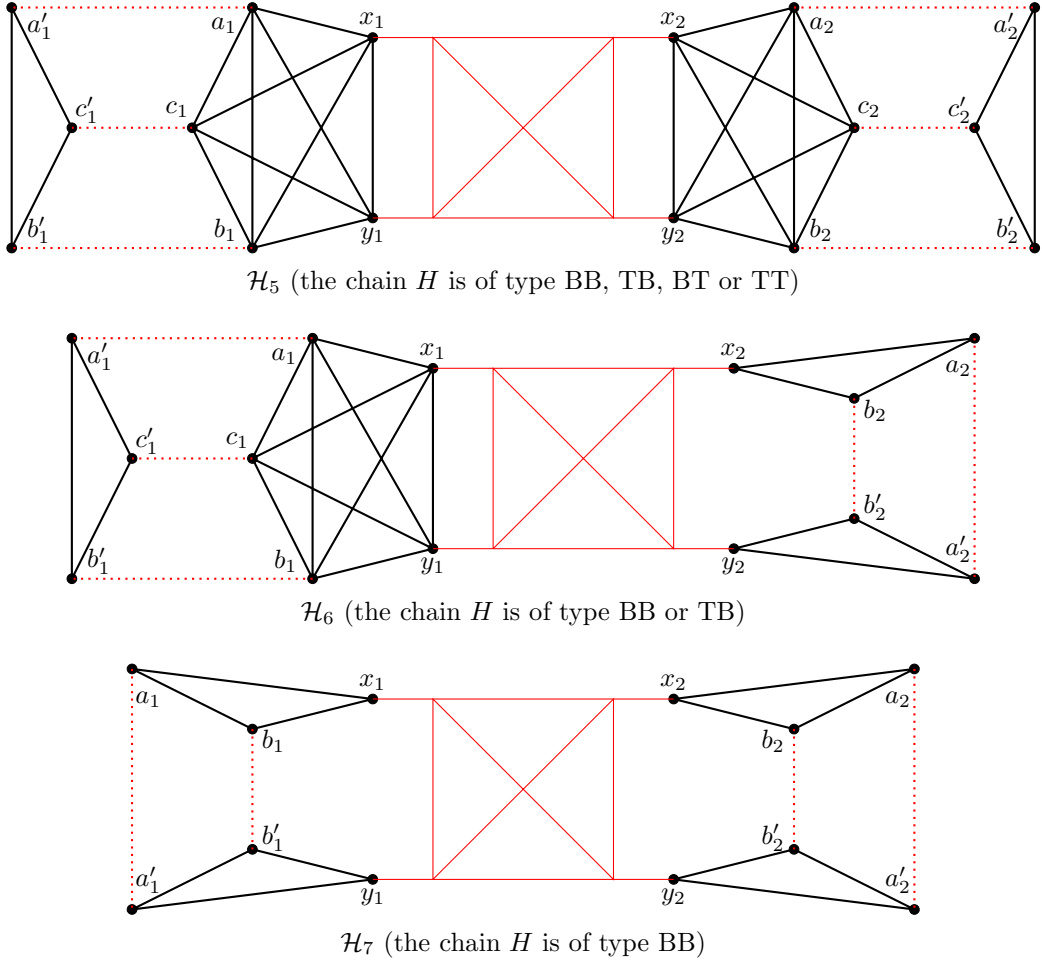
\begin{figure}[h]
\centering
\begin{tikzpicture}[scale=0.08]

\begin{scope}
\draw[fill=black] (-85,20){coordinate (a'1)} circle (0.8); \node[right] at (-84,17) {$a'_1$};
\draw[fill=black] (-85,-20){coordinate (b'1)} circle (0.8); \node[right] at (-84,-17) {$b'_1$};
\draw[fill=black] (-75,0){coordinate (c'1)} circle (0.8); \node[right] at (-76,3) {$c'_1$}; 
\draw[fill=black] (-45,20){coordinate (a1)} circle (0.8); \node[left] at (-46,17) {$a_1$};
\draw[fill=black] (-45,-20){coordinate (b1)} circle (0.8); \node[left] at (-46,-17) {$b_1$};
\draw[fill=black] (-55,0){coordinate (c1)} circle (0.8); \node[left] at (-54,3) {$c_1$};
\draw[fill=black] (-25,15){coordinate (x1)} {node[above] {$x_1$}} circle (0.8);
\draw[fill=black] (-25,-15){coordinate (y1)} {node[below] {$y_1$}} circle (0.8);
\draw[thick] (a'1)--(b'1)--(c'1)--(a'1);
\draw[thick] (x1)--(y1)--(a1)--(b1)--(c1)--(x1)--(a1)--(c1)--(y1)--(b1)--(x1);
\draw[red,dotted,thick] (a1)--(a'1) (b1)--(b'1) (c1)--(c'1); 
\draw[fill=black] (85,20){coordinate (a'2)} circle (0.8); \node[left] at (84,17) {$a'_2$};
\draw[fill=black] (85,-20){coordinate (b'2)} circle (0.8); \node[left] at (84,-17) {$b'_2$};
\draw[fill=black] (75,0){coordinate (c'2)} circle (0.8); \node[left] at (76,3) {$c'_2$};
\draw[fill=black] (45,20){coordinate (a2)} circle (0.8); \node[right] at (46,17) {$a_2$};
\draw[fill=black] (45,-20){coordinate (b2)} circle (0.8); \node[right] at (46,-17) {$b_2$};
\draw[fill=black] (55,0){coordinate (c2)} circle (0.8); \node[right] at (54,3) {$c_2$};
\draw[fill=black] (25,15){coordinate (x2)} {node[above] {$x_2$}} circle (0.8);
\draw[fill=black] (25,-15){coordinate (y2)} {node[below] {$y_2$}} circle (0.8);
\draw[thick] (a'2)--(b'2)--(c'2)--(a'2);
\draw[thick] (x2)--(y2)--(a2)--(b2)--(c2)--(x2)--(a2)--(c2)--(y2)--(b2)--(x2);
\draw[red,dotted,thick] (a2)--(a'2) (b2)--(b'2) (c2)--(c'2); 
\draw[red] (x1)--(x2) (y1)--(y2);
\draw[red] (-15,-15)--(-15,15)--(15,-15)--(15,15)--(-15,-15); 
\node[below] at (0,-22) {$\mathcal{H}_5$ (the chain $H$ is of type BB, TB, BT or TT)};
\end{scope}

\begin{scope}[xshift=10cm,yshift=-55cm]
\draw[fill=black] (-85,20){coordinate (a'1)} circle (0.8); \node[right] at (-84,17) {$a'_1$};
\draw[fill=black] (-85,-20){coordinate (b'1)} circle (0.8); \node[right] at (-84,-17) {$b'_1$};
\draw[fill=black] (-75,0){coordinate (c'1)} circle (0.8); \node[right] at (-76,3) {$c'_1$}; 
\draw[fill=black] (-45,20){coordinate (a1)} circle (0.8); \node[left] at (-46,17) {$a_1$};
\draw[fill=black] (-45,-20){coordinate (b1)} circle (0.8); \node[left] at (-46,-17) {$b_1$};
\draw[fill=black] (-55,0){coordinate (c1)} circle (0.8); \node[left] at (-54,3) {$c_1$};
\draw[fill=black] (-25,15){coordinate (x1)} {node[above] {$x_1$}} circle (0.8);
\draw[fill=black] (-25,-15){coordinate (y1)} {node[below] {$y_1$}} circle (0.8);
\draw[thick] (a'1)--(b'1)--(c'1)--(a'1);
\draw[thick] (x1)--(y1)--(a1)--(b1)--(c1)--(x1)--(a1)--(c1)--(y1)--(b1)--(x1);
\draw[red,dotted,thick] (a1)--(a'1) (b1)--(b'1) (c1)--(c'1); 
\draw[fill=black] (65,20){coordinate (a2)} circle (0.8); \node[left] at (66,15) {$a_2$};
\draw[fill=black] (65,-20){coordinate (a'2)} circle (0.8); \node[left] at (66,-15) {$a'_2$};
\draw[fill=black] (45,10){coordinate (b2)} circle (0.8); \node[right] at (45,8) {$b_2$};
\draw[fill=black] (45,-10){coordinate (b'2)} circle (0.8); \node[right] at (45,-8) {$b'_2$};
\draw[fill=black] (25,15){coordinate (x2)} {node[above] {$x_2$}} circle (0.8);
\draw[fill=black] (25,-15){coordinate (y2)} {node[below] {$y_2$}} circle (0.8);
\draw[thick] (x2)--(a2)--(b2)--(x2);
\draw[thick] (y2)--(a'2)--(b'2)--(y2);
\draw[red,dotted,thick] (a2)--(a'2) (b2)--(b'2); 
\draw[red] (x1)--(x2) (y1)--(y2);
\draw[red] (-15,-15)--(-15,15)--(15,-15)--(15,15)--(-15,-15); 
\node[below] at (-10,-22) {$\mathcal{H}_6$ (the chain $H$ is of type BB or TB)};
\end{scope}

\begin{scope}[yshift=-110cm]
\draw[fill=black] (-65,20){coordinate (a1)} circle (0.8); \node[right] at (-66,15) {$a_1$};
\draw[fill=black] (-65,-20){coordinate (a'1)} circle (0.8); \node[right] at (-66,-15) {$a'_1$};
\draw[fill=black] (-45,10){coordinate (b1)} circle (0.8); \node[right] at (-45,8) {$b_1$};
\draw[fill=black] (-45,-10){coordinate (b'1)} circle (0.8); \node[right] at (-45,-8) {$b'_1$};
\draw[fill=black] (-25,15){coordinate (x1)} {node[above] {$x_1$}} circle (0.8);
\draw[fill=black] (-25,-15){coordinate (y1)} {node[below] {$y_1$}} circle (0.8);
\draw[thick] (x1)--(a1)--(b1)--(x1);
\draw[thick] (y1)--(a'1)--(b'1)--(y1);
\draw[red,dotted,thick] (a1)--(a'1) (b1)--(b'1); 
\draw[fill=black] (65,20){coordinate (a2)} circle (0.8); \node[left] at (66,15) {$a_2$};
\draw[fill=black] (65,-20){coordinate (a'2)} circle (0.8); \node[left] at (66,-15) {$a'_2$};
\draw[fill=black] (45,10){coordinate (b2)} circle (0.8); \node[left] at (45,8) {$b_2$};
\draw[fill=black] (45,-10){coordinate (b'2)} circle (0.8); \node[left] at (45,-8) {$b'_2$};
\draw[fill=black] (25,15){coordinate (x2)} {node[above] {$x_2$}} circle (0.8);
\draw[fill=black] (25,-15){coordinate (y2)} {node[below] {$y_2$}} circle (0.8);
\draw[thick] (x2)--(a2)--(b2)--(x2);
\draw[thick] (y2)--(a'2)--(b'2)--(y2);
\draw[red,dotted,thick] (a2)--(a'2) (b2)--(b'2); 
\draw[red] (x1)--(x2) (y1)--(y2);
\draw[red] (-15,-15)--(-15,15)--(15,-15)--(15,15)--(-15,-15); 
\node[below] at (0,-22) {$\mathcal{H}_7$ (the chain $H$ is of type BB)};
\end{scope}

\end{tikzpicture}
\caption{Classes $\mathcal{H}_5$, $\mathcal{H}_6$ and $\mathcal{H}_7$.}\label{H567}
\end{figure}

We are now in a position to state our second main result.

\begin{theorem}\label{Th3}
Let $G$ be a $2$-connected claw-free non-cycle graph without spanning $\varTheta$-subgraphs. If any proper induced subgraph of $G$ either has connectivity less than $2$, or is a cycle, or has a spanning $\varTheta$-subgraph, then $G\in\mathcal{H}_1\cup\mathcal{H}_2\cup\mathcal{H}_3\cup\mathcal{H}_4\cup\mathcal{H}_5\cup\mathcal{H}_6\cup\mathcal{H}_7$.
\end{theorem}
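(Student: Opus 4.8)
The argument has two parts. The routine one is to verify that each member of $\mathcal H_1\cup\cdots\cup\mathcal H_7$ is $2$-connected, claw-free, not a cycle, without a spanning $\varTheta$-subgraph, and minimal with these properties; this is a finite case check. Its point is that in each of these graphs every interior vertex of a pure link and of a pure chain has degree $2$, so a spanning $\varTheta$-subgraph would have to traverse every pure link and pure chain in full; the resulting forced paths --- three of them meeting at the central clique in $\mathcal H_1$--$\mathcal H_4$, or three, resp.\ two, leaving an $\mathcal L_1$-, resp.\ $\mathcal L_2$-, gadget in $\mathcal H_5$--$\mathcal H_7$ --- cannot be completed to a $\varTheta$-graph, since a $\varTheta$-graph has only two branch vertices and exactly one more edge than vertices. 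Minimality holds because deleting any single vertex shortens exactly one link or chain just enough that the forced full traversal is no longer forced, and a spanning $\varTheta$-subgraph then appears. The substantial part is the inclusion itself, whose proof we now plan.

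Let $G$ be as in the statement. Several easy facts come first. A hamiltonian non-cycle graph has a spanning $\varTheta$-subgraph (a Hamilton cycle plus one chord), so $G$ is non-hamiltonian; in particular $G$ is not complete. Claw-freeness gives that, for every vertex $v$, $N_G(v)$ is a union of two cliques. Since $G$ is non-hamiltonian and claw-free, Brousek's theorem supplies an induced $P_{k_1,k_2,k_3}\subseteq G$, which will serve as a scaffold (note $P_{k_1,k_2,k_3}$ does have a spanning $\varTheta$-subgraph, so the obstruction in $G$ must arise from how $G$ is built around such a subgraph). Minimality is used through a deletion/insertion mechanism: if $H$ is a proper induced subgraph of $G$ that is $2$-connected, then $H$ is a cycle or has a spanning $\varTheta$-subgraph, and one tries to enlarge that structure through $V(G)\setminus V(H)$ into a spanning $\varTheta$-subgraph of $G$ (when $H=G-v$ and $H$ is a cycle this always succeeds, as $v$ has two neighbours on $H$); hence whenever such an enlargement fails it pins down the attachment of $V(G)\setminus V(H)$ to $H$.

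The heart of the proof is a maximality argument. Choose a $\varTheta$-subgraph $T$ of $G$ (which exists, $G$ being $2$-connected and not a cycle) with $|V(T)|$ as large as possible; since $G$ has no spanning $\varTheta$-subgraph, $V(G)\setminus V(T)\ne\emptyset$. Let $D$ be a component of $G-V(T)$; $2$-connectivity forces $|N_T(D)|\ge 2$. Using claw-freeness one pins down enough of the structure of $\langle V(D)\cup N_T(D)\rangle_G$ --- it turns out to be path-like, fitting the bipath-chain and triangle-chain (square-of-a-path) templates, and each vertex attaching $D$ to $T$ can reach only restricted portions of the three arcs of $T$ --- and one analyses how $G$ decomposes along its $2$-cuts, which is exactly what produces the chain structure. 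One then attempts to reroute $T$ so as to absorb part of $D$; maximality of $|V(T)|$ forbids this, and examining what obstructs every such reroute forces one of two global pictures. Either $G$ consists of two end gadgets, each an $\mathcal L_1$- or $\mathcal L_2$-link, joined by a chain --- giving $\mathcal H_5$, $\mathcal H_6$ or $\mathcal H_7$, with the admissible chain types (BB/TB/BT/TT, BB/TB, BB, respectively) determined by whether each end is of type $\mathcal L_1$ or $\mathcal L_2$; or $G$ has three outer triangles all meeting one common clique whose order is forced to be $3$, $5$, $7$ or $9$ according to how many of the three triangles share a vertex with that clique ($1{+}1{+}1$, $2{+}2{+}1$, $3{+}2{+}2$, $3{+}3{+}3$), with the adjacencies among the outer triangles not absorbed into the clique being pure links --- giving $\mathcal H_1$, $\mathcal H_2$, $\mathcal H_3$ or $\mathcal H_4$. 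Everywhere, minimality of $G$ is what forces the connecting pieces to be \emph{pure} links and chains and the outer cliques to be triangles.

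I expect the main obstacle to be the case analysis in this last step: enumerating the possible attachments of $D$ --- and of a second, ``far'' component of $G-V(T)$ --- to $T$, certifying that each is non-reroutable, and, most delicately, showing that the ``degree of centralization'' is quantized, so that one really obtains exactly the four central-clique orders $3,5,7,9$ (with exactly the complementary pattern of inter-triangle pure links displayed in $\mathcal H_1$--$\mathcal H_4$) rather than hybrids, and that the chain-type restrictions for $\mathcal H_5$--$\mathcal H_7$ come out exactly as stated. Keeping the $\varTheta$-rerouting surgeries consistent with the claw-free local structure, and tracking which edges are forced into or out of $T$ at each stage, is where most of the effort goes.
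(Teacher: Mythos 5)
Your plan diverges fundamentally from the paper's proof, and in its present form it has a genuine gap: the step that actually produces the seven families is asserted rather than derived. You propose to take a $\varTheta$-subgraph $T$ with $|V(T)|$ maximum, study the components $D$ of $G-V(T)$, and argue that the obstructions to rerouting force ``one of two global pictures.'' But nothing in the proposal supplies the mechanism that quantizes the outcome --- why the central clique must have order exactly $3$, $5$, $7$ or $9$ with precisely the complementary pattern of inter-triangle pure links, why the end gadgets must be exactly the $\mathcal{L}_1$/$\mathcal{L}_2$ links, and why the chain types are restricted to BB/TB/BT/TT, BB/TB, BB according to the ends. You explicitly defer all of this to ``the case analysis in this last step,'' which is where the entire content of the theorem lies. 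Moreover, the maximality of $T$ is not the hypothesis you are given: the theorem's hypothesis is minimality of $G$ (every proper induced $2$-connected non-cycle subgraph has a spanning $\varTheta$-subgraph), and your plan never explains how that hypothesis interacts with a maximum $T$; an unbounded rerouting analysis around a longest $\varTheta$-subgraph in a claw-free graph is exactly the kind of case explosion that this hypothesis is designed to avoid.

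The paper's route is quite different and is worth internalizing, because it is what makes the classification finite. One first shows $\kappa(G)=2$ (via Oberly--Simic--Sumner plus a deletion/extension argument using minimality), then proves that a link $L(x,y)$ of $G$ over a $2$-cut is \emph{simple} (has a Hamilton $x$--$y$ path) if and only if it is a \emph{pure} link (a triangle or an induced path), that maximal simple links are pairwise disjoint, and that every vertex of degree at least $3$ lies in a $2$-cut. Coloring degree-$2$ vertices red and the rest blue exhibits $G$ (or a minimal non-simple link of $G$) as an \emph{unfoldment} of a multi-graph $F$, and the key translation (Lemma~\ref{LePropertyFG}(4) and Lemma~\ref{LePropertyFe0L}(3)) is that spanning $\varTheta$-subgraphs of $G$ correspond to Euler trails of $2$-edge-connected $F$, and Hamilton $x$--$y$ paths of a link to Euler tours. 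Minimality of $G$ then transfers to minimality of $F$ among $3$-edge-connected multi-graphs with too many odd-degree vertices, and Lemmas~\ref{LeMinimalF} and~\ref{LeMinimalFe0} reduce everything to the finitely many multi-graphs $M_1,\dots,M_4$ and $N_1,N_2$; this is precisely where the orders $3,5,7,9$ and the link families $\mathcal{L}_1,\mathcal{L}_2,\mathcal{L}_3$ come from. The connecting chain in the $\mathcal{H}_5$--$\mathcal{H}_7$ case is obtained not from rerouting but from a shortest pair of disjoint paths between the two non-simple links together with an analysis of crossed edges (Claims~\ref{ClCrossed} and~\ref{ClPureChainHi}). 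Without some substitute for this Euler-trail correspondence and the minimal-multi-graph classification, your outline cannot be completed as written. (Also note that the converse verification you begin with is not needed: Theorem~\ref{Th3} states only the forward implication.)
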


\section{Preliminaries}

\subsection{Some useful results}

We shall make use of the following theorems. Denote by $\alpha(G), \kappa(G)$ the independence number and the connectivity of a graph $G$, respectively. A graph $G$ is said to be \emph{locally connected} if $\langle N(v)\rangle$ is connected for all $v\in V(G)$.

\begin{theorem}[Oberly, Simic and Sumner \cite{6}]\label{Th A}
 If $G$ is a connected, locally connected claw-free graph of order at least $3$, then $G$ is hamiltonian.
\end{theorem}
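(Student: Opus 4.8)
The plan is to run a standard extremal ``longest cycle'' argument, exploiting two structural consequences of the hypotheses. First I would record the local structure: because $G$ is claw-free, for every vertex $v$ the induced neighborhood $\langle N(v)\rangle$ contains no independent set of size three, so $\alpha(\langle N(v)\rangle)\le 2$; combined with local connectivity this says each $\langle N(v)\rangle$ is a \emph{connected} graph of independence number at most two. Such graphs are traceable (a Hamilton path exists, e.g.\ via the Chv\'atal--Erd\H{o}s criterion $\kappa\ge\alpha-1$), and, more usefully, any two non-adjacent vertices of $\langle N(v)\rangle$ dominate all the others. I would also first observe that $G$ is in fact $2$-connected: a cut vertex $v$ would split $N(v)$ into parts lying in different components of $G-v$, forcing $\langle N(v)\rangle$ to be disconnected and contradicting local connectivity. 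In particular $G$ contains a cycle, so the extremal object below is well defined.

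Next, let $C=c_1c_2\cdots c_nc_1$ be a longest cycle in $G$ and suppose, for contradiction, that $V(G)\setminus V(C)\ne\emptyset$. Since $G$ is connected, I may pick a vertex $x\notin V(C)$ with a neighbor on $C$. The first easy observation is that no two neighbors of $x$ on $C$ are consecutive: if $c_i,c_{i+1}\in N(x)$ then inserting $x$ between them (replacing the edge $c_ic_{i+1}$ by the path $c_ixc_{i+1}$) yields a longer cycle, contradicting maximality. Consequently, for any neighbor $c_i$ of $x$ we have $c_{i-1},c_{i+1}\notin N(x)$, and applying claw-freeness at $c_i$ to the triple $\{x,c_{i-1},c_{i+1}\}\subseteq N(c_i)$ forces the chord $c_{i-1}c_{i+1}\in E(G)$.

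The core of the argument is then to convert this chord structure, together with local connectivity at the cycle vertices adjacent to $x$, into an actual lengthening of $C$. Here I would work inside $\langle N(c_i)\rangle$: since it is connected, I can take a shortest path from $x$ to $c_{i+1}$ through $N(c_i)$, and I would reroute $C$ so as to thread $x$ (and the off-cycle vertices met along the way) onto the cycle while using the chord $c_{i-1}c_{i+1}$ and the domination property ($\alpha\le 2$) to repair the gaps left behind. By choosing $x$, the cycle $C$, and the relevant attachment carefully (for instance, minimizing the gap between two nearest neighbors of $x$ on $C$), the aim is to exhibit a cycle strictly longer than $C$, the desired contradiction; as $C$ was longest, it must have been spanning, giving a Hamilton cycle.

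The main obstacle is precisely this last step: the rerouting is a delicate case analysis on the adjacencies among the consecutive cycle vertices $c_{i-1},c_i,c_{i+1}$, the vertex $x$, and the internal vertices of the chosen path in $\langle N(c_i)\rangle$ (which may themselves lie on or off $C$). Handling these cases so that every vertex deleted from $C$ is reabsorbed---thereby guaranteeing a net gain in length rather than a mere swap of $x$ for a cycle vertex---is where the hypotheses must be combined most carefully, and where the claw-free domination property and local connectivity are used in tandem.
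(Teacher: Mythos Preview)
The paper does not prove this theorem at all: it is quoted as a known result (Theorem~\ref{Th A}, attributed to Oberly, Simic and Sumner~\cite{6}) and is invoked only as a black box in the proof of Claim~\ref{Cl2Connected}. So there is no ``paper's own proof'' to compare against.

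That said, your outline is the standard extremal longest-cycle strategy used in the original Oberly--Sumner paper, and the preliminary observations (2-connectivity from local connectivity, the chord $c_{i-1}c_{i+1}$ from claw-freeness, $\alpha(\langle N(v)\rangle)\le 2$) are all correct and relevant. The genuine gap is the one you yourself identify: the rerouting step is not actually carried out. Your proposal stops at ``take a shortest path from $x$ to $c_{i+1}$ through $N(c_i)$ and reroute,'' but the internal vertices of that path may lie on $C$ in scattered positions, and showing that the resulting detour strictly increases the cycle length (rather than merely trading $x$ for some other vertex) requires a careful inductive or case-based argument that you have not supplied. In the original proof this is handled by a more refined choice of extremal object---typically one maximizes $|V(C)|$ and, subject to that, also the number of vertices in the component of $G-C$ containing $x$ that are ``close'' to $C$---so that the local path in $\langle N(c_i)\rangle$ can be threaded in without loss. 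Without specifying such a secondary extremality condition and executing the case analysis, the argument as written does not close.
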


\begin{theorem}[Egawa~\cite{4}]\label{Th B}
Let $G$ be a connected non-complete $P_4$-free graph, and let $S$ be a minimum vertex-cut of $G$. Then each vertex in $S$ is adjacent to all vertices in $G-S$.
\end{theorem}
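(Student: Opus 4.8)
The plan is to argue by contradiction and manufacture an induced $P_4$. Recall that the hypothesis ``$P_4$-free'' means precisely that no four vertices induce a path, so the whole argument reduces to showing that, if some $s\in S$ misses a vertex of $G-S$, then four specific vertices form such a path. The proof rests on a single general fact about minimum cuts, after which the $P_4$-free condition does all the work.

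First I would record the standard structural fact about minimum vertex-cuts. Let $C_1,\ldots,C_m$ (with $m\geq 2$, since $S$ is a cut) be the components of $G-S$. Then every vertex $s\in S$ has at least one neighbor in \emph{every} component $C_j$. This is a minimality argument: if $s$ had no neighbor in some $C_j$, then adding $s$ back to $G-S$ could not attach $C_j$ to anything, so $S\setminus\{s\}$ would still separate $C_j$ from the other components, contradicting that $|S|$ is minimum. This step is the only place that genuinely needs care, and I expect it to be the main (mild) obstacle; everything after it is routine.

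Next, suppose toward a contradiction that $s\in S$ is non-adjacent to some $v\in G-S$, and say $v$ lies in $C_1$. By the fact just established, $s$ also has a neighbor in $C_1$. Since $C_1$ is connected and contains both a neighbor and a non-neighbor of $s$, traversing a path in $C_1$ from a neighbor of $s$ towards $v$ yields an edge $pq$ of $C_1$ with $p\sim s$ and $q\not\sim s$. Then I invoke the fact once more to choose a neighbor $w$ of $s$ lying in a \emph{different} component, say $C_2$.

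The contradiction is then immediate. Consider the four distinct vertices $w,s,p,q$: the edges $ws$, $sp$, $pq$ are all present, while $sq$ is absent by the choice of $q$, and both $wp$ and $wq$ are absent because $w\in C_2$ and $p,q\in C_1$ lie in distinct components of $G-S$, between which there are no edges. Hence $\langle\{w,s,p,q\}\rangle$ is an induced $P_4$, contradicting that $G$ is $P_4$-free. As $s\in S$ and $v\in G-S$ were arbitrary, every vertex of $S$ is adjacent to every vertex of $G-S$. (One could alternatively exploit that a connected $P_4$-free graph is a join of smaller cographs and read off adjacencies from that decomposition, but the direct construction of the crossing edge $pq$ and the path $w\,s\,p\,q$ is the cleanest route and I would present it as above.)
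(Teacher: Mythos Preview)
Your proof is correct and is the standard direct argument for this well-known property of $P_4$-free graphs. However, the paper does not actually prove this statement: Theorem~\ref{Th B} is quoted from Egawa~\cite{4} and used as a black box in the proof of Theorem~\ref{Th5}, with no proof supplied in the paper itself. So there is nothing to compare against beyond noting that your argument is self-contained and valid, whereas the paper simply cites the result.
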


\begin{theorem}[Chv\'atal and Erd\H{o}s \cite{3}]\label{Th D}
Let $G$ be a $2$-connected graph. If $\alpha(G)\leq\kappa(G)$, then $G$ is hamiltonian.
\end{theorem}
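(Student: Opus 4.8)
The plan is to take a minimal counterexample $G$ and show it decomposes into a small number of dense ``branch blocks'' joined by pure links and chains, then to pin down both the blocks and their attachment pattern to one of $\mathcal{H}_1,\dots,\mathcal{H}_7$. First I would record the global reductions. Since $G$ is a non-cycle graph with no spanning $\varTheta$-subgraph, the remark relating Hamilton cycles to spanning $\varTheta$-subgraphs shows $G$ is non-hamiltonian; being connected, claw-free and of order at least $3$, Theorem~\ref{Th A} then forces $G$ to contain a locally disconnected vertex, and claw-freeness makes the neighborhood of every such vertex a union of two cliques. Applying Brousek's theorem gives an induced $P_{k_1,k_2,k_3}$, which I would treat as the \emph{skeleton}: its two triangles seed the branch blocks and its three connecting paths seed the chains. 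Crucially, $P_{k_1,k_2,k_3}$ itself admits a spanning $\varTheta$-subgraph, so $G$ must carry additional vertices beyond the skeleton; the whole point is to show these extra vertices are exactly what upgrades the skeleton's ends and interior to the prescribed link-and-chain form. Throughout I would use Theorem~\ref{Th D} as the quantitative engine: any $2$-connected induced piece $H$ that we declare non-hamiltonian must satisfy $\alpha(H)>\kappa(H)$, which is what prevents the independent ``ends'' from collapsing into something hamiltonian (hence $\varTheta$-spanned).

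The driving local mechanism is minimality. For any vertex $v$ with $G-v$ still $2$-connected and not a cycle, $G-v$ has a spanning $\varTheta$-subgraph $\varTheta_v$; since $G$ has none, $v$ admits no insertion, so no edge of $\varTheta_v$ joins two neighbors of $v$. Reading this insertion obstruction against claw-freeness (so $N(v)$ is two cliques) yields the local picture: vertices of degree $2$ sit on pure links, deletion of an interior link-vertex drops connectivity below $2$ (which is why the families are parametrized by arbitrary link/chain lengths rather than collapsed to minimum length), and the only places where three strands can meet are the $\mathcal{L}_1$/$\mathcal{L}_2$ ends and the central cliques. To control the dense pieces I would invoke Theorem~\ref{Th B}: inside a $P_4$-free region a minimum cut is complete to the remainder, which forces each dense blob to be a clique attached in exactly the allowed way and bounds its order.

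Next comes the branch count, which produces the coarse dichotomy. A spanning $\varTheta$-subgraph has \emph{exactly two} vertices of degree $3$. I would show that each $\mathcal{L}_1$ end forces one of these two branch vertices to lie inside it, because its trapped triangle $\langle\{a',b',c'\}\rangle$ cannot be covered with all of $a',b',c'$ of degree $2$; and that each $\mathcal{L}_2$ end forces two strands to traverse it. A counting argument over the decomposition then splits into precisely two global shapes. In the \emph{linear} shape there are two such ends joined by a chain, giving $\mathcal{H}_5,\mathcal{H}_6,\mathcal{H}_7$ according as both ends are $\mathcal{L}_1$, one is $\mathcal{L}_1$ and one $\mathcal{L}_2$, or both are $\mathcal{L}_2$; the chain-type restrictions (BB, TB, BT, TT) are dictated by how many strands each $\mathcal{L}_2$ end demands. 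In the \emph{triangular} shape three outer triangles attach to one central clique, giving $\mathcal{H}_1,\mathcal{H}_2,\mathcal{H}_3,\mathcal{H}_4$ according to how many of the nine outer-to-center incidences are absorbed into an enlarged central clique (orders $3,5,7,9$) versus realized as the cyclic links $\{z_1,z_2\}$, $\{x_2,x_3\}$, $\{y_3,y_1\}$.

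Finally I would clean up the exact clique orders and the absence of superfluous edges or vertices by one further appeal to minimality: any vertex whose deletion preserves both $2$-connectivity and the no-spanning-$\varTheta$ property would contradict minimality, which forbids over-large cliques and extra chords and fixes the central orders at $3,5,7,9$. I expect the main obstacle to be the middle stage, namely rigorously excluding ``hybrid'' or ``asymmetric'' configurations: an end that is neither $\mathcal{L}_1$ nor $\mathcal{L}_2$, or a branch block carrying four or more arms. Ruling these out requires a lengthy case analysis driven by the insertion obstruction and by repeated use of Theorem~\ref{Th D} to stop a piece from becoming prematurely hamiltonian; the most error-prone bookkeeping will be at the chain's internal $4$-cliques, where consecutive pure chains are glued, and matching those gluings against the BB/TB/BT/TT type constraints for $\mathcal{H}_5$--$\mathcal{H}_7$.
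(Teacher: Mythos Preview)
Your proposal is aimed at the wrong theorem. The statement you were asked to address is the Chv\'atal--Erd\H{o}s theorem (Theorem~\ref{Th D}): a $2$-connected graph with $\alpha(G)\le\kappa(G)$ is hamiltonian. This is a classical result from~\cite{3} that the paper merely \emph{cites} as a tool; the paper contains no proof of it. What you have sketched instead is an outline for Theorem~\ref{Th3}, the structural characterization of minimal $2$-connected claw-free non-cycle graphs without spanning $\varTheta$-subgraphs. Nothing in your proposal---the skeleton $P_{k_1,k_2,k_3}$, the link/chain decomposition, the branch-counting dichotomy into $\mathcal{H}_1,\dots,\mathcal{H}_7$---has anything to do with proving that $\alpha(G)\le\kappa(G)$ forces a Hamilton cycle; you even invoke Theorem~\ref{Th D} as an ingredient, which would be circular if you were actually trying to prove it.

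If the intent was really to prove Theorem~\ref{Th D}, the standard argument is quite short and entirely unrelated to your sketch: take a longest cycle $C$, pick a vertex $v\notin C$, use Menger to find $\kappa(G)$ internally disjoint $v$--$C$ paths, and observe that the successors on $C$ of their landing points together with $v$ form an independent set of size $\kappa(G)+1>\alpha(G)$ (else one could lengthen $C$). If instead you meant to address Theorem~\ref{Th3}, note that the paper's own proof proceeds quite differently from your plan: it does not use Brousek's $P_{k_1,k_2,k_3}$ as a skeleton, nor Theorem~\ref{Th B} on $P_4$-free regions, nor Theorem~\ref{Th D} in the structural analysis. Rather, it first shows $\kappa(G)=2$, develops the machinery of simple/pure links and the ``unfoldment'' correspondence with loopless and semi-loopless multigraphs (Lemmas~\ref{LeUnfold}--\ref{LeMinimalFe0}), and then splits into the case where every $2$-cut is simple (yielding $\mathcal{H}_1$--$\mathcal{H}_4$ via Lemma~\ref{LeMinimalF}) versus the case of a non-simple $2$-cut (yielding $\mathcal{H}_5$--$\mathcal{H}_7$ via Lemma~\ref{LeMinimalFe0} and a direct analysis of the chain between two minimal non-simple links).
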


Let $G$ be a graph.
By a \emph{$2$-cut} we mean a vertex-cut of $G$ with exactly 2 vertices. Let $\{x,y\}$ be a 2-cut of $G$ and $H$ be a component of $G-\{x,y\}$. We call the induced subgraph $L=\langle\{x,y\}\cup V(H)\rangle$ a \emph{link of $G$} connecting $x,y$. We shall use $L(x,y)$ to denote such a link of $G$. If $G-\{x,y\}$ has exactly two components, then $G-H$ is also a link of $G$ connecting $x,y$, which is called the \emph{co-link} of the link $L$. We shall denote by $L^*$ (or $L^*(x,y)$) the co-link of $L$.

\begin{lemma}\label{LeCutxy}
Let $G$ be a $2$-connected graph, $\{x_1,x_2\}$ be a $2$-cut of $G$ and $H$ be a component of $G-\{x_1,x_2\}$. If $G$ contains a spanning $\varTheta$-subgraph $T$, then each $x_i$ has a neighbor $z_i$
in $H$ such that $x_iz_i\in E(T)$ for $i=1,2$.
\end{lemma}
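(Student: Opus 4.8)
\textbf{Proof plan for Lemma~\ref{LeCutxy}.}
The plan is to argue directly from the structure of a spanning $\varTheta$-subgraph. Recall that $T$ consists of three internally disjoint paths $P_1,P_2,P_3$ joining two branch vertices $u,v$, and that every vertex of $G$ lies on one of these paths. First I would observe that since $\{x_1,x_2\}$ is a cut separating $H$ from the rest of $G$, every edge of $T$ with at least one endpoint in $V(H)$ has both endpoints in $V(H)\cup\{x_1,x_2\}$; in particular every $T$-edge incident to a vertex of $H$ stays inside $\langle V(H)\cup\{x_1,x_2\}\rangle$. Consider the subgraph $T'=T\cap\langle V(H)\cup\{x_1,x_2\}\rangle$, i.e.\ the union of those edges of $T$ lying in this induced subgraph. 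Since $H\ne\emptyset$ and $G$ is $2$-connected, $H$ has vertices, and each such vertex has degree $2$ or $3$ in $T$; all of those incident edges lie in $T'$. The key point is then a counting/parity argument: in the cycle-plus-chord graph $T$, removing the two vertices $x_1,x_2$ leaves a disjoint union of (at most three) paths; the part of this inside $H$ is a union of subpaths, and each such subpath has its two ends attached, in $T$, either to $x_1$, to $x_2$, or (for the branch-vertex case) it could be a whole path component.

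The cleanest way to make this precise is to look at $T-\{x_1,x_2\}$. Since $T$ is obtained from a cycle (say $P_1\cup P_2$) by adding the path $P_3$ between two of its vertices, deleting the two vertices $x_1,x_2$ disconnects $T$ into a collection $\mathcal{C}$ of components, each of which is a path (possibly a single vertex), and each component is a connected piece that in $T$ has all its edges to $x_1,x_2$ accounted for; moreover the total number of $T$-edges leaving the set $\{x_1,x_2\}$ is $\deg_T(x_1)+\deg_T(x_2)-2[x_1x_2\in E(T)]\in\{4,5,6\}$ minus the internal adjustment, so it is at most $6$ and each component of $T-\{x_1,x_2\}$ receives at least one such edge (as $T$ is connected). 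Now $H$, being a component of $G-\{x_1,x_2\}$, is a union of some of the components in $\mathcal{C}$; pick any component $C\subseteq H$ of $T-\{x_1,x_2\}$. Being a path, $C$ has two ends (or one vertex, which I treat as a degenerate path whose single vertex plays the role of both ends). Each end $w$ of $C$ has $\deg_T(w)\ge 2$ but only one $T$-neighbor inside $C$, so $w$ has a $T$-neighbor outside $C$; that neighbor must lie in $\{x_1,x_2\}$ because $C$ is a whole component of $T-\{x_1,x_2\}$ and $C\subseteq H$.

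It remains to see that $C$ cannot be attached (in $T$) only to $x_1$ or only to $x_2$. Suppose for contradiction that every $T$-edge leaving $C$ goes to $x_1$ (and none to $x_2$). Then $C\cup\{x_1\}$ together with those edges is a connected subgraph of $T$ meeting the rest of $T$ only at the single vertex $x_1$; but $T$ is $2$-connected (every $\varTheta$-graph is $2$-connected), so $T$ has no cut vertex, contradiction --- unless $V(T)\setminus(V(C)\cup\{x_1\})=\emptyset$, i.e.\ $T$ lives entirely on $V(C)\cup\{x_1\}$, forcing $V(G)\subseteq V(C)\cup\{x_1\}\subseteq V(H)\cup\{x_1\}$, which makes $x_2$ an isolated vertex of $G$ or at least shows $\{x_1\}$ alone separates $G$, contradicting $2$-connectivity of $G$ (and $H$ being a component of $G-\{x_1,x_2\}$ with $x_2$ having neighbours). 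Hence $C$ --- and therefore $H$ --- sends a $T$-edge to $x_1$ and a $T$-edge to $x_2$. Taking $z_i\in V(H)$ to be the endpoint in $H$ of such a $T$-edge at $x_i$ gives $x_iz_i\in E(T)\subseteq E(G)$, which is exactly the assertion.

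\textbf{Main obstacle.} The only delicate point is the bookkeeping about which components of $T-\{x_1,x_2\}$ lie inside $H$ and the degenerate cases (when $x_1$ or $x_2$ is a branch vertex of $T$ of degree $3$, when $x_1x_2\in E(T)$, or when $C$ is a single vertex). I expect to handle these uniformly by phrasing everything in terms of ``$T$ has no cut vertex'' (true since $\varTheta$-graphs are $2$-connected) rather than by case analysis on the positions of $x_1,x_2$ on the three paths: the contradiction in the last paragraph is exactly a statement that $x_1$ would otherwise be a cut vertex of $T$, so no case split on the shape of $T$ is really needed.
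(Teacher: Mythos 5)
Your argument is correct and is essentially the paper's: the paper disposes of this lemma in one line by saying it ``can be deduced by the fact that $T$ is a 2-connected spanning subgraph of $G$'', which is exactly the no-cut-vertex argument you spell out (a component $C$ of $T-\{x_1,x_2\}$ inside $H$ attached in $T$ only to $x_1$ would make $x_1$ a cut vertex of $T$ separating $C$ from $x_2$). One inessential inaccuracy: your claim that every component of $T-\{x_1,x_2\}$ is a path is false in general --- a component may still contain a branch vertex of degree $3$ when neither deleted vertex lies on the corresponding side of the $\varTheta$-graph --- but this does no harm, since your final argument uses only that $T$ is spanning, $2$-connected, and hence has minimum degree $2$ and no cut vertex.
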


\begin{proof}
This can be deduced by the fact that $T$ is a 2-connected spanning subgraph of $G$.
\end{proof}

\subsection{The unfoldments of loopless multi-graphs}

By a \emph{multi-graph} we mean a graph in which multi-edges or loops are allowed, and a multi-graph is \emph{loopless} if it does not contain loops. A coloring of a graph here stands for a vertex-coloring with two colors red and blue, and a \emph{colored graph} means one with such a vertex-coloring. We will always assume that the concerning graphs has no isolated vertices in the following. 

Let $F$ be a loopless multi-graph. We define a colored graph $G$, called an \emph{unfoldment} of $F$, as follows: First for every edge $e\in E(F)$ with end-vertices $u,v$, we assign two distinct vertices $u^e,v^e$ and a pure link $L^e$ connecting $u^e$ and $v^e$. Second for every vertex $u\in V(F)$, we assign a clique $K^u$ with vertex set $\{u^e: e\in E(F)\mbox{ and }u\mbox{ is incident with }e\}$ and we remark that $d_F(u)=|K^u|$.
Thirdly we assign the inner vertices of all pure links red and the end-vertices of all pure links blue (we remark that for distinct edges $e_1,e_2$ in $F$, the pure links $L^{e_1}$, $L^{e_2}$ are vertex-disjoint). Finally let $G$ be the union of the above pure links and cliques, i.e., $G=(\bigcup_{e\in E(F)}L^e)\cup(\bigcup_{u\in V(F)}K^u)$. 

We remark that $F$ has infinitely many of unfoldments. Notice that all graphs $P_{k_1,k_2,k_3}$, $k_i\geq 2$, are unfoldments of the multi-graph $F$ with two vertices and three edges.

Let $G$ be a colored graph. A path of length at least 2 with both end-vertices blue and all inner-vertices red is called a \emph{feasible} path. Two blue vertices are \emph{associated} if they are connected by a feasible path.

\begin{lemma}\label{LeUnfold}
Let $F$ be a loopless multi-graph and let $G$ be an unfoldment of $F$. Then\\
\indent (1) $G$ is claw-free;\\
\indent (2) all red vertices are of degree $2$;\\
\indent (3) every blue vertex has a unique red neighbor;\\
\indent (4) two associated blue vertices have no common blue neighbors;\\
\indent (5) if two associated blue vertices are adjacent, then they have a common red neighbor.
\\
Consequently, if a colored graph $G$ satisfies (1)-(5), then $G$ is an unfoldment of some loopless multi-graph.
\end{lemma}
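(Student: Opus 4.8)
\textbf{Proof proposal for Lemma~\ref{LeUnfold}.}

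The plan is to treat the two directions separately. For the forward direction, I would verify properties (1)--(5) directly from the construction of an unfoldment $G$. Property (2) is immediate: every red vertex is an inner vertex of some pure link $L^e$, and in a pure link (a triangle or a path of length $\geq 2$) the inner vertices have degree $2$; since distinct pure links are vertex-disjoint and red vertices lie in no clique $K^u$, their degree in $G$ is exactly $2$. Property (3): a blue vertex is some $u^e$, lying in the clique $K^u$ (all of whose other members are blue) and in the pure link $L^e$; its only red neighbor is the unique inner vertex of $L^e$ adjacent to the end-vertex $u^e$ (this is well-defined whether $L^e$ is a triangle or a longer path), so the red neighbor is unique. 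Property (1): suppose $v$ is the center of an induced claw. If $v$ is red, it has degree $2$ by (2), impossible. If $v=u^e$ is blue, its neighborhood is $(K^u\setminus\{u^e\})\cup\{r\}$ where $r$ is its unique red neighbor in $L^e$; since $K^u\setminus\{u^e\}$ is a clique, an independent set of size $3$ in $N(v)$ must use $r$ and at most one vertex of $K^u$, so it has size $\leq 2$, a contradiction. Property (4): two associated blue vertices $p,q$ are the end-vertices of a feasible path, hence of a pure link $L^e$ with $\{p,q\}=\{u^e,v^e\}$ for some edge $e=uv$ of $F$; a common blue neighbor $w$ would lie in $K^u\cap K^v$, forcing $w=z^f$ with $z$ incident to both $u$ and $v$ and $z\in\{u,v\}$—but then $f$ is a loop or $w\in\{p,q\}$, both excluded. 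Property (5): if such $p=u^e$, $q=v^e$ are adjacent, the edge $pq$ lies in some $K^u$, forcing $u^e$ and $v^e$ to share an incident vertex of $F$, i.e.\ $L^e$ is a triangle; then $p,q$ and the single inner (red) vertex of $L^e$ form that triangle, giving the common red neighbor.

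For the converse, suppose a colored graph $G$ (with no isolated vertices) satisfies (1)--(5); I would reconstruct $F$. First I would show that the red vertices organize into disjoint ``threads'': since red vertices have degree $2$ by (2) and $G$ is claw-free, I would argue that the connected components of the subgraph induced by the red vertices, together with their blue endpoints, are exactly paths (possibly trivial) whose two endpoints are blue and all of whose interior is red—call these the maximal red paths. Using (3) (each blue vertex has a unique red neighbor) I would show each blue vertex is an endpoint of at most one such path and has exactly one red neighbor, so the maximal red paths plus the single edges joining associated-and-adjacent blue pairs (handled via (5)) give a canonical collection of pure links $L^e$. Then I would define $F$ on vertex set corresponding to the blue ``clusters'': declare two blue vertices equivalent if they are joined by a path using only blue vertices of the cluster, or more cleanly, contract each pure link to an edge and let the remaining blue--blue adjacencies form cliques; I would show each resulting blue clique is some $K^u$. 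The edges of $F$ are the pure links, incident to the cluster(s) containing their blue endpoints.

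The main obstacle will be the converse, specifically proving that the blue vertices partition correctly into the cliques $K^u$—that is, that the relation ``$p,q$ are blue, adjacent, and not the endpoints of a common pure link'' is an equivalence relation whose classes are cliques, and that each blue vertex lies in exactly one class and is the endpoint of exactly one pure link. Properties (4) and (5) are the tools: (4) guarantees that the ``pure-link'' adjacencies and the ``clique'' adjacencies between blue vertices do not overlap in a way that breaks transitivity, and (5) pins down the degenerate triangle case. I would need to rule out, using claw-freeness applied at blue vertices and the degree constraint (2) at red vertices, configurations where a blue vertex would have two distinct red neighbors or where two clique-blocks would overlap. Once the blocks $K^u$ and links $L^e$ are identified, assembling $F$ and checking $d_F(u)=|K^u|$ is routine. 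I expect the bookkeeping—rather than any deep idea—to be where care is required; the conceptual content is entirely captured by (1)--(5).
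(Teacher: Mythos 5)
Your proposal follows essentially the same route as the paper: the forward direction is verified exactly as there (red vertices are link-interior vertices of degree $2$, blue neighborhoods are unions of two cliques, disjointness of $K^u$ and $K^v$ for (4), the triangle case of $L^e$ for (5)), and your reconstruction of $F$ in the converse — feasible paths yielding the pure links, then grouping the remaining blue--blue adjacencies into cliques and contracting — is precisely the paper's construction. The one step you leave as a to-do, namely that the non-associated blue adjacencies decompose into disjoint cliques, is settled in the paper by exactly the short argument you anticipate: an induced $P_3$, say $xyz$, among blue vertices would force $xz\in E(G)$ by claw-freeness (since the unique red neighbor of $y$ is adjacent to neither $x$ nor $z$, as that adjacency would make the pair associated), whence $x$ and $z$ are associated with common blue neighbor $y$, contradicting (4).
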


\begin{proof}
Note that a red vertex of $G$ is an inner-vertex of some pure link $L^e$. So it has degree 2, and this proves (2). For a blue vertex $x$ of $G$, there is a unique vertex $u\in V(F)$ and a unique edge $e\in E(F)$ incident with $u$ such that $x=u^e$. The only red neighbor of $u^e$ is the inner-vertex of $L^e$ adjacent to $u^e$. Thus (3) holds. 
Let $x,y$ be two associated blue vertcies of $G$.
By the definiton of $G$,
there exists an edge $e=uv\in E(F)$
such that $x=u^e,y=v^e$.
 Then $u^e,v^e$ are end-vertices of $L^e$. Notice that the neighbors of $u^e$ outside $L^e$ are those in $K^u$, and the neighbors of $v^e$ outside $L^e$ are those in $K^v$. Since $V(K^u)\cap V(K^v)=\emptyset$, we see that $u^e,v^e$ have no common blue neighbors, and (4) holds. Since $F$ is loopless, $v^e\notin V(K^u)$. If $u^ev^e\in E(G)$, then it is an edge of $L^e$, implying that $L^e$ is a triangle, and (5) holds. Let $\bar{u}^e$ be the unique red neighbor of $u^e$. If $u^ev^e\in E(G)$, then $N_G(u^e)=\{\bar{u}^e,v^e\}\cup V(K^u)\backslash\{u^e\}$; if $u^ev^e\notin E(G)$, then $N_G(u^e)=\{\bar{u}^e\}\cup V(K^u)\backslash\{u^e\}$. In both case $\langle N_G(u^e)\rangle$ is the disjoint union of two cliques, which implies (1).

Now we suppose that $G$ is a colored graph satisfying (1)-(5). Let $V_b,V_r$ be the sets of blue vertices and red vertices of $G$, respectively. By (2) and (3), we see that every blue vertex is an end-vertex of exactly one feasible path, and that any two feasible paths are vertex-disjoint. It follows that $V_b$ can be partitioned into associated pairs. 

Let $x,y\in V_b$ be two associated vertices, and let $P$ be the feasible path from $x$ to $y$ in $G$. We first show that $\langle V(P)\rangle$ is a pure link from $x$ to $y$. Recall that all vertices in $P-\{x,y\}$ are red and then have degree 2 by (2). We are done if $xy\notin E(G)$. Suppose now that $xy\in E(G)$. By (5) $x,y$ have a common neighbor $z\in V_r$. Since $d(z)=2$, we have that $P=xzy$, and $\langle V(P)\rangle$ is a triangle, which is also a pure link from $x$ to $y$. We shall call $\langle V(P)\rangle$ the \emph{feasible link} of $G$ from $x$ to $y$.

Let $G_b$ be the graph with the vertex set $V_b$ and
the edge set $E(G_b)=\{xy\in E(G): x,y$ are not associated$\}$.
We now claim that each component of $G_b$ is a clique. Suppose otherwise that $G_b$ contains an induced $P_3$, say $xyz$. Let $\bar{y}$ be the red neighbor of $y$ in $G$. If $x\bar{y}\in E(G)$, then $x,y$ are associated and $xy\notin E(G_b)$, a contradiction. Thus we conclude that $x\bar{y}\notin E(G)$, and similarly, $z\bar{y}\notin E(G)$. It follows that $xz\in E(G)$ by (1). Since $xz\notin E(G_b)$, it follows that $x$ and $z$ are associated, but then they have a common blue neighbor $y$, contradicting (4). So as we claimed, $G_b$ is the union of vertex-disjoint cliques. 

Notice that two associated vertices are nonadjacent in $G_b$, implying that they are in distinct components of $G_b$. Now let $F$ be the graph obtained from $G$ by replacing each feasible link by an edge between the end-vertices of the link, and then contracting all edges of $G_b$ (making each component of $G_b$ to be a single vertex). We can see that $G$ is an unfoldment of $F$.
\end{proof}

Conversely, $F$ can also be obtained from its unfoldment $G$ by the following operations: (1) removing all possible edges between associated pairs; (2) contracting all edges between blue vertices; (3) replacing each feasible path by an edge between the end-vertices of the path. It is trivial that $F$ is connected if and only if $G$ is connected. We now prove the following lemma concerning the properties of multi-graphs and their unfoldments.

\begin{lemma}\label{LePropertyFG}
Let $F$ be a loopless multi-graph and let $G$ be an unfoldment of $F$. Suppose $G$ is not a cycle. Then\\
\indent (1) $F$ is $2$-edge-connected if and only if $G$ is $2$-connected;\\
\indent (2) $F$ is $3$-edge-connected if and only if $G$ is $2$-connected and every $2$-cut $\{x,y\}$ of $G$ with both $x,y$ blue is an associated pair;\\ 
\indent (3) $G$ has a Hamilton cycle if and only if $F$ has an Euler tour;\\
\indent (4) $G$ has a spanning $\varTheta$-subgraph if and only if $F$ is $2$-edge-connected and has an Euler trail.
\end{lemma}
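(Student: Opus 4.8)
The plan is to exploit the rigidity of an unfoldment $G$ of $F$: by Lemma \ref{LeUnfold}(2) every red vertex has degree $2$ and lies in the interior of a unique pure link, so any spanning subgraph of $G$ with minimum degree at least $2$ must contain each pure link $L^e$ in its entirety. Consequently a spanning $2$-regular (respectively, spanning $\varTheta$-) subgraph of $G$ is determined by how it meets the cliques $K^u$, and $T\cap K^u$ is always a disjoint union of paths whose end-vertices encode the way the corresponding closed walk/trail in $F$ pairs up the edges incident with $u$. All four statements are proved by making this correspondence precise; recall also that $F$ is connected if and only if $G$ is connected, and that $G$ is not a cycle by hypothesis.

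For (1) I would argue one direction contrapositively: if $F$ has a bridge $e=uv$, then the end-vertex $u^e$ of $L^e$ is a cut-vertex of $G$ (deleting it separates the interior and $v$-side of $L^e$ from $K^u\setminus\{u^e\}$; the case where $L^e$ is a triangle is similar). Conversely, if $F$ is $2$-edge-connected I show $G$ has no cut-vertex: a red one is ruled out because a $u$-$v$ path in $F-e$ reroutes around it, and for a blue one $u^e\in K^u$ the connectedness of $F-e$ supplies a path in $G-u^e$ from $K^u\setminus\{u^e\}$ to the rest of $G$. Part (2) is the same analysis one level down. A $2$-edge-cut $\{e,f\}$ of $F$, with components $A,B$ and $e=a_1b_1$, $f=a_2b_2$, yields a \emph{non-associated} blue $2$-cut of $G$, namely $\{a_1^e,a_2^f\}$ (with obvious variants such as $\{a^e,b_2^f\}$ when $a_1=a_2$ or $|A|=1$); conversely a non-associated blue $2$-cut $\{u^e,v^f\}$ of $G$ (so necessarily $e\neq f$) forces $\{e,f\}$ to be an edge-cut of $F$. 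Together with (1), this gives the equivalence in (2).

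For (3), from an Euler tour of $F$ I fold a Hamilton cycle of $G$: traverse each $L^e$, and at every $u$ insert into $K^u$ the perfect matching pairing $u^e$ with $u^{e'}$ whenever the tour enters $u$ via $e$ and leaves via $e'$ (this matching exists since $d_F(u)=|K^u|$ is even), connectedness of the tour making the result a single spanning cycle. Conversely a Hamilton cycle $C$ contains every $L^e$, never uses the chord of a triangle link (that would force a $3$-cycle, hence $G$ would be a triangle), and therefore meets each $K^u$ in a perfect matching; the induced pairing of the edges of $F$ at each $u$ is an Euler tour. For necessity in (4): a spanning $\varTheta$-subgraph $T$ is $2$-connected, so $G$ is $2$-connected and $F$ is $2$-edge-connected by (1); moreover $T$ contains every $L^e$, and---after isolating the one possible case in which the chord of a triangle link lies in $T$, which forces all $T\cap K^u$ to be perfect matchings, hence $F$ to have all even degrees (so an Euler tour)---each $T\cap K^u$ is a union of nontrivial paths; the number of its internal (i.e.\ $\varTheta$-branch) vertices has the same parity as $d_F(u)$, and these numbers sum to $2$ over all $u$. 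Thus $F$ has exactly $0$ or $2$ vertices of odd degree and, being connected, an Euler trail.

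The step I expect to require genuine work is sufficiency in (4), because naively folding an Euler trail of $F$ into a Hamilton path of $G$ and then joining its two ends may give a ``dumbbell'' (two cycles linked by a path) rather than a $\varTheta$. Instead I would use that a $2$-edge-connected graph $F$ contains, for any two vertices $s,t$, a closed trail $D$ through both of them (glue together a cycle in each block along the $s$-$t$ path in the block-cut tree of $F$). Then $F\setminus E(D)$ has $s$ and $t$ as its only odd-degree vertices, so the component $C_0$ of $F\setminus E(D)$ containing $s$ also contains $t$ and admits an Euler trail from $s$ to $t$, while every other component admits an Euler tour and shares a vertex with $V(D)$. Folding $D$ gives a cycle $C_D$ of $G$, folding the trail of $C_0$ gives a path $P_0$ of $G$ from $s^{e'_1}\in K^s$ to $t^{e'_m}\in K^t$, and folding each remaining component's tour gives a cycle; I then splice each such cycle into $C_D$ across the clique of a shared vertex of $F$ (which, crucially, is neither $s$ nor $t$), and finally attach the two ends of $P_0$ to $C_D$ inside $K^s$ and $K^t$ (possible since $s,t\in V(D)$, so $C_D$ meets $K^s$ and $K^t$). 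The outcome is a spanning cycle of $G$ together with an internally disjoint path joining two of its vertices---a spanning $\varTheta$-subgraph. The bookkeeping ensuring all the spliced pieces and the attached path remain vertex-disjoint (so that a $\varTheta$, not a dumbbell, emerges) is the main technical burden, and the closed-trail-through-$s$-and-$t$ device is what keeps it manageable; the case where $F$ is Eulerian is handled separately via (3) and the fact that a Hamilton cycle of the non-cycle graph $G$ extends to a spanning $\varTheta$-subgraph.
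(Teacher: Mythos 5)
Your proposal is correct, and for parts (1)--(3) it follows essentially the same trail-folding correspondence as the paper (trails of $F$ fold to paths of $G$ through the rigid pure links, cliques $K^u$ encode the pairing of edges at $u$); the one point worth tightening in (2) is that a non-associated blue $2$-cut $\{u^e,v^f\}$ need not make $\{e,f\}$ itself the edge-cut --- the paper first replaces the $2$-cut, if necessary, so that the associated partners of its two vertices lie in \emph{different} components, and only then reads off the $2$-edge-cut of $F$. Part (4) is where you genuinely diverge. For sufficiency the paper works with the two odd-degree vertices $u_0,v_0$ directly: a parity argument shows no $2$-edge-cut separates them, so there are three edge-disjoint $u_0$--$v_0$ paths, which (all other degrees being even) extend to three edge-disjoint trails covering $E(F)$; folding these gives the three branches of the $\varTheta$ at once, joined by four clique edges in $K^{u_0}$ and $K^{v_0}$. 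You instead build the $\varTheta$ as ``cycle plus ear'': a closed trail $D$ through $s,t$ folds to a cycle, the Euler tours of the even components of $F\setminus E(D)$ are spliced into it across shared cliques, and the Euler trail of the odd component supplies the attached path. Both constructions carry the same kind of disjointness bookkeeping; the paper's has the advantage that the $\varTheta$ structure is visible from the start, while yours reuses the splicing mechanics already implicit in (3) and avoids proving the three-edge-disjoint-paths claim. For necessity the paper converts the spanning $\varTheta$ into a Hamilton path between two blue vertices and reads off an Euler trail, whereas you count the $\varTheta$-branch vertices inside each clique $K^u$ (each $T\cap K^u$ is a disjoint union of paths whose internal-vertex count is congruent to $d_F(u)$ mod $2$ and these counts sum to $2$); your parity count is arguably cleaner, though you do need the side case where a triangle link's chord lies in $T$, which you correctly isolate.
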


\begin{proof}
Recall that every edge $e$ of $F$ corresponds to a unique link $L^e$ of $G$, and that two distinct edges in $F$ correspond to two vertex-disjoint links in $G$. We denote by $P^e$ the feasible path of $G$ contained in $L^e$. That is, if $L^e$ is a path, then $P^e=L^e$; and if $L^e$ is a triangle, then $P^e$ is obtained from $L^e$ by removing the edge between the two end-vertices of $L^e$.

Let $T=u_0e_1u_1e_2u_2\ldots u_{k-1}e_ku_k$ be a trail of $F$. Then $u_0^{e_1}P^{e_1}u_1^{e_1}u_1^{e_2}P^{e_2}u_2^{e_2}\ldots u_{k-1}^{e_k}P^{e_k}u_k^{e_k}$ is a path of $G$, and we denote by $P^T$. Moreover, if $T$ is a closed trail, i.e., $u_0=u_k$, then the above path can be extended to be a cycle by adding the edge $u_0^{e_k}u_0^{e_1}$, which we denote by $C^T$.

(1) By the assumption that $G$ is not a cycle, we see that $G$ has at least 4 blue vertices, and thus $F$ has at least 2 edges. Now $F$ is 2-edge-connected if and only if each two edges of $F$ are contained in a closed trail, if and only if each two vertices of $G$ are contained in a cycle, and if and only if $G$ is 2-connected.

(2) By (1) we can assume that $F$ is 2-edge-connected and $G$ is 2-connected. For the sufficiency,
suppose otherwise that $F$ has an edge-cut $\{e_1,e_2\}$. Let $u_i,v_i$ be the two end-vertices of $e_i$, $i=1,2$, such that $u_1,u_2$ are in a common component of $G-\{e_1,e_2\}$ and $v_1,v_2$ are in another component. Consider the four blue vertices $u_1^{e_1}, v_1^{e_1}, u_2^{e_2}, v_2^{e_2}$ of $G$. Notice that every path of $G$ from $u_1^{e_1}$ to $v_2^{e_2}$ must pass through $v_1^{e_1}$ or $u_2^{e_2}$. It follows that $\{v_1^{e_1},u_2^{e_2}\}$ is a 2-cut of $G$. Moreover the two components of $G-\{v_1^{e_1},u_2^{e_2}\}$ contain $u_1^{e_1}$ and $v_2^{e_2}$, respectively. It follows that $v_1^{e_1}$ and $u_2^{e_2}$ are not associated, a contradiction.

To prove the necessity, 
suppose by contradiction that $G$ has a 2-cut $\{x,y\}$ with both $x,y$ blue and $x,y$ are not associated. Let $z,w$ be the vertices of $G$ associated with $x,y$, respectively. If $z,w$ are in a same component of $G-\{x,y\}$, then $\{x,w\}$ is also a 2-cut of $G$ and $y,z$ are in distinct components of $G-\{x,w\}$. So we can assume without loss of generality that $z,w$ are in distinct components of $G-\{x,y\}$. Let $e_1,e_2\in E(F)$ with end-vertices $u_1,v_1$ and $u_2,v_2$, respectively, such that $x=u_1^{e_1}$, $y=v_2^{e_2}$ (and then $z=v_1^{e_1}$, $w=u_2^{e_2}$). If $F$ contains a path from $v_1$ to $u_2$ avoiding $e_1,e_2$, then $G$ has a path from $z$ to $w$ avoiding $x,y$, contradicting the fact that $\{x,y\}$ is a 2-cut of $G$. This implies that $\{e_1,e_2\}$ is an edge-cut of $F$, 
contradicting the fact that $F$ is 3-edge-connected.

(3) Suppose first that $F$ has an Euler tour $T=u_0e_1u_1e_2u_2\ldots u_{k-1}e_ku_k$, where $u_k=u_0$. Then $$C^T=u_0^{e_1}P^{e_1}u_1^{e_1}u_1^{e_2}P^{e_2}u_2^{e_2}\ldots u_{k-1}^{e_k}P^{e_k}u_k^{e_k}u_0^{e_1}$$ is a Hamilton cycle of $G$.

Conversely, suppose that $G$ has a Hamilton cycle $C$. Notice that every red vertices of $G$ has degree 2. It follows that $C$ contains all feasible paths of $G$. Since each blue vertex is an end-vertex of a feasible path, we can assume that $C=x_1P_1y_1x_2P_2y_2\ldots x_kP_ky_kx_1$, where $P_i$ is a feasible path connecting $x_i$ and $y_i$. For each $P_i$, there is an edge $e_i\in E(F)$ such that $P_i=P^{e_i}$. Since $y_ix_{i+1}$ is an edge of $G$ that is not contained in the link $L^e$ for any $e\in E(F)$. It follows that $y_i$ and $x_{i+1}$ are contained in a clique $K^{v_i}$ for some vertex $v_i\in V(F)$, $i=1,\ldots,k-1$. Moreover, $y_k$ and $x_1$ are contained in a clique $K^{v_0}$ for some vertex $v_0\in V(F)$. Now $T=v_0e_1v_1e_2v_2\ldots v_{k-1}e_kv_0$ is an Euler tour of $F$.

(4) We first prove the sufficiency.
Suppose that $F$ is 2-edge-connected and has an Euler trail.
Then $F$ has at most 2 odd degree vertices.
If $F$ has an Euler tour, then $G$ has a Hamilton cycle by (3). Since $G$ is not a cycle, $G$ has a spanning $\varTheta$-subgraph. Now we assume that $F$ has no Euler tours. It follows that $F$ has exactly 2 vertices of odd degree, say $u_0,v_0$. 

We claim that $F$ has three edge-disjoint paths from $u_0$ to $v_0$. Suppose otherwise that $F$ has an edge-cut separating $u_0,v_0$ with at most two edges. Since $F$ is 2-edge-connected, we can assume that the edge-cut contains exactly two edges $e_1,e_2$. Let $H$ be the component of $F-\{e_1,e_2\}$ containing $u_0$ but not $v_0$. If $e_1,e_2$ are incident with a common vertex in $H$, then $H$ has only one vertex $u_0$ of odd degree, a contradiction. Assume now $e_1,e_2$ are incident with two distinct vertices in $H$, say $u_1,u_2$. If $u_0\in\{u_1,u_2\}$, then $H$ has only one vertex of odd degree (the vertex in $\{u_1,u_2\}$ other than $u_0$); and if $u_0\notin\{u_1,u_2\}$, then $H$ has exactly three vertices $u_0,u_1,u_2$ of odd degree, both contradictions. 

So we conclude that $F$ has three edge-disjoint paths from $u_0$ to $v_0$. Since all vertices of $F$ other than $u_0,v_0$ have even degrees, the three edge-disjoint paths can be extended to be three edge-disjoint trails $T_1,T_2,T_3$ from $u_0$ to $v_0$ covering all edges of $F$.  Let $e_i,f_i$ be the first and last edges of $T_i$, $i=1,2,3$. It follows that $u_0$ is incident with $e_1,e_2,e_3$, and $v_0$ is incident with $f_1,f_2,f_3$. Now $P^{T_1}$, $P^{T_2}$, $P^{T_3}$ are three vertex-disjoint paths of $G$, where $P^{T_i}$ is from $u_0^{e_i}$ to $v_0^{f_i}$, $i=1,2,3$. Since $T_1,T_2,T_3$ cover all edges of $F$, we have that $P^{T_1}$, $P^{T_2}$, $P^{T_3}$ contain all vertices of $G$. Notice that $u_0^{e_1},u_0^{e_2},u_0^{e_3}$ are contained in $K^{u_0}$ and $v_0^{f_1},v_0^{f_2},v_0^{f_3}$ are contained in $K^{v_0}$. The subgraph of $G$ obtained from $P^{T_1}\cup P^{T_2}\cup P^{T_3}$ by adding four edges $u_0^{e_1}u_0^{e_2}$, $u_0^{e_2}u_0^{e_3}$, $v_0^{f_1}v_0^{f_2}$, $v_0^{f_2}v_0^{f_3}$, is a spanning $\varTheta$-subgraph of $G$.

We now prove the necessity.
Suppose now that $G$ has a spanning $\varTheta$-subgraph. Then $G$ is 2-connected, implying that $F$ is 2-edge-connected by (1). Moreover, $G$ contains a Hamilton path. We here show that $G$ has a Hamilton path connecting two blue vertices. Let $P$ be a Hamilton path of $G$ from $x$ to $y$. If both $x,y$ are blue, then we are done. Now suppose that $x$ is a red vertex, and then is of degree 2. 

Let $Q$ be the feasible path of $G$ containing $x$, and let $x_1,x_2$ be the two end-vertices of $Q$. Since all inner vertex of $Q$ is of degree 2, we see that $P$ contains either $Q[x,x_1]$ or $Q[x,x_2]$. We assume without loss of generality that $P$ contains $Q[x,x_2]$, which is, $P[x,x_2]=Q[x,x_2]$. If $|Q[x,x_1]|\geq 2$, then the neighbor of $x$ in $Q[x,x_1]$ must be $y$ (since it has degree 2). Now $P'=P[x_2^+,y]yxP[x,x_2]$, where $x_2^+$ is a successor of $x_2$ on $P$, is also a Hamilton path of $G$ connecting two blue vertices $x_2,x_2^+$, as desired. 

So we assume that $|Q[x,x_1]|=1$ and thus $xx_1\in E(G)$. Notice that $x_1$ appears on $P$ after $x_2$. Let $x_1^-$ be the predecessor of $x_1$ on $P$. Since $x$ is the unique red neighbor of $x_1$, we see that $x_1^-$ is a blue vertex. Now $P'=P[x_1^-,x]xx_1P[x_1,y]$ is a Hamilton path of $G$ with blue origin $x_1^-$. By a symmetric analysis we can find a Hamilton path $P''$ of $G$ with both origin and terminus blue.

Now suppose that $P$ is a Hamilton path of $G$ connecting two blue vertices. Then $P$ contains all feasible paths of $G$, and we can assume that $P=x_1P_1y_1x_2P_2y_2\ldots x_kP_ky_k$, where $P_i$ is a feasible path connecting $x_i$ and $y_i$. By a similar analysis as in (3), we can prove that $F$ has an Euler trail.
\end{proof}

\begin{lemma}\label{LeSubgraphF}
Let $F$ be a loopless multi-graph and let $G$ be an unfoldment of $F$. If $F'$ is a proper subgraph of $F$, then $G$ has a proper induced subgraph $G'$ which is an unfoldment of $F'$.
\end{lemma}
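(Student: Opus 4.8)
The plan is to build $G'$ directly from the link--clique data of the unfoldment $G$, discarding everything that corresponds to edges of $F$ absent from $F'$. Write $G=\big(\bigcup_{e\in E(F)}L^e\big)\cup\big(\bigcup_{u\in V(F)}K^u\big)$ as in the definition of unfoldment, set $V(G')=\bigcup_{e\in E(F')}V(L^e)$, and let $G'=\langle V(G')\rangle_G$ with the colouring inherited from $G$. I expect this $G'$ to be simultaneously a proper induced subgraph of $G$ and an unfoldment of $F'$, the links $L^e$ ($e\in E(F')$) serving as the assigned pure links of $F'$.

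\textbf{Properness.} I will use that distinct links $L^{e_1},L^{e_2}$ are vertex-disjoint and that every pure link contains at least one inner (red) vertex. First note that, under the standing assumption that all graphs considered are without isolated vertices, a proper subgraph $F'\subsetneq F$ must satisfy $E(F')\subsetneq E(F)$: if $E(F')=E(F)$ and some $u\in V(F)\setminus V(F')$, then $u$, being non-isolated in $F$, is incident with an edge of $E(F')=E(F)$ whose two ends lie in $V(F')$, a contradiction. Now choose $e\in E(F)\setminus E(F')$ and let $w$ be an inner vertex of $L^e$. By vertex-disjointness of the links, $w\notin V(L^{e'})$ for every $e'\neq e$, so $w\in V(G)\setminus V(G')$, whence $G'$ is a proper induced subgraph of $G$.

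\textbf{$G'$ is an unfoldment of $F'$.} For $u\in V(F')$ let $\widehat K^u=\langle V(K^u)\cap V(G')\rangle_G$; being an induced subgraph of the clique $K^u$ it is a clique, and by vertex-disjointness of the links its vertex set is $\{u^e: e\in E(F')\text{ and }e\text{ is incident with }u\}$, since $u^e\in V(G')$ exactly when $e\in E(F')$. Hence $V(G')$ is the union of these vertices over $u\in V(F')$ together with the inner vertices of the links $L^e$ ($e\in E(F')$), which is precisely the vertex set produced by applying the unfoldment recipe to $F'$ with the chosen links. For the edges: every edge of $G$ lies inside some $L^e$ or inside some $K^u$. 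An edge of $L^e$ has both ends in $V(G')$ if and only if $e\in E(F')$, and then it is an edge of $G'$; an edge $u^{e_1}u^{e_2}$ of $K^u$ has both ends in $V(G')$ if and only if $e_1,e_2\in E(F')$, and then it is an edge of $\widehat K^u$. Conversely every edge of the $L^e$ ($e\in E(F')$) and of the $\widehat K^u$ ($u\in V(F')$) is an edge of $G$ contained in $V(G')$. Therefore $G'=\big(\bigcup_{e\in E(F')}L^e\big)\cup\big(\bigcup_{u\in V(F')}\widehat K^u\big)$, with the inner vertices of each $L^e$ red and its ends blue; this is exactly an unfoldment of $F'$.

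\textbf{Where the work is.} There is no genuine obstacle: the argument is bookkeeping against the definition of unfoldment. The two points not to be skipped are (a) vertex-disjointness of distinct links together with the fact that every pure link carries a red vertex --- this is exactly what forces $G'$ to be a \emph{proper} subgraph once an edge of $F$ is dropped --- and (b) the reduction, via the no-isolated-vertices convention, from ``proper subgraph of $F$'' to ``strictly fewer edges than $F$''. One should also check that degenerate instances (e.g. $E(F')=\emptyset$) stay consistent with the definitions; alternatively, the statement follows by iterating the single-edge-deletion case just treated. If one prefers not to invoke the explicit construction of $G$, one can instead verify that $G'$ satisfies conditions (1)--(5) of Lemma~\ref{LeUnfold} and identify the resulting multi-graph with $F'$, but the direct route above is shorter.
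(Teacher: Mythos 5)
Your construction is exactly the paper's: the authors' entire proof is the single sentence that $G'$ is the subgraph induced by the vertices of the links $L^e$ for $e\in E(F')$, and you take the same $G'$ and verify the details (properness via an inner vertex of a discarded link, and the clique/link bookkeeping) that the paper leaves implicit. The argument is correct and essentially identical in approach.
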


\begin{proof}
The subgraph $G'$ is induced by all the vertices of all the links $L^e$ for $e\in E(F')$.
\end{proof}

\begin{lemma}\label{LeMinimalF}
Let $F$ be a $3$-edge-connected loopless multi-graph with at least four vertices of odd degree. If every $2$-edge-connected proper subgraph of $F$ has at most two vertices of odd degree, then $F$ is isomorphic to $M_1,M_2,M_3$ or $M_4$, as in Figure \ref{M1234}.
\end{lemma}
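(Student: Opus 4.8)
The plan is to use the $3$-edge-connectivity together with the minimality hypothesis to force $F$ onto four vertices, and then to solve a small system of equations; after the first two steps the problem becomes finite and essentially linear.

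\textbf{Step 1: $F$ has exactly four vertices, all of odd degree.} Since $F$ is $3$-edge-connected, $F-e$ is $2$-edge-connected for every edge $e$, and being a proper subgraph it has at most two odd-degree vertices by hypothesis. Deleting $e=uv$ flips only the parities of $d_F(u)$ and $d_F(v)$, while $F$ has at least four odd-degree vertices; hence $F$ has \emph{exactly} four odd vertices and both ends of \emph{every} edge of $F$ are odd. As $F$ is connected with no isolated vertex, this forces every vertex to be odd, so $|V(F)|=4$. Write $V(F)=\{1,2,3,4\}$, let $a_{ij}$ be the multiplicity of $ij$, and $d_i=\sum_{j\ne i}a_{ij}$; then each $d_i$ is odd (so $d_i\ge 3$), and each "pair cut" $a_{ik}+a_{il}+a_{jk}+a_{jl}=d_i+d_j-2a_{ij}$ is even, hence $\ge 4$. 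I also record that any subgraph of $F$ with four odd-degree vertices spans $V(F)$, so to contradict the hypothesis it suffices to produce one proper spanning subgraph of $F$ that is still $2$-edge-connected with all four degrees odd.

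\textbf{Step 2: every multiple edge of $F$ is incident with a vertex of degree $3$.} Suppose $a_{ij}\ge 2$ and $d_i,d_j\ge 5$, and delete a pair of parallel $ij$-edges to get $F'$. Then $F'$ keeps all degree parities; the singleton cuts $\{i\},\{j\}$ drop to $d_i-2,d_j-2\ge 3$, every pair cut separating $i$ from $j$ drops from $\ge 4$ to $\ge 2$, and all other cuts are unchanged. So $F'$ is a $2$-edge-connected proper spanning subgraph with four odd vertices, impossible. Hence $a_{ij}\ge 2$ implies $d_i=3$ or $d_j=3$.

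\textbf{Step 3: identifying $F$.} A $3$-edge-connected multigraph on four vertices with all degrees $3$ must be simple (from $d_i+d_j-2a_{ij}\ge 3$ we get $a_{ij}\le 1$), hence is $K_4=M_1$. So assume some $d_i>3$; I claim at most one vertex has degree $>3$. Indeed, if $d_1,d_2\ge 5$, Step 2 gives $a_{12}\le 1$, so $a_{13}+a_{14}\ge 4$ and $a_{23}+a_{24}\ge 4$; choosing (say) $a_{13}\ge 2$, Step 2 gives $d_3=3$, whence $a_{23}\le 1$, so $a_{24}\ge 3$, so Step 2 gives $d_4=3$, forcing $a_{24}=3$ and then $a_{14}=0$; but then $a_{13}\ge 4>d_3$, a contradiction. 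Thus exactly one vertex, say $4$, has $d_4>3$ while $d_1=d_2=d_3=3$; in particular $d_4=a_{14}+a_{24}+a_{34}\le 9$, all multiplicities are $\le 3$, and $2(a_{12}+a_{13}+a_{23})=9-d_4$ with $d_4\in\{5,7,9\}$. If $d_4=9$ then $a_{12}=a_{13}=a_{23}=0$ and $a_{14}=a_{24}=a_{34}=3$, i.e.\ $M_4$. If $d_4=7$ then exactly one of $a_{12},a_{13},a_{23}$ is $1$ and the others $0$ (up to relabeling $a_{12}=1$), forcing $a_{14}=a_{24}=2$, $a_{34}=3$, i.e.\ $M_3$. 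If $d_4=5$ then $a_{12}+a_{13}+a_{23}=2$: the split $2+0+0$ (a double edge among $\{1,2,3\}$) violates the pair cut separating that pair from the other two vertices, while the split $1+1+0$ (up to relabeling $a_{12}=a_{13}=1$) forces $a_{14}=1$, $a_{24}=a_{34}=2$, i.e.\ $M_2$. In each case $3$-edge-connectivity is immediate to check, completing the proof.

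I expect the only delicate point to be the bookkeeping at the end of Step 3 — verifying that the "bad" distributions of the three multiplicities among $\{1,2,3\}$ are precisely those excluded by the pair-cut inequalities, so that $M_2,M_3,M_4$ are forced uniquely. All the conceptual content is in Steps 1 and 2: Step 1 collapses an arbitrary $3$-edge-connected multigraph to a $4$-vertex one, and Step 2 reduces the remaining search to a short finite computation. (That each $M_i$ does satisfy the hypothesis is an easy separate check, not needed for this implication.)
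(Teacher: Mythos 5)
Your proof is correct, and after the common first step it diverges from the paper's argument in an interesting way. Both proofs reduce to $|V(F)|=4$ with all degrees odd by deleting an edge and counting parity flips (your version, which works with an arbitrary edge and observes that both ends of every edge must be odd, is slightly slicker than the paper's, which picks a vertex $v$ with four odd vertices outside it and deletes an edge at $v$). From there the paper proceeds by showing that $F$ must \emph{contain} a copy of one of seven multigraphs $M_1,\dots,M_7$ via a case analysis on the underlying simple graph ($K_4$, $K_{1,1,2}$, $Z_1$, $C_4$, $K_{1,3}$, $P_4$), then uses the minimality hypothesis once more to upgrade containment to equality and $3$-edge-connectivity to discard $M_5,M_6,M_7$. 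You instead prove a new structural fact --- deleting a parallel pair shows every multiple edge must meet a degree-$3$ vertex --- and then solve directly for the multiplicities $a_{ij}$, using the parity of the pair cuts $d_i+d_j-2a_{ij}$ and the degree equations; I checked the bookkeeping (at most one vertex of degree $>3$, and the three cases $d_4\in\{5,7,9\}$ yielding exactly $M_2,M_3,M_4$ with the $2{+}0{+}0$ split killed by a $2$-edge pair cut) and it is all right. Your route buys a shorter, purely arithmetic endgame that never needs the auxiliary graphs $M_5,M_6,M_7$; the paper's route buys a more uniform use of the minimality hypothesis (containment of any all-odd configuration forces equality) and a picture-level case split that some readers may find easier to verify at a glance.
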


Note that the unfoldments of $M_i$ are the graphs in $\mathcal{H}_i$, $i=1,2,3,4$.

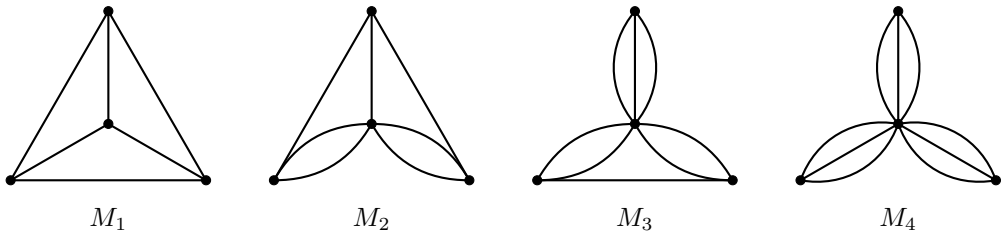
\begin{figure}[h]
\centering
\begin{tikzpicture}[scale=0.1]

\begin{scope}
\draw[fill=black] (0,0) {coordinate (x0)} circle (0.6);
\foreach \x in {1,2,3}
{\draw[fill=black] (\x*120-30:15) {coordinate (x\x)} circle (0.6);
\draw[thick] (x0)--(x\x);}
\draw[thick] (x1)--(x2)--(x3)--(x1);
\node[below] at (0,-10) {$M_1$};
\end{scope}

\begin{scope}[xshift=35cm]
\draw[fill=black] (0,0) {coordinate (x0)} circle (0.6);
\foreach \x in {1,2,3}
{\draw[fill=black] (\x*120-30:15) {coordinate (x\x)} circle (0.6);}
\draw[thick] (x0)--(x1) (x2)--(x1)--(x3);
\foreach \x in {2,3}
{\draw[thick] (x0) to [bend left=30] (x\x);
\draw[thick] (x0) to [bend right=30] (x\x);}
\node[below] at (0,-10) {$M_2$};
\end{scope}

\begin{scope}[xshift=70cm]
\draw[fill=black] (0,0) {coordinate (x0)} circle (0.6);
\foreach \x in {1,2,3}
{\draw[fill=black] (\x*120-30:15) {coordinate (x\x)} circle (0.6);}
\draw[thick] (x0)--(x1) (x2)--(x3);
\foreach \x in {2,3}
{\draw[thick] (x0) to [bend left=30] (x\x);
\draw[thick] (x0) to [bend right=30] (x\x);}
\draw[thick] (x0) to [bend left=40] (x1);
\draw[thick] (x0) to [bend right=40] (x1);
\node[below] at (0,-10) {$M_3$};
\end{scope}

\begin{scope}[xshift=105cm]
\draw[fill=black] (0,0) {coordinate (x0)} circle (0.6);
\foreach \x in {1,2,3}
{\draw[fill=black] (\x*120-30:15) {coordinate (x\x)} circle (0.6);
\draw[thick] (x0)--(x\x);
\draw[thick] (x0) to [bend left=40] (x\x);
\draw[thick] (x0) to [bend right=40] (x\x);}
\node[below] at (0,-10) {$M_4$};
\end{scope}

\end{tikzpicture}    
\caption{Loopless multi-graphs $M_1$, $M_2$, $M_3$ and $M_4$.}\label{M1234}
\end{figure}

\begin{proof}
Suppose that $F$ contains a copy of $M_i$ for some $i=1,\ldots,7$ (see Figures \ref{M1234} and \ref{M567}). Notice that all vertices of $M_i$ have odd degrees. It follows that $F=M_i$ for some $i=1,\ldots,7$. Since $M_5,M_6,M_7$ has an edge-cut with two edges, by $F$ being 3-edge-connected, we further conclude that $F=M_1,M_2,M_3$ or $M_4$, and the assertion is true. So in the following we show that $F$ contains a copy of $M_i$ for some $i=1,\ldots,7$.

\begin{figure}[h]
\centering
\begin{tikzpicture}[scale=0.1]

\begin{scope}
\draw[fill=black] (0,0) {coordinate (x0)} circle (0.6);
\foreach \x in {1,2,3}
{\draw[fill=black] (\x*120-30:15) {coordinate (x\x)} circle (0.6);
\draw[thick] (x0)--(x\x);}
\draw[thick] (x0) to [bend left=40] (x1);
\draw[thick] (x0) to [bend right=40] (x1);
\draw[thick] (x2) to [bend left=20] (x3);
\draw[thick] (x2) to [bend right=20] (x3);
\node[below] at (0,-12) {$M_5$};
\end{scope}

\begin{scope}[xshift=35cm,yshift=3cm]
\foreach \x in {1,2,3,4}
{\draw[fill=black] (\x*90+45:15) {coordinate (x\x)} circle (0.6);}
\draw[thick] (x1)--(x4) (x2)--(x3);
\foreach \x/\y in {1/2,3/4}
{\draw[thick] (x\x) to [bend left=20] (x\y);
\draw[thick] (x\x) to [bend right=20] (x\y);}
\node[below] at (0,-15) {$M_6$};
\end{scope}

\begin{scope}[xshift=70cm,yshift=3cm]
\foreach \x in {1,2,3,4}
{\draw[fill=black] (\x*90+45:15) {coordinate (x\x)} circle (0.6);}
\foreach \x/\y in {1/2,3/4}
{\draw[thick] (x\x)--(x\y);
\draw[thick] (x\x) to [bend left=30] (x\y);
\draw[thick] (x\x) to [bend right=30] (x\y);}
\draw[thick] (x2) to [bend left=20] (x3);
\draw[thick] (x2) to [bend right=20] (x3);
\node[below] at (0,-15) {$M_7$};
\end{scope}

\end{tikzpicture}    
\caption{Loopless multi-graphs $M_5$, $M_6$ and $M_7$.}\label{M567}
\end{figure}
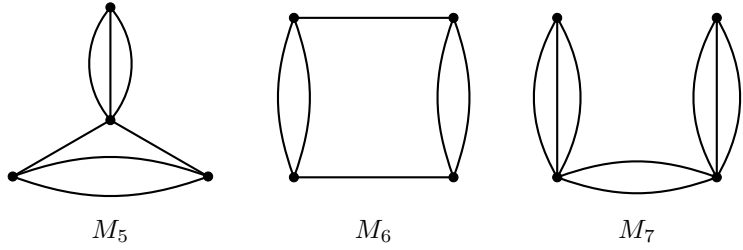

If $F$ has at least five vertices, then we choose a vertex $v\in V(F)$ such that $V(F)\backslash\{v\}$ has at least four vertices of odd degree. Let $e$ be an edge incident with $v$, and let $F'=F-e$. Since $F$ is 3-edge-connected, $F'$ is 2-edge-connected. But then there are more than two vertices of odd degree in $F'$,  contradicting our hypothesis. So we conclude that $F$ has exactly four vertices, and thus all vertices in $F$ are of odd degree.

Let $H$ be the underlying graph of $F$ (that is, be obtained from $F$ by merging all multi-edges). Clearly $H$ is connected, implying that $H=K_4, K_{1,1,2}, Z_1, C_4, K_{1,3}$ or $P_4$. For an edge $e\in E(H)$, we denote by $\mu(e)$ the multiple number of $e$ in $F$. For convenience, we let $v_1,v_2,v_3,v_4$ be the vertices of $F$ in non-increasing order of degree in $H$.

\underline{Case A. $H=K_4$.} In this case, $F$ contains a copy of $M_1$. 

\underline{Case B. $H=K_{1,1,2}$.} Since every vertex of $F$ has an odd degree, either $\mu(v_1v_3)\geq 2$ or $\mu(v_2v_3)\geq 2$. Assume without loss of generality that $\mu(v_1v_3)\geq 2$. We also have $\mu(v_1v_4)\geq 2$ or $\mu(v_2v_4)\geq 2$. If $\mu(v_1v_4)\geq 2$, then $F$ contains a copy of $M_2$; and if $\mu(v_2v_4)\geq 2$, then $F$ contains a copy of $M_6$. 

\underline{Case C. $H=Z_1$.} Since $v_1v_4$ is a cut-edge of $H$ and $F$ is 3-edge-connected, we have $\mu(v_1v_4)\geq 3$. If $\mu(v_2v_3)\geq 2$, then $F$ contains a copy of $M_5$. Now assume that $\mu(v_2v_3)=1$. It follows that $\mu(v_1v_2)\geq 2$ and $\mu(v_1v_3)\geq 2$, and $F$ contains a copy of $M_3$.

\underline{Case D. $H=C_4$.} Set $H=v_1v_2v_3v_4v_1$. If $\mu(v_1v_2)\geq 2$ and $\mu(v_3v_4)\geq 2$, then $F$ contains a copy of $M_6$. So we assume without loss of generality that $\mu(v_3v_4)=1$. It follows that $\mu(v_1v_4)\geq 2$ and $\mu(v_2v_3)\geq 2$, and again $F$ contains a copy of $M_6$.

\underline{Case E. $H=K_{1,3}$.} In this case every edge of $H$ is a cut-edge, and thus has multiple number at least 3. So $F$ contains a copy of $M_4$. 

\underline{Case F. $H=P_4$.} In this case every edge of $H$ is a cut-edge, and thus has multiple number at least 3. So $F$ contains a copy of $M_7$. 
\end{proof}

\subsection{The unfoldments of semi-loopless multi-graphs}

Let $F=F(e_0)$ be a multi-graph with a labeled edge $e_0$. We assume that $F$ has at least two edges, and all edges but possible $e_0$ are not loops. We call such a multi-graph $F$ with a labeled edge $e_0$ a \emph{semi-loopless} multi-graph. By a \emph{colored link} we mean a link in which every inner vertex is assigned red or blue, and the two end-vertices are assigned black.

For a semi-loopless graph $F=F(e_0)$, we define a colored link $L=L(x_0,y_0)$, also called an \emph{unfoldment} of $F$, as follows: Let $u_0,v_0$ be the two end-vertices of $e_0$ (possibly $e_0$ is a loop and $u_0=v_0$). First, for every edge $e\in E(F)\backslash\{e_0\}$ with end-vertices $u,v$, we assign two distinct vertices $u^e,v^e$ and a pure link $L^e$ connecting $u^e$ and $v^e$. Second, for every vertex $u\in V(F)$, we assign a clique $K^u$ with vertex set 
$$\{u^e: e\in E(F)\backslash\{e_0\}\mbox{ and }u\mbox{ is incident with }e\}\cup\{z: u=u_0\mbox{ and }z=x_0,\mbox{ or }u=v_0\mbox{ and }z=y_0\}.$$ 
Third, we assign the inner vertices of all pure links red and the end-vertices of all pure links blue (recall that the two vertices $x_0,y_0$ are black). Finally, let $L$ be the union of the above pure links and cliques $(\bigcup_{e\in E(F)\setminus\{e_0\}}L^e)\cup(\bigcup_{u\in V(F)}K^u)$, or obtained from this union by adding an edge $x_0y_0$. We remark that $F(e_0)$ has infinitely many of unfoldments. 

If $e_0$ is a loop, then both $x_0,y_0$ are contained in the clique $K^{u_0}$, clearly $x_0y_0\in E(L)$. 
If $e_0$ is not a loop, then $K^{u_0}$ and $K^{v_0}$ are two distinct cliques containing $x_0$, $y_0$, respectively. In this case $x_0y_0$ may be or not be an edge of $L$. 
We define the \emph{feasible paths} and \emph{associated vertices} of a colored link as that of a colored graph in Section 3.2.

\begin{lemma}\label{LeUnfoldFe0}
Let $F=F(e_0)$ be a semi-loopless multi-graph and let $L=L(x_0,y_0)$ be an unfoldment of $F$. Then\\
\indent (1) $L$ is claw-free;\\
\indent (2) all red vertices are of degree $2$;\\
\indent (3) every blue vertex has a unique red neighbor;\\
\indent (4) two associated blue vertices have no common blue neighbors or black neighbors;\\
\indent (5) if two associated blue vertices are adjacent, then they have a common red neighbor;\\
\indent (6) every black vertex has no red neighbors, and any two blue neighbors of a black vertex are adjacent.\\
Consequently, if a colored link $L$ satisfies (1)-(6), then it is an unfoldment of some semi-loopless multi-graph.
\end{lemma}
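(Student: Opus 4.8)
The plan is to mirror closely the proof of Lemma~\ref{LeUnfold}, treating the two black end-vertices $x_0,y_0$ as a mild perturbation of the blue/red structure. For the forward direction (that an unfoldment $L$ of a semi-loopless $F=F(e_0)$ satisfies (1)--(6)), statements (2), (3) and the ``non-black'' part of (4)--(5) are identical to the corresponding items in Lemma~\ref{LeUnfold}: a red vertex is still an internal vertex of a unique pure link $L^e$, hence has degree $2$; a blue vertex $u^e$ still has exactly one red neighbor, namely its neighbor along $P^e$; and associated blue vertices $u^e,v^e$ still come from a common edge $e=uv\ne e_0$, so their non-link neighborhoods lie in the vertex-disjoint cliques $K^u,K^v$, giving (4) and (5) for blue common neighbors exactly as before. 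For (6), a black vertex is $x_0$ or $y_0$; by construction $x_0\in K^{u_0}$ and $y_0\in K^{v_0}$ and these are the only cliques containing them, and no pure link has a black internal or even end-vertex, so $x_0,y_0$ have no red neighbors, and any two neighbors of $x_0$ lie in the clique $K^{u_0}$, hence are adjacent — this also supplies the black-neighbor part of (4), since if $z,w$ were associated blue vertices both adjacent to (say) $x_0$, they would both lie in $K^{u_0}$, but associated vertices come from the same edge $e\neq e_0$ and $K^{u_0}$ contains at most one endpoint $u_0^e$ of $L^e$. Claw-freeness (1) then follows, as in Lemma~\ref{LeUnfold}, by checking that $\langle N_L(w)\rangle$ is a disjoint union of at most two cliques for each $w$: for $w$ red or blue the analysis is unchanged, and for $w$ black, $N_L(w)$ is a single clique.

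For the converse, suppose $L=L(x_0,y_0)$ is a colored link satisfying (1)--(6). The strategy is: delete, or rather ``absorb'', the black vertices to produce an ordinary colored graph, apply Lemma~\ref{LeUnfold}, and then re-attach the special edge $e_0$. Concretely, by (6) each of $x_0,y_0$ has only blue (and possibly the other black) neighbors, and its blue neighborhood is a clique. I would form a colored graph $G^+$ from $L$ by: if $x_0y_0\notin E(L)$, adding the edge $x_0y_0$; then recoloring $x_0,y_0$ blue. Properties (2)--(5) for $G^+$ need to be re-verified but each follows from (1)--(6) of $L$: the newly-blue $x_0$ has a unique red neighbor only if we are careful — in fact by (6) it has \emph{no} red neighbor, which forces us instead to append a short pure link. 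The cleaner route is: attach to $x_0$ a brand-new pendant feasible path $x_0\,r_0\,y_0'$ (one fresh red vertex $r_0$, one fresh blue vertex $y_0'$), do the symmetric thing at $y_0$ with a fresh blue $x_0'$, recolor $x_0,y_0$ blue, and add the edge $x_0'y_0'$ together with whatever clique edges are needed so that $\{x_0,x_0'\}$ and $\{y_0,y_0'\}$ behave as the end-vertices of a new link $L^{e_0}$; then $G^+$ is a colored graph satisfying (1)--(5) of Lemma~\ref{LeUnfold}, hence is the unfoldment of some loopless multi-graph $F^+$, and deleting the edge of $F^+$ corresponding to $L^{e_0}$ — equivalently, contracting it if it was a loop — recovers the desired semi-loopless $F=F(e_0)$ whose unfoldment is $L$.

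The main obstacle I anticipate is bookkeeping in this converse direction: one must verify that the ``link'' $L^{e_0}$ manufactured around $x_0,y_0$ is genuinely a pure link (a path of length $\ge 2$ or a triangle) and that gluing it in does not destroy (1)--(5), in particular that $x_0,y_0$ do not acquire a second red neighbor or a forbidden claw, and that if $x_0y_0\in E(L)$ originally then the reconstructed $e_0$ is correctly a loop or a double edge rather than a simple edge. There is also the degenerate possibility $x_0=y_0$ is to be excluded at the outset (links have two distinct labeled end-vertices), and the case where $F$ has exactly the single edge $e_0$ — excluded by the hypothesis that $F$ has at least two edges, so $L$ is not just a single pure link. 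Once these small-case and gluing checks are dispatched, the argument is a routine adaptation of the proof of Lemma~\ref{LeUnfold}, and I would present it in that parallel style, pointing to Lemma~\ref{LeUnfold} for the portions that are verbatim the same.
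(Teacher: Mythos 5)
Your forward direction matches the paper's: (6) is read off directly from the construction, and (1)--(5) are inherited from the argument for Lemma~\ref{LeUnfold}; your explicit derivation of the black-vertex part of (4) is correct and harmless.

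The converse is where you diverge from the paper, and your construction as described has a genuine gap. The paper deletes $x_0,y_0$, applies Lemma~\ref{LeUnfold} to $G'=L-\{x_0,y_0\}$ to obtain $F'$, and then locates $u_0,v_0$ by proving (via a claw argument) that the blue neighborhood of each black vertex is exactly one full component of the clique graph $G'_b$; the labeled edge $e_0$ is then added between the corresponding vertices of $F'$. You instead augment $L$ to a closed colored graph $G^{+}$ by hanging pendant feasible paths off $x_0$ and $y_0$, recoloring them blue, and invoking Lemma~\ref{LeUnfold} on $G^{+}$. The problem is that $G^{+}$ need not satisfy hypothesis (1) of Lemma~\ref{LeUnfold}. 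Condition (6) guarantees that the \emph{blue} neighbors of $x_0$ form a clique, but it says nothing about the possible black neighbor $y_0$: if $x_0y_0\in E(L)$ and $y_0$ is nonadjacent to some blue neighbor $b$ of $x_0$ --- which already happens for $L\in\mathcal{L}_3(x_0,y_0)$, i.e.\ when $e_0$ is a non-loop edge and the edge $x_0y_0$ is present --- then after you attach the fresh red vertex $r_0$ to $x_0$, the set $\{x_0,b,y_0,r_0\}$ induces a claw centered at $x_0$ in $G^{+}$ (since $by_0,br_0,y_0r_0\notin E(G^{+})$). So Lemma~\ref{LeUnfold} cannot be applied to $G^{+}$. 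The same edge $x_0y_0$ also makes $x_0,y_0$ adjacent non-associated blue vertices of $G^{+}$, so the reconstruction inside Lemma~\ref{LeUnfold} would merge them into a single vertex of $F^{+}$, turning $e_0$ into a loop even when it should not be one. Repairing this requires deciding whether to delete $x_0y_0$ before augmenting, and that decision is precisely the question of whether $x_0$ and $y_0$ attach to the same component of $G'_b$ --- i.e.\ the analysis the paper carries out directly. Your closing paragraph correctly lists ``whether the reconstructed $e_0$ is a loop or a simple edge'' as a check to be made, but the construction fails one step earlier than you anticipate: the augmented graph is simply not claw-free in the relevant case.
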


\begin{proof}
The neighbors of $x_0$ are those in $K^{u_0}$ or $y_0$, and the neighbors of $y_0$ are those in $K^{v_0}$ or $x_0$. This proves (6). The assertions (1)-(5) can be deduced similarly as in Lemma \ref{LeUnfold}.

Now we suppose that $L=L(x_0,y_0)$ is a colored link satisfying (1)-(6). Set $G'=L-\{x_0,y_0\}$. Then $G'$ satisfies (1)-(5). By Lemma \ref{LeUnfold}, there is a loopless multi-graph $F'$ such that 
$G'$ is an unfoldment of $F'$. In the following we will construct the desired 
semi-loopless multi-graph $F$
from $F'$ to complete the proof.

Let $V_b,V_r$ be the sets of blue vertices and red vertices of $L$, and we define $G'_b$ as the graph with the vertex set $V_b$ and the edge set $E(G_b')=\{xy\in E(G'): x,y 
\mbox{ are not associated}\}$.
As in the proof of Lemma \ref{LeUnfold}, we see that each component of $G'_b$ is a clique.

By (4) and (6), $x_0$ cannot have neighbors in distinct components of $G'_b$. We claim that if $x_0$ 
has a neighbor in a component of $G'_b$, then $x_0$ is adjacent to all vertices in this component.
 Suppose not, and let $H$ be a component of $G'_b$ such that $x_0$ adjacent to $x\in V(H)$ and non-adjacent to $y\in V(H)$. Let $\bar{x}$ be the unique red neighbor of $x$. Then $x_0\bar{x},y\bar{x}\notin E(G)$, and $\langle \{x,x_0,y,\bar{x}\}\rangle_L$ is an induced claw, contradicting (1).

Furthermore, if $x_0$ has some blue neighbors,
then there is a unique vertex $u_0\in V(F')$ such that $x_0$ is adjacent to 
those vertices $u_0^e$ for all edges $e\in E(F')$ incident with $u_0$.
In fact, those vertcies $u_0^e$ form a clqiue 
which is a component of $G'_b$.
If $x_0$ has no blue neighbors, then let $u_0$ be an extra vertex. 

Similarly, if $y_0$ has 
some blue neigbors, then there is a unique vertex $v_0\in V(F')$ such that $y_0$ is adjacent to those vertices $v_0^e$ for all edges $e\in E(F')$ incident with $v_0$. If $y_0$ has no blue neighbors, then let $v_0$ be an extra vertex.

Now let $F=F(e_0)$ be obtained from $F'$ by adding an labeled edge $e_0$ between $u_0,v_0$. Note that 
if exactly one of $\{u_0,y_0\}$ is not in $F'$
then $e_0$ is a pendant edge of $F$; if both $u_0$ and $v_0$
are not in $F'$ then $e_0$ is an isolated edge in $F$.
We can see that $L$ is an unfoldment of $F$.
\end{proof}

Note that $x_0$ may be not adjacent to $y_0$ in $L$. Let $L^{+}=L+x_0y_0$
if $x_0y_0\notin E(L)$, 
otherwise
$L^{+}=L$.

\begin{lemma}\label{LePropertyFe0L}
Let $F=F(e_0)$ be a semi-loopless multi-graph and let $L=L(x_0,y_0)$ be an unfoldment of $F$.
Then\\
\indent (1) $F$ is $2$-edge-connected if and only if $L^{+}$ is $2$-connected;\\
\indent (2) $F$ is $3$-edge-connected if and only if $L^{+}$ is $2$-connected and every $2$-cut $\{x,y\}$ of $L^{+}$ with both $x,y$ blue or black, is an associated pair;\\ 
\indent (3) $L$ has a Hamilton path from $x_0$ to $y_0$ if and only if $F$ has an Euler tour.
\end{lemma}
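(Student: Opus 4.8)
The plan is to reduce all three parts to the loopless case already settled in Lemmas~\ref{LeUnfold} and~\ref{LePropertyFG}. Write $u_0,v_0$ for the end-vertices of $e_0$ and put $F'=F-e_0$; as in the proof of Lemma~\ref{LeUnfoldFe0}, $G':=L-\{x_0,y_0\}$ is an unfoldment of the loopless multi-graph $F'$, with $x_0\in K^{u_0}$ and $y_0\in K^{v_0}$. If $e_0$ is a loop, then $u_0=v_0$, the black vertices $x_0,y_0$ both lie in the single clique $K^{u_0}$ (so $x_0y_0\in E(L)=E(L^{+})$), and $F$ is $2$- (resp.\ $3$-) edge-connected exactly when $F'$ is while $F$ has an Euler tour exactly when $F'$ does; so each of (1)--(3) follows from Lemma~\ref{LePropertyFG} applied to $G'$ together with a routine check of how attaching the two vertices $x_0,y_0$ to the clique $K^{u_0}$ affects $2$-connectedness, $2$-cuts, and Hamilton paths. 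From now on I assume $e_0$ is not a loop.

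For (1) and (2) the main device is an auxiliary \emph{loopless} unfoldment built from $L^{+}$: add one new vertex $r$ adjacent to $x_0$ and $y_0$ only, colour it red, and recolour $x_0,y_0$ blue; call the result $\widetilde G$. Using Lemma~\ref{LeUnfoldFe0}(6) (black vertices have no red neighbours) and that $e_0$ is not a loop --- so $K^{u_0}\cap K^{v_0}=\emptyset$ and hence $\{x_0,y_0\}$ has no common neighbour other than $r$; here condition~(4) of Lemma~\ref{LeUnfoldFe0}, which forbids even common \emph{black} neighbours, is exactly what keeps the old associated pairs satisfying property~(4) once $x_0,y_0$ are recoloured --- one verifies that $\widetilde G$ satisfies (1)--(5) of Lemma~\ref{LeUnfold}, so $\widetilde G$ is an unfoldment of a loopless multi-graph, namely of $F$ itself (the feasible link on $\{x_0,r,y_0\}$, a triangle, collapses back to the ordinary edge $e_0=u_0v_0$), and $\widetilde G$ is not a cycle since $F$ has at least two edges. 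Because $r$ has degree $2$ with its two neighbours adjacent, $\widetilde G$ is $2$-connected if and only if $L^{+}$ is, and --- $r$ never lying in a minimal $2$-cut --- the $2$-cuts of $\widetilde G$ are exactly those of $L^{+}$; moreover the blue vertices of $\widetilde G$ are precisely the blue-or-black vertices of $L^{+}$, and the associated pairs of $\widetilde G$ are those of $L^{+}$ together with $\{x_0,y_0\}$ (which, whenever it is a $2$-cut of $\widetilde G$, is associated there). Feeding these dictionaries into Lemma~\ref{LePropertyFG}(1),(2) for $F$ and $\widetilde G$ gives (1) and (2).

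For (3) I would argue directly, via the trail-to-path correspondence used in the proofs of Lemma~\ref{LePropertyFG}(3),(4), now with the labelled edge $e_0$ contributing no link. If $F$ has an Euler tour, all its degrees are even, so $\deg_{F'}(u_0)$ and $\deg_{F'}(v_0)$ are odd, $F'$ is connected, and cutting the tour at $e_0$ gives an Euler trail of $F'$ from $v_0$ to $u_0$; unfolding this trail as in Lemma~\ref{LePropertyFG}(3) produces a Hamilton path of $G'$ from a blue vertex of $K^{v_0}$ to a blue vertex of $K^{u_0}$ (it covers all of $G'$ since the trail covers all edges of $F'$), and prepending $x_0$ and appending $y_0$ yields a Hamilton path of $L$ from $x_0$ to $y_0$. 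Conversely, if $L$ has a Hamilton path $P$ from $x_0$ to $y_0$, then (as $|V(L)|\ge3$) $P$ does not use the edge $x_0y_0$, and deleting its ends $x_0,y_0$ leaves a Hamilton path of $G'$ between a neighbour of $x_0$ in $K^{u_0}$ and a neighbour of $y_0$ in $K^{v_0}$, both blue; decomposing this path along its feasible links exactly as in Lemma~\ref{LePropertyFG}(3) gives an Euler trail of $F'$ from $u_0$ to $v_0$, and adding $e_0$ turns it into an Euler tour of $F$.

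I expect the real work to lie in part~(2): one must keep track of which end-vertex of each cut-edge of $F$ falls on which side of the corresponding $2$-cut of $L^{+}$, handle the degenerate subcases in which $u_0$ or $v_0$ is an ``extra'' vertex of $F'$ (so that $e_0$ is a pendant or isolated edge and $x_0$ or $y_0$ has no blue neighbour), and confirm that permitting the cut-vertices to be the black vertices $x_0,y_0$ creates no new bad $2$-cut beyond what is already controlled. The auxiliary graph $\widetilde G$ is designed precisely so that this bookkeeping is discharged by the already-proved Lemma~\ref{LePropertyFG}, leaving only the ``local'' verification that $\widetilde G$ satisfies (1)--(5) of Lemma~\ref{LeUnfold} and that passing between $L^{+}$ and $\widetilde G$ preserves the relevant structure.
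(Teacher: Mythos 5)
Your proposal is correct. Part (3) is essentially the paper's argument: the paper sets $P^{e_0}=x_0y_0$, so an Euler tour of $F$ unfolds to a Hamilton cycle of $L^{+}$ through the edge $x_0y_0$, whose deletion is the Hamilton path of $L$ from $x_0$ to $y_0$; your version cuts the tour at $e_0$ and works in $G'=L-\{x_0,y_0\}$ and $F'=F-e_0$, which is the same correspondence in different clothing. For (1) and (2), however, you take a genuinely different route. The paper merely extends the trail-to-path dictionary by letting the edge $x_0y_0$ play the role of the feasible path of $e_0$, observes that $\{x_0,y_0\}$ is not a $2$-cut of $L^{+}$, and asserts that (1) and (2) ``can be proved similarly'' to Lemma~\ref{LePropertyFG}. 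You instead manufacture an honest loopless unfoldment $\widetilde G$ of $F$ by planting a dummy red vertex $r$ on $x_0,y_0$ and recolouring them blue, verify conditions (1)--(5) of Lemma~\ref{LeUnfold} (where Lemma~\ref{LeUnfoldFe0}(4),(6) and the disjointness $K^{u_0}\cap K^{v_0}=\emptyset$ are exactly the facts needed), and then invoke Lemma~\ref{LePropertyFG} as a black box after translating $2$-cuts and associated pairs between $L^{+}$ and $\widetilde G$. This buys a modular proof of (1)(2) in place of the paper's ``similarly,'' at the cost of the translation dictionary and of handling the loop case $u_0=v_0$ separately, which you do correctly since a loop affects neither edge-connectivity nor degree parity. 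The remaining delicate points --- that $\{x_0,y_0\}$ is never a $2$-cut of a $2$-connected $L^{+}$ (because $F-e_0$ stays connected), that recolouring creates no new feasible paths among old blue vertices (black vertices have no red neighbours), that $\widetilde G$ is not a cycle, and the degenerate pendant or isolated $e_0$ cases --- are all flagged in your write-up and all check out.
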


\begin{proof}
For every edge $e\in E(F)\backslash\{e_0\}$, let $P^e$ be the feasible path as in Lemma \ref{LePropertyFG}. We also set $P^{e_0}=x_0y_0$, and denote $u_0^{e_0}=x_0$, $v_0^{e_0}=y_0$. Now for every trail $T$ of $F$ from $u$ to $v$, we can get a path $P^T$ of $L^{+}$ from $u^{e_1}$ to $v^{e_k}$, where $e_1,e_k$ are the first and last edges of $T$, such that $P^T$ contains all paths $P^e$ for $e\in E(T)$. Moreover, if $T$ is a closed trail, then $L^{+}$ has a cycle $C^T$ containing all paths $P^e$ for $e\in E(T)$. Also notice that $\{x_0,y_0\}$ is not a 2-cut of $L^{+}$. Thus (1)(2) can be proved similarly as in Lemma \ref{LePropertyFG}. 

Now we prove (3). Suppose first that $F$ has an Euler tour $T$. Then $C^T$ is a Hamilton cycle of $L^{+}$ containing $x_0y_0$, and $P=C^T-x_0y_0$ is a Hamilton path of $L$ from $x_0$ to $y_0$.

Suppose now that $L$ has a Hamilton path $P$ from $x_0$ to $y_0$. Then $P$ contains all feasible paths of $L$, and we can assume that $P=x_0x_1P_1y_1x_2P_2y_2\ldots x_kP_ky_ky_0$, where $P_i$ is a feasible path connecting $x_i$ and $y_i$. By a similar analysis as in Lemma \ref{LePropertyFG}, we can prove that $F$ has a trail $T$ from $u_0$ to $v_0$ covering all edges but $e_0$, and $T+e_0$ is an Euler tour of $F$.
\end{proof}

\begin{lemma}\label{LeSubgraphFe0}
Let $F=F(e_0)$ be a semi-loopless multi-graph and let $L=L(x_0,y_0)$ be an unfoldment of $F$. If $F'$ is a proper subgraph of $F$ containing $e_0$, then $L$ has a proper induced subgraph $L'$ containing $x_0,y_0$, which is an unfoldment of $F'$.
\end{lemma}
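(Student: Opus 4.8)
The plan is to mimic the one-line proof of Lemma~\ref{LeSubgraphF}, keeping careful track of the labeled edge $e_0$ and the black end-vertices $x_0,y_0$. Explicitly, I would take $L'$ to be the subgraph of $L$ induced by $\{x_0,y_0\}$ together with $V(L^e)$ for all $e\in E(F')\setminus\{e_0\}$, equipped with the coloring inherited from $L$ (so that $x_0,y_0$ remain black). The claim is that $L'$ is a proper induced subgraph of $L$ containing $x_0,y_0$, and that it is an unfoldment of the semi-loopless multi-graph $F'=F'(e_0)$.

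First I would dispose of the easy points. That $L'$ contains $x_0,y_0$ is immediate from the construction. For properness: since $e_0\in E(F')$ while $F'$ is a proper subgraph of $F$, there is an edge $e\in E(F)\setminus E(F')$ with $e\neq e_0$; the vertices assigned to $e$ when building $L$ (namely the end-vertices $u^e,v^e$ of $L^e$ and its inner vertices) are distinct from all vertices assigned to the other edges and from $x_0,y_0$, so they lie in $V(L)\setminus V(L')$, whence $L'$ is a proper induced subgraph of $L$.

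The substantive step is verifying that $L'$ is an unfoldment of $F'(e_0)$. I would run the construction of Section~3.3 on $F'(e_0)$ and compare it with $L'$: it reuses exactly the pure links $L^e$ for $e\in E(F')\setminus\{e_0\}$ that already sit inside $L$, and for each $u\in V(F')$ it builds a clique on $\{u^e:e\in E(F')\setminus\{e_0\},\ u\ \text{incident with}\ e\}\cup\{x_0\ \text{if}\ u=u_0\}\cup\{y_0\ \text{if}\ u=v_0\}$, which is precisely $V(K^u)\cap V(L')$; since $K^u$ is a clique of $L$, the subgraph of $L$ induced on this set is exactly that clique. Because every edge of $L$ lies inside some $L^e$ or some $K^u$ (apart from the possible edge $x_0y_0$), passing to the induced subgraph on $V(L')$ recovers precisely $\bigl(\bigcup_{e\in E(F')\setminus\{e_0\}}L^e\bigr)\cup\bigl(\bigcup_{u\in V(F')}\langle V(K^u)\cap V(L')\rangle_L\bigr)$, possibly with $x_0y_0$ added---and $x_0y_0\in E(L')$ if and only if $x_0y_0\in E(L)$, which is compatible with the two permitted forms of an unfoldment (in particular, if $e_0$ is a loop then $x_0,y_0$ both lie in the clique over $u_0$, forcing $x_0y_0$ exactly as in $L$). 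The inherited coloring is the correct one: inner vertices of each $L^e$ are red, their end-vertices blue, and $x_0,y_0$ black. Hence $L'$ is an unfoldment of $F'(e_0)$.

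The one spot requiring genuine care is the clique bookkeeping at the at most two vertices $u\in\{u_0,v_0\}$ where the black vertices $x_0,y_0$ enter, together with the degenerate cases in which $e_0$ is a loop or $F'$ still has an isolated vertex (which contributes an empty clique and may simply be deleted). A cleaner alternative, which I would likely adopt, bypasses the direct comparison: by Lemma~\ref{LeSubgraphF}, $L'-\{x_0,y_0\}$ is an unfoldment of $F'-e_0$ (it is an induced subgraph of the unfoldment $L-\{x_0,y_0\}$ of $F-e_0$); one then notes that $L'$ inherits conditions (1)--(6) of Lemma~\ref{LeUnfoldFe0} from $L$, since $L'$ is induced, and invokes the converse part of that lemma, tracing its proof to identify the resulting semi-loopless multi-graph as $(F'-e_0)+e_0=F'$.
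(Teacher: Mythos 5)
Your proposal is correct and takes essentially the same approach as the paper: the paper's entire proof is the single sentence defining $L'$ as the subgraph induced by $x_0,y_0$ and the vertices of the pure links $L^e$ for $e\in E(F')\setminus\{e_0\}$, which is exactly your construction. Your additional verification of properness and of the unfoldment property simply makes explicit what the paper leaves to the reader.
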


\begin{proof}
The subgraph $L'$ is induced by $x_0,y_0$ and all the vertices of all the pure links $L^e$ for $e\in E(F')\backslash\{e_0\}$.
\end{proof}

\begin{lemma}\label{LeMinimalFe0}
Let $F=F(e_0)$ be a $3$-edge-connected semi-loopless multi-graph with at least two vertices of odd degree. If every $2$-edge-connected proper subgraph of $F$ containing $e_0$ has no vertices of odd degree, then $F$ is isomorphic to $N_1=N_1(e_0)$ or $N_2=N_2(e_0)$, as shown in Figure \ref{N12}.
\end{lemma}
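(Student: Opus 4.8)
The plan is to mirror the structure of the proof of Lemma \ref{LeMinimalF}, but adapted to the semi-loopless setting where we must track the special edge $e_0$. First I would argue that $F$ has very few vertices. Since $F$ is $3$-edge-connected, removing any single edge $e\neq e_0$ leaves a graph $F-e$ that is $2$-edge-connected and still contains $e_0$; by hypothesis such a graph has no odd-degree vertices. But $F-e$ differs from $F$ only in the degrees of the two endpoints of $e$, so $F$ has at most two vertices of odd degree — and by hypothesis it has at least two, hence exactly two, and they must be precisely the endpoints of $e$ for \emph{every} choice of $e\in E(F)\setminus\{e_0\}$. If $F$ had an edge $e\neq e_0$ not incident to both odd-degree vertices, we would get a contradiction; so every edge other than $e_0$ joins the two odd-degree vertices $u_0,v_0$. (One must also handle the degenerate possibilities where $e_0$ is a loop or a pendant edge, and where $F$ has isolated-looking structure; here I expect the $3$-edge-connectivity to force things to be small.) Consequently $F$ has only the two vertices $u_0$ and $v_0$, plus possibly one more if $e_0$ is a pendant edge or loop — in any case a bounded number of vertices.

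Next I would enumerate the underlying simple graph $H$ of $F$ on these vertices, exactly as in Case A--F of Lemma \ref{LeMinimalF}, but now keeping careful track of the location of $e_0$. With essentially two vertices $u_0,v_0$ both of odd degree and all non-$e_0$ edges between them, the multiplicity $\mu(u_0v_0)$ of the $u_0v_0$-bond must have a specific parity relative to whether $e_0$ is among those edges; and $3$-edge-connectivity forces $\mu(u_0v_0)\ge 3$ (or $\ge 2$ together with $e_0$, etc.). I would then check, using the minimality hypothesis, that any extra edges force a forbidden proper subgraph: for instance, a proper $2$-edge-connected subgraph containing $e_0$ with an odd-degree vertex would violate the hypothesis, so $F$ cannot be too large. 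Pinning down the cases should leave exactly the two configurations $N_1(e_0)$ and $N_2(e_0)$ of Figure \ref{N12} (presumably: $e_0$ together with a triple edge between its endpoints in the loop/pendant flavour, versus $e_0$ as a non-loop edge parallel to a triple edge, or similar small multigraphs). I would verify directly that $N_1,N_2$ satisfy the stated minimality condition (every proper $2$-edge-connected subgraph containing $e_0$ is a single edge or an even multigraph), and that they are $3$-edge-connected with exactly two odd-degree vertices.

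The main obstacle I anticipate is the bookkeeping around the edge $e_0$ and its possible degeneracies (loop, pendant, or a genuine edge between two distinct degree-$\ge 3$ vertices), since $e_0$ is \emph{exempt} from the loopless requirement and is \emph{forced} to appear in every proper subgraph we test — this asymmetry changes which deletions are legal compared to Lemma \ref{LeMinimalF}. In particular, one cannot freely delete $e_0$, so the reduction ``pick a vertex whose deletion still leaves four odd vertices'' has to be replaced by a more careful argument using only deletions of edges $e\neq e_0$, and one must separately rule out the case where $F$ has three vertices (one of which is an endpoint of a pendant $e_0$) by exhibiting the forbidden subgraph. Once the vertex set is shown to be essentially $\{u_0,v_0\}$, the remaining multiplicity analysis is short and parallels Cases B and C above.
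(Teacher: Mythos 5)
Your proposal is correct and follows essentially the same route as the paper: both arguments delete an edge $e\neq e_0$, observe that the resulting $2$-edge-connected proper subgraph containing $e_0$ must have no odd-degree vertices (so the odd vertices of $F$ are exactly the endpoints of $e$), conclude via $3$-edge-connectivity that $V(F)$ consists of just the two odd vertices, and then let $3$-edge-connectivity plus the minimality hypothesis pin the bond at exactly three parallel edges, splitting on whether $e_0$ is a loop. The only slip is your parenthetical guess at $N_2$ --- it is a triple edge with $e_0$ \emph{among} the three parallel edges, not $e_0$ parallel to a triple edge (which would make both degrees even) --- but this hedged guess does not affect the argument.
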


\begin{figure}[h]
\centering
\begin{tikzpicture}[scale=0.1]

\begin{scope}
\draw[fill=black] (0,0) {coordinate (x1)} circle (0.6);
\draw[fill=black] (20,0) {coordinate (x2)} circle (0.6);
\draw[thick] (x1)--(x2);
\draw[thick] (x1) to [bend left=30] (x2);
\draw[thick] (x1) to [bend right=30] (x2);
\draw[thick] (25,0) circle (5);
\node[right] at (30,0) {$e_0$};
\node[below] at (15,-5) {$N_1(e_0)$};
\end{scope}

\begin{scope}[xshift=40cm]
\draw[fill=black] (0,0) {coordinate (x1)} circle (0.6);
\draw[fill=black] (20,0) {coordinate (x2)} circle (0.6);
\draw[thick] (x1)--(x2);
\draw[thick] (x1) to [bend left=30] (x2);
\draw[thick] (x1) to [bend right=30] (x2);
\node[above] at (10,3) {$e_0$};
\node[below] at (10,-5) {$N_2(e_0)$};
\end{scope}

\end{tikzpicture}    
\caption{Semi-loopless multi-graphs $N_1(e_0)$ and $N_2(e_0)$.}\label{N12}
\end{figure}
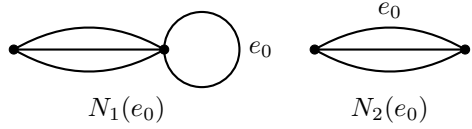

\begin{proof}
Suppose that $F$ contains a copy of $N_i$ for some $i=1,2$. Notice that both vertices of $N_i$ have odd degrees. It follows that $F=N_i$. So in the following we show that $F$ contains a copy of $N_i$ for some $i=1,2$.

If $F$ has at least three vertices, then we choose a vertex $v\in V(F)$ such that $V(F)\backslash\{v\}$ has at least two vertices of odd degree. Let $e$ be an edge incident to $v$ with $e\neq e_0$, and let $F'=F-e$. Since $F$ is 3-edge-connected, we have that $F'$ is 2-edge-connected. But then $F'$ contains some vertices of odd degree, 
contradicting our hypothesis. So we conclude that $F$ has exactly two vertices, and thus both vertices are of odd degree.

Let $V(F)=\{u_1,u_2\}$. Since $F$ is 3-edge-connected, there are at least three edges between $u_1,u_2$. If $e_0$ is a loop, say with the end-vertex $u_2$, then $F$ contains a copy of $N_1$. If $e_0$ is not a loop, then $u_1,u_2$ are the end-vertices of $e_0$, and $F$ contains a copy of $N_2$. 
\end{proof}

Let $\mathcal{L}_3(x,y)$ be the family of links from $x$ to $y$ that obtained from a link in $\mathcal{L}_2(x,y)$ by adding an edge $xy$, see Figure \ref{L3}. We remark that the unfoldments of $N_1(e_0)$ are the links in $\mathcal{L}_1(x_0,y_0)$, and the unfoldments of $N_2(e_0)$ are the links in $\mathcal{L}_2(x_0,y_0)\cup\mathcal{L}_3(x_0,y_0)$.

\begin{figure}[h]
\centering
\begin{tikzpicture}[scale=0.08]
\draw[fill=black] (-40,20){coordinate (a)} circle (0.8); \node[right] at (-41,16) {$a$};
\draw[fill=black] (-40,-20){coordinate (a')} circle (0.8); \node[right] at (-41,-16) {$a'$};
\draw[fill=black] (-20,10){coordinate (b)} circle (0.8); \node[right] at (-20,8) {$b$};
\draw[fill=black] (-20,-10){coordinate (b')} circle (0.8); \node[right] at (-20,-8) {$b'$};
\draw[fill=black] (-0,15){coordinate (x)} {node[right] {$x$}} circle (0.8);
\draw[fill=black] (-0,-15){coordinate (y)} {node[right] {$y$}} circle (0.8);
\draw[thick] (x)--(a)--(b)--(x); \draw[thick] (y)--(a')--(b')--(y);
\draw[thick] (x)--(y);
\draw[red,dotted,thick] (a)--(a') (b)--(b'); 
\end{tikzpicture}
\caption{Class of links $\mathcal{L}_3(x,y)$.}\label{L3}
\end{figure}
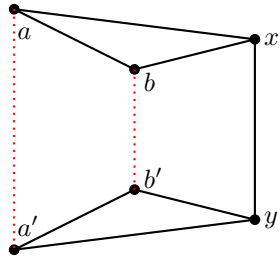

\section{Proof of Theorem \ref{Th3}}

Let $G$ be a minimal 2-connected non-cycle claw-free graph without spanning $\varTheta$-subgraphs. Obviously, $G$ is non-hamiltonian. We first show that $G$ has a 2-cut.

\begin{claim}\label{Cl2Connected}
$\kappa(G)=2$.
\end{claim}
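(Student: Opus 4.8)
The plan is a proof by contradiction: suppose $\kappa(G)\ge 3$. Since $G$ is $2$-connected, claw-free, non-cycle and has no spanning $\varTheta$-subgraph, it is non-hamiltonian, because a Hamilton cycle of a non-cycle graph together with a chord is a spanning $\varTheta$-subgraph. Hence, by Theorem~\ref{Th A}, $G$ is not locally connected, so some vertex $v$ has $\langle N(v)\rangle$ disconnected. Claw-freeness forces $\langle N(v)\rangle$ to have at most two components (three would give an independent triple, hence a claw at $v$) and each component to be a clique (a non-adjacent pair in one component together with a vertex of the other would induce a claw at $v$); thus $\langle N(v)\rangle=\langle A\rangle\cup\langle B\rangle$ with $A,B$ disjoint non-empty cliques and no edges between $A$ and $B$. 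Since $\deg(v)=|A|+|B|\ge\kappa(G)\ge3$, we may assume $|A|\ge2$. Note that $A\cup\{v\}$ and $B\cup\{v\}$ are cliques, and put $W=V(G)\setminus N[v]$.

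I would then record two reductions. \emph{(R1)} For every vertex $w$, the graph $G-w$ is $2$-connected (connectivity drops by at most one) and is a proper induced subgraph of $G$, so by minimality $G-w$ is a cycle or has a spanning $\varTheta$-subgraph; and $G-w$ cannot be a cycle $C$, since then $N_G(w)$ would be a set of at least three vertices of $C$, claw-freeness at $w$ (with $C$ induced in $G$) would provide two $C$-consecutive vertices of $N_G(w)$, and replacing the edge of $C$ between them by the two edges through $w$ would make $G$ hamiltonian. Thus $G-w$ has a spanning $\varTheta$-subgraph for every $w$. \emph{(R2)} If $T$ is a spanning $\varTheta$-subgraph of $G-w$ and some edge $xy$ of $T$ has $x,y\in N_G(w)$, then subdividing $xy$ with $w$ gives a spanning $\varTheta$-subgraph of $G$; hence, under our assumptions, no edge of such a $T$ joins two vertices of $N_G(w)$.

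Now I fix $a\in A$ and take a spanning $\varTheta$-subgraph $T$ of $G-a$. Here $N_G(a)=\{v\}\cup(A\setminus\{a\})\cup(N_G(a)\cap W)$, the neighbours of $v$ in $G-a$ are $(A\setminus\{a\})\cup B$, and by \emph{(R2)} the set $N_G(a)$ is independent in $T$; hence $v$ can be $T$-adjacent only to vertices of $B$, and since $v$ has degree at least $2$ in the $\varTheta$-graph $T$ we conclude $|B|\ge2$ and there are $b_1,b_2\in B$ with $vb_1,vb_2\in E(T)$. The same reasoning shows that every $a'\in A\setminus\{a\}$ has all its $T$-neighbours in $W$; and claw-freeness applied at such an $a'$ (whose two $T$-neighbours $w,w'\in W$ are both non-neighbours of the common neighbour $v$) forces $w\sim w'$, so $w,a',w'$ span a triangle of $G$. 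Likewise $N_G(a)\cap W$ is a clique.

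The main obstacle is to convert this rigidity into a contradiction. The route I would take is: since $b_1b_2\in E(G)$, deleting $v$ from $T$ and adding $b_1b_2$ yields a spanning $\varTheta$-subgraph $T^{-}$ of $G-\{a,v\}$ (if $v$ is a branch vertex of $T$, one first replaces the configuration at its three $B$-neighbours, a case to dispatch separately); it then suffices to find an edge $xy$ of $T^{-}$ with $x\in N_G(a)$ and $y\in N_G(v)$, since then replacing $xy$ by the path $x\,a\,v\,y$ gives a spanning $\varTheta$-subgraph of $G$. Using the triangles at the vertices of $A\setminus\{a\}$ one reroutes $T$ along these triangles to create such an edge; the delicate part — where I expect the bulk of the work to lie — is to show that the only way this can fail is that some two vertices of $G$ separate $a$ (together with the clique $A$ and its attachments in $W$) from the rest of $G$, contradicting $\kappa(G)\ge3$. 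Organising the final analysis according to whether $a$ and the chosen vertices of $B$ lie on a common path of $T$ then completes the contradiction, so $\kappa(G)=2$.
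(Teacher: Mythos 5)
Your opening is sound and coincides with the paper's: assume $\kappa(G)\ge 3$, note $G$ is non-hamiltonian, invoke Theorem~\ref{Th A} to get a vertex $v$ whose neighbourhood splits into two cliques $A,B$, and observe that deleting a vertex (or a set of vertices) leaves a graph that, by minimality, has a spanning $\varTheta$-subgraph which one then tries to augment. Your reductions (R1) and (R2) are correct, as are the structural consequences you extract from them (the $T$-neighbours of $v$ lie in $B$, the $T$-neighbours of each $a'\in A\setminus\{a\}$ lie in $W$ and span triangles with $a'$).

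However, the proof is not complete: everything after ``The main obstacle is to convert this rigidity into a contradiction'' is a plan, not an argument. You yourself flag that ``the delicate part --- where I expect the bulk of the work to lie'' is to show that the failure of your rerouting forces a $2$-cut, and that step is exactly what is missing; nothing in the proposal shows that an edge $xy\in E(T^-)$ with $x\in N_G(a)$, $y\in N_G(v)$ can always be manufactured, nor is the ``rerouting along triangles'' specified. (There are also unhandled corner cases in forming $T^-$, e.g.\ when $v$ is a branch vertex of $T$ or lies on a path of length $2$ whose endpoints are already joined by another length-$1$ path of $T$.) The paper sidesteps this difficulty by a different choice of what to delete: instead of a single vertex $a\in A$, it fixes a component $H$ of $G-v-N(v)$ and deletes $A'=N_{A_i}(H)$ minus one or two chosen attachment vertices $z_i$. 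This makes $\{z_1,z_2\}$ (or $\{y_1,z_1\}$) a $2$-cut of $G-A'$, so Lemma~\ref{LeCutxy} \emph{guarantees} that the spanning $\varTheta$-subgraph of $G-A'$ uses an edge $z_iz_i'$ with both ends in the clique $\{v\}\cup A_i$, onto which the deleted clique $A'$ can be threaded; $3$-connectivity is used only to ensure $|N_A(H)|\ge 3$, hence $A'\ne\emptyset$. Your single-vertex deletion gives no such guaranteed edge --- indeed (R2) tells you it does not exist --- which is why your route runs into the unproved ``delicate part''. As it stands the proposal has a genuine gap at its concluding step.
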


\begin{proof}
Suppose that $G$ is 3-connected. Since $G$ is claw-free and non-hamiltonian, by Theorem~\ref{Th A}, there exists a vertex $x\in V(G)$ such that $\langle N(x)\rangle$ is disconnected. It follows that $\langle N(x)\rangle$ consists of two vertex-disjoint cliques. Let $A_1,A_2$ be the vertex set of the two cliques, and set $A=N(x)=A_1\cup A_2$. Let $H$ be a component of $G-x-A$. Since $G$ is 3-connected, $|N_A(H)|\geq 3$. For every vertex $z\in N_{A_i}(H)$, by $G$ being claw-free, we see that $N(z)\subseteq\{x\}\cup A_i\cup V(H)$ and $\langle N_H(z)\rangle$ is a clique. 

\underline{Case A. $H$ is non-separable.} First suppose that $N_A(H)\subseteq A_i$ for some $i=1,2$, say $A_1$. If $H$ is nontrivial, then there exist two vertices $z_1,z_2\in A_1$ such that $y_1z_1,y_2z_2\in E(G)$ for distinct $y_1,y_2\in V(H)$. If $H$ is trivial, then let $z_1,z_2$ be two arbitrary vertices in $N_{A_1}(H)$. Set $A'=N_{A_1}(H)\backslash\{z_1,z_2\}$. It follows that $G-A'$ is 2-connected and $\{z_1,z_2\}$ is a 2-cut of $G-A'$. Since $G$ is 3-connected, we see that $A'\neq\emptyset$. By the minimality of $G$, $G-A'$ contains a spanning $\varTheta$-subgraph $T$ (or $G-A'$ is a cycle, in which case we also denote $T=G-A'$). By Lemma \ref{LeCutxy}, $T$ contains an edge $z_1z'_1$ for some $z'_1\in\{x\}\cup A_1\backslash A'$. Notice that $\langle \{z_1,z'_1\}\cup A'\rangle$ is a clique. We can replace $z_1z'_1$ in $T$ with a path from $z_1$ to $z'_1$ passing through all vertices in $A'$ to obtain a spanning $\varTheta$-subgraph (or a Hamilton cycle) of $G$, a contradiction. 

Suppose now that $N_{A_1}(H)\neq\emptyset$ and $N_{A_2}(H)\neq\emptyset$. If $H$ is nontrivial, then there exist two vertices $z_1\in A_1$, $z_2\in A_2$ such that $y_1z_1,y_2z_2\in E(G)$ for distinct $y_1,y_2\in V(H)$. If $H$ is trivial, then let $z_1\in N_{A_1}(H)$, $z_2\in N_{A_2}(H)$ be arbitrary. Set $A'_i=N_{A_i}(H)\backslash\{z_i\}$, $i=1,2$, and $A'=A'_1\cup A'_2$. It follows that $G-A'$ is 2-connected and $\{z_1,z_2\}$ is a 2-cut of $G-A'$. By the minimality of $G$, $G-A'$ contains a spanning $\varTheta$-subgraph $T$ (or $G-A'=T$ is a cycle). By Lemma \ref{LeCutxy}, $T$ contains two edges $z_iz'_i$ for some $z'_i\in\{x\}\cup A_i\backslash A'_i$, $i=1,2$. Notice that $\langle \{z_i,z'_i\}\cup A'_i\rangle$ is a clique. We can replace $z_iz'_i$ in $T$ with a path from $z_i$ to $z'_i$ passing through all vertices in $A'_i$, $i=1,2$, to obtain a spanning $\varTheta$-subgraph (or a Hamilton cycle) of $G$, a contradiction. 

\underline{Case B. $H$ is separable.} First suppose that $H$ has an end-block $B_1$ such that there are no edges from the inner vertices of $B_1$ to $A_i$ for some $i=1,2$. Let $y_1$ be the cut-vertex of $H$ contained in $B_1$ and assume without loss of generality that $N_A(B_1-y_1)\subseteq A_1$. Let $z_1\in N_{A_1}(B_1-y_1)$ and $A'=N_{A_1}(B_1-y_1)\backslash\{z_1\}$. It follows that $G-A'$ is 2-connected and $\{y_1,z_1\}$ is a 2-cut of $G-A'$. By the minimality of $G$, $G-A'$ contains a spanning $\varTheta$-subgraph $T$ (or $G-A'=T$ is a cycle). By Lemma \ref{LeCutxy}, $T$ contains an edge $z_1z'_1$ for some $z'_1\in\{x\}\cup A_1\backslash A'$. By a similar analysis as above, we can find a spanning $\varTheta$-subgraph (or a Hamilton cycle) of $G$, a contradiction. 

Suppose now that for every end-block $B$ of $H$, there exist edges from some inner-vertices of $B$ to both $A_1$ and $A_2$. Let $B_i$, $i=1,2$, be two end-blocks of $H$, $y_i$ be the cut-vertex of $H$ contained in $B_i$, and let $z_i\in N_{A_i}(B_i-y_i)$. Note that every vertex in
 $A$ cannot have neighbors in both $B_1-y_1$ and $B_2-y_2$; for otherwise $G$ contains a claw. Set $A'_i=N_{A_i}(B_i-y_i)\backslash\{z_i\}$, $i=1,2$ and $A'=A'_1\cup A'_2$. It follows that $G-A'$ is 2-connected and $\{y_i,z_i\}$, $i=1,2$, are two 2-cuts of $G-A'$. By the minimality of $G$, $G-A'$ contains a spanning $\varTheta$-subgraph $T$ (or $G-A'=T$ is a cycle), and $T$ contains two edges $z_iz'_i$ for some $z'_i\in\{x\}\cup A_i\backslash A'_i$, $i=1,2$. By a similar analysis as above, we can find a spanning $\varTheta$-subgraph (or a Hamilton cycle) of $G$, a contradiction. 
\end{proof}

By Claim \ref{Cl2Connected}, $G$ has a 2-cut. For any 2-cut $\{x,y\} $ of $G$, we see that $G-\{x,y\}$ has exactly two components since $G$ is claw-free. Let $H_1,H_2$ be the two components of $G-\{x,y\}$. Note that each of $x,y$ has neighbors in both $H_1$ and $H_2$. The subgraph $L=L(x,y)$ induced by $\{x,y\}\cup V(H_1)$ is a link of $G$; and the subgraph induced by $\{x,y\}\cup V(H_2)$ is the co-link of $L$, denoted as $L^*=L^*(x,y)$. A link of $G$ from $x$ to $y$ is said to be \emph{simple} if it has a Hamilton path from $x$ to $y$; otherwise it is \emph{non-simple}. We note that at most one of the two links $L$ and $L^*$ is simple; otherwise $G$ will have a Hamilton cycle. We say that the 2-cut $\{x,y\}$ is a \emph{simple $2$-cut} if either $L$ or $L^*$ is simple; otherwise we say it a \emph{non-simple $2$-cut}. 

\begin{claim}\label{ClSimplePure}
Let $L=L(x,y)$ be a link of $G$. Then $L$ is a simple link of $G$ if and only if it is a pure link.
\end{claim}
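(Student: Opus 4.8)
The implication from right to left is immediate: a triangle on $\{x,y,z\}$ has the Hamilton $x$–$y$ path $xzy$, and a path from $x$ to $y$ of length at least $2$ is itself a Hamilton $x$–$y$ path of the link it induces; so every pure link is simple. It remains to prove that a simple link is pure.

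Suppose for contradiction that $L=L(x,y)$ is a simple link of $G$ that is not pure. Write $L=\langle\{x,y\}\cup V(H)\rangle$, where $H$ is a component of $G-\{x,y\}$ (so $L^*=L^*(x,y)$ is induced on $\{x,y\}\cup V(H_2)$ for the other component $H_2$), and fix a Hamilton path $P=xz_1z_2\cdots z_ky$ of $L$. Since $G$ is $2$-connected, a vertex of $H$ all of whose neighbours lie in $\{x,y\}$ is adjacent to both; hence if $|V(H)|=1$ then $L$ is a triangle or a $P_3$, which is pure — a contradiction. So $|V(H)|\ge 2$. Claw-freeness will be used freely: if $x$ had two non-adjacent neighbours in $H$, then together with a neighbour of $x$ in $V(H_2)$ they would induce a claw, so $N_H(x)$ and, symmetrically, $N_H(y)$ are cliques. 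Now choose a shortest path $xw_1\cdots w_\ell y$ of $L$ when $xy\notin E(G)$, and a shortest $x$–$y$ path of $L$ avoiding the edge $xy$ when $xy\in E(G)$ (such a path exists since $H$ is connected and joins $x$ to $y$); put $A'=V(H)\setminus\{w_1,\dots,w_\ell\}$. If $A'=\emptyset$ then $V(L)=\{x,w_1,\dots,w_\ell,y\}$, so either $xy\notin E(G)$ and $L$ is the path $xw_1\cdots w_\ell y$ (pure, contrary to assumption), or $xy\in E(G)$ and $L$ is the induced cycle $xw_1\cdots w_\ell yx$ with $\ell\ge 2$ (the ``cycle link'' case, handled separately below). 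So we may assume $A'\ne\emptyset$.

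The core reduction then runs as follows. Using that $L^*$ is connected (as $H_2$ is connected and each of $x,y$ has a neighbour in it), one checks that $G'=G-A'$ is still $2$-connected; moreover $G'$ properly contains $L^*$, so it is a proper induced subgraph of $G$. By the minimality of $G$, $G'$ has a spanning $\varTheta$-subgraph $T'$ (or $G'$ is a cycle, and we set $T'=G'$). Each $w_i$ has degree exactly $2$ in $G'$ — the chosen path is induced up to the edge $xy$, and all neighbours of $w_i$ in $H$ outside $\{w_{i-1},w_{i+1}\}$ have been deleted — so $T'$ contains the whole path $xw_1\cdots w_\ell y$, which is then a sub-path lying on a single strand of $T'$. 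Here is where simplicity is used: replace that sub-path of $T'$ by the Hamilton path $P$ of $L$. Since $P$ has the same end-vertices $x,y$ and its interior $\{z_1,\dots,z_k\}=V(H)$ is disjoint from the rest of $T'$ (which meets $V(G)$ only inside $V(L^*)$), the subgraph $T=P\cup(T'-\{w_1,\dots,w_\ell\})$ is again a $\varTheta$-graph — or a cycle if $T'=G'$ was — and it spans $V(L)\cup V(L^*)=V(G)$. Thus $G$ has a spanning $\varTheta$-subgraph (and if $T$ is a cycle, a Hamilton cycle of the non-cycle graph $G$, hence also a spanning $\varTheta$-subgraph), contradicting the choice of $G$.

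The remaining case $xy\in E(G)$ with $L$ an induced cycle $xw_1\cdots w_\ell yx$, $\ell\ge 2$, is the main obstacle: now the $w_i$ have degree $2$ in $G$ itself, so deleting any part of $H$ destroys $2$-connectivity, and shrinking $L$ towards a $P_3$ fails because no interior vertex of $L$ need see both $x$ and $y$. The plan is to work with $L^*$ directly. Since $x$ and $y$ are adjacent, a cut-vertex of $L^*$ would have to separate a subset of $V(H_2)$ from $x$ while keeping $x,y$ together; ruling this out using that $V(H_2)$ is a component of $G-\{x,y\}$ shows $L^*$ is $2$-connected, and $L^*$ is not a cycle (a cycle through the edge $xy$ would have a Hamilton $x$–$y$ path, making $L^*$ simple and $G$ hamiltonian). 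By minimality $L^*$ has a spanning $\varTheta$-subgraph $T^*$, and if $T^*$ uses the edge $xy$ we are done: replacing $xy$ by the Hamilton path $xw_1\cdots w_\ell y$ of $L$ yields a spanning $\varTheta$-subgraph of $G$. The delicate point — which I expect to be the technical heart of the argument — is that a priori $T^*$ need not use $xy$, so one must reroute $T^*$ through the neighbourhoods of $x$ and $y$ to obtain a spanning $\varTheta$-subgraph of $L^*$ that does contain $xy$ (splitting into the sub-cases where $x,y$ lie on a common strand of $T^*$ or on different strands), or else conclude that $L^*$ is simple and hence $G$ hamiltonian; either way contradicting the choice of $G$.
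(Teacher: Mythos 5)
Your right-to-left direction, your treatment of $xy\notin E(G)$, and your reduction for $xy\in E(G)$ with $A'\neq\emptyset$ all run essentially along the paper's lines: delete everything outside a shortest induced $x$--$y$ path, force that path into a spanning $\varTheta$-subgraph of the smaller graph by a degree-$2$ argument, and splice in the Hamilton path $P$ of $L$. The problem is the case you yourself flag as ``the technical heart'': $xy\in E(G)$ and $L$ is an induced cycle $xw_1\cdots w_\ell yx$ with $\ell\geq 2$. There you only outline a plan --- take a spanning $\varTheta$-subgraph $T^*$ of $L^*$ and ``reroute'' it so that it uses the edge $xy$ --- and this step is never carried out. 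It is not a routine rerouting: $x$ and $y$ may each have all of their $T^*$-edges going into $H_2$, and inserting the path $xw_1\cdots w_\ell y$ while deleting edges at both $x$ and $y$ can destroy the $\varTheta$-structure (change the branch vertices, or disconnect a strand). Nothing in your sketch guarantees that $L^*$ admits a spanning $\varTheta$-subgraph through $xy$, nor that $L^*$ is simple. So the proof is incomplete exactly where the real work lies.

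The paper closes this case by a different device. It first proves that when $xy\in E(G)$ the vertices $x,y$ must have a common neighbour in $H_1=L-\{x,y\}$, which immediately rules out your induced-cycle configuration, since there $N_{H_1}(x)=\{w_1\}$, $N_{H_1}(y)=\{w_\ell\}$ and $w_1\neq w_\ell$. The proof of that common-neighbour statement is the step you are missing: assuming no common neighbour, claw-freeness (applied with the private neighbours $\bar x,\bar y\in V(H_1)$) forces $N_{H_2}(x)=N_{H_2}(y)$ to be a clique; one then applies minimality not to $L^*$ but to $G-(V(H_1)\cup\{y\})=L^*-y$, obtains a spanning $\varTheta$-subgraph $T$ there, picks an edge $xv$ of $T$ with $v\in N_{H_2}(x)$ (such an edge exists because all neighbours of $x$ in this subgraph lie in $N_{H_2}(x)$), and replaces it by the path $xPy$ followed by the edge $yv$ --- reinserting $y$ together with all of $H_1$ in one stroke. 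Deleting $y$ before invoking minimality is what sidesteps the rerouting problem entirely; to finish your argument you should adopt this device rather than try to force $T^*$ through the edge $xy$.
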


\begin{proof}
If $L$ is a pure link, then clearly $L$ is simple. Now we assume that $L$ is a simple link of $G$, i.e., $L$ has a Hamilton path $P$ from $x$ to $y$. We will show that $L$ is either a triangle or a path from $x$ to $y$. Set $H_1=L-\{x,y\}$ and $H_2=G-L$.

First suppose that $xy\notin E(G)$. Let $Q$ be a shortest path of $L$ from $x$ to $y$. It follows that $Q$ is an induced path and has length at least 2. If $V(L)=V(Q)$, then $L$ is a path from $x$ to $y$ and we are done. So we conclude that $V(L)\backslash V(Q)\neq\emptyset$. Let $G'=G-V(L)\setminus V(Q)$. Then $G'$ is 2-connected. It follows that $G'$ contains a spanning $\varTheta$-subgraph $T$ (or $G'=T$ is a cycle). Since all internal vertices of $Q$ have degree 2 in $G'$, we see that $T$ contains $Q$. Now we can replace $Q$ with the path $P$ in $T$ to obtain a spanning $\varTheta$-subgraph (or a Hamilton cycle) of $G$, a contradiction. 

Suppose now that $xy\in E(G)$. We claim that $x,y$ have a common neighbor in $H_1$. Suppose not the case. Let $\bar{x}$ be a neighbor of $x$ in $H_1$, and $\bar{y}$ be a neighbor of $y$ in $H_1$ such that $x\bar{y},y\bar{x}\notin E(G)$. For any vertex $v\in V(H_2)$, we see that $vx\in E(G)$ if and only if $vy\in E(G)$; for otherwise $\langle \{x,\bar{x},y,v\}\rangle$ or $\langle \{y,\bar{y},x,v\}\rangle$ is a claw. Thus we have that $N_{H_2}(x)=N_{H_2}(y)$. Note that $\langle N_{H_2}(x)\rangle$ is a clique. 
Let $G'=G-(V(H_1)\cup\{y\})$. Clearly $G'$ is 2-connected. By the minimality of $G$, $G'$ contains a spanning $\varTheta$-subgraph $T$ (or $G'=T$ is a cycle). So $T$ contains an edge $xv$ with $v\in N_{H_2}(x)$. Since $yv\in E(G)$, we can replace $xv$ with the path $xPyv$ in $T$ to obtain a spanning $\varTheta$-subgraph (or a Hamilton cycle) of $G$, a contradiction. So we conclude that $x,y$ have a common neighbor in $H_1$, say $z$.

If $H_1$ has only one vertex $z$, then $L$ is a triangle $xyzx$, and we are done. So we assume that $V(H_1)\backslash\{z\}\neq\emptyset$. Let $G'=G-V(H_1)\backslash\{z\}$. Then $G'$ is 2-connected. It follows that $G'$ contains a spanning $\varTheta$-subgraph $T$. Since $z$ has degree 2 in $G'$, we see that $xz,zy\in E(T)$. Now we can replace $xzy$ with the path $P$ in $T$ to obtain a spanning $\varTheta$-subgraph of $G$, a contradiction.
\end{proof}

A simple link $L$ of $G$ is called a \emph{maximal} simple link if there are no other simple links containing $L$; and a non-simple link $L$ of $G$ is called a \emph{minimal} non-simple link if all other links contained in $L$ are simple. Let $V_2$ be the set of vertices of $G$ of degree 2, and let $V_{\geq 3}$ be the set of vertices of $G$ of degree at least 3.

\begin{claim}\label{ClDegree23}
Every vertex in $V_2$ is an inner vertex of a simple link of $G$; every vertex in $V_{\geq 3}$ is contained in a 2-cut of $G$.
\end{claim}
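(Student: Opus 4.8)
The plan is to handle the two assertions separately, in each case building a small auxiliary subgraph and invoking the minimality of $G$ together with Lemma \ref{LeCutxy}.

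For the first assertion, let $v\in V_2$ with $N(v)=\{a,b\}$. Since $G$ is $2$-connected and is not a cycle, $G$ has a vertex of degree at least $3$, so $G\neq vab\ldots$; more to the point, $a\neq b$ and the edge set around $v$ is just $\{va,vb\}$. First I would observe that since $G$ is claw-free and $v$ has degree $2$, no claw is centered at a neighbor of $v$ using $v$ as a leaf unless $a,b$ are appropriately placed; the real content is to show $v$ sits inside a link that is pure. I would consider a shortest $(a,b)$-path $Q$ through $v$ (so $Q=avb$ if $ab\in E(G)$, giving a triangle, or we extend). Actually the cleanest route: let $L$ be a \emph{minimal} link containing $v$ as an inner vertex — such a link exists because $\{a',b'\}$-cuts trap $v$ on one side (if $v$ lay in no $2$-cut side as an inner vertex, then removing $\{a,b\}$ would disconnect only $v$, making $\{a,b\}$ a $2$-cut with $\{v\}$ a component, so $v$ \emph{is} an inner vertex of the link $\langle\{a,b,v\}\rangle$). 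Then take $L$ to be a minimal non-simple link containing $v$ if $v$ is an inner vertex of some non-simple link; by Claim \ref{ClSimplePure} a simple link is pure, and in a pure link every inner vertex is fine, so it suffices to rule out $v$ being an inner vertex of \emph{only} non-simple links. If $L$ is a minimal non-simple link containing $v$, then every proper sublink is simple, hence pure by Claim \ref{ClSimplePure}; using the structure of $L$ and that $v$ has degree $2$, one derives that $L$ itself is pure (a path or triangle), contradicting non-simplicity — this forces $v$ to lie in a pure, hence simple, link.

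For the second assertion, let $v\in V_{\geq 3}$. Since $G$ is claw-free, $\langle N(v)\rangle$ has independence number at most $2$, so $N(v)$ is covered by at most two cliques. If $\langle N(v)\rangle$ is connected, I would try to delete $v$ together with enough of its neighborhood to expose a $2$-cut, or argue via local connectivity: if \emph{every} vertex of degree $\geq 3$ had connected neighborhood, combined with the degree-$2$ vertices lying in pure links (first assertion), one could "suppress" the degree-$2$ vertices and apply Theorem \ref{Th A} to a reduced locally connected claw-free graph to get hamiltonicity of $G$, a contradiction — so some $v\in V_{\geq 3}$ has disconnected neighborhood, i.e. $N(v)=A_1\cup A_2$ with $A_1,A_2$ disjoint cliques and no edges between them; then taking one vertex from each side appropriately yields a $2$-cut separating $v$ (or a minimal such). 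But the claim asserts \emph{every} vertex of $V_{\geq 3}$ is in a $2$-cut. So I would instead argue directly: given $v$ with $d(v)\geq 3$, if $v$ is in no $2$-cut then $G-v$ is $2$-connected; by minimality $G-v$ has a spanning $\varTheta$-subgraph $T$ (or is a cycle). Now $v$ has $\geq 3$ neighbors on $T$; pick two consecutive neighbors $a,b$ of $v$ along one of the three paths of $T$, say with the sub-path $aPb$ of $T$ internally disjoint from $N(v)$, and reroute $T$ through $v$: replace the edge or segment to insert $v$, producing a spanning $\varTheta$-subgraph (or Hamilton cycle) of $G$ — here one must be careful that inserting $v$ does not destroy the $\varTheta$-structure, which is where claw-freeness and the location of $a,b$ are used. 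This contradiction shows $v$ lies in some $2$-cut.

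The main obstacle will be the bookkeeping in the rerouting arguments: precisely, ensuring that when we replace a path $Q$ (the shortest $(a,b)$-path, or a segment of $T$) by the Hamilton path of the simple link $L$, or insert the degree-$\geq 3$ vertex $v$ into $T$, the resulting subgraph is genuinely a spanning $\varTheta$-subgraph (exactly three internally disjoint paths between two vertices) and not merely a $2$-connected spanning subgraph of higher complexity. For the degree-$2$ case the delicate point is establishing that a minimal non-simple link containing $v$ cannot exist, i.e. that minimality of the link plus the degree-$2$ condition on $v$ forces pureness — this requires examining how $v$'s two edges interact with the clique structure forced on the (pure) proper sublinks. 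I expect these verifications to be routine but notation-heavy, of the same flavor as the arguments already carried out in the proof of Claim \ref{Cl2Connected}.
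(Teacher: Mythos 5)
For the first assertion you have, buried inside a detour, exactly the paper's one-line proof: if $N(v)=\{a,b\}$ then $\{v\}$ is a component of $G-\{a,b\}$ (and $|V(G)|\ge 4$), so $\{a,b\}$ is a $2$-cut and $\langle\{a,b,v\}\rangle$ is a link with Hamilton path $avb$, i.e.\ a simple link having $v$ as an inner vertex. That already finishes the first part; the subsequent discussion of ``minimal non-simple links containing $v$'' is a logical misstep --- the claim only asks for \emph{one} simple link containing $v$ as an inner vertex, so there is nothing to ``rule out'', and the argument you sketch there (``one derives that $L$ itself is pure'') is never actually carried out.

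The second assertion is where the genuine gap lies. You correctly reduce to: if $v\in V_{\geq 3}$ lies in no $2$-cut then $G-v$ is $2$-connected, hence by minimality has a spanning $\varTheta$-subgraph $T$ (or is a cycle), and you want to insert $v$ into $T$. But the insertion step fails as described: to splice $v$ into $T$ you need an \emph{edge} $ab\in E(T)$ with $a,b\in N(v)$, so that $ab$ can be replaced by $avb$. Two ``consecutive neighbours of $v$ along a path of $T$'' need not be adjacent in $T$, and replacing a longer segment $aPb$ by $avb$ would delete the interior of that segment from the spanning subgraph. Nothing in your setup guarantees that $T$ uses any edge joining two neighbours of $v$. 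The paper manufactures such an edge by a global argument you are missing: it partitions $V_{\geq 3}$ into $A_1$ (vertices lying in some $2$-cut) and $A_2$ (vertices lying in none), observes that a vertex of $A_2$ cannot have a degree-$2$ neighbour, and by connectivity finds an edge $xz$ with $z\in A_2$ and $x\in A_1$. Taking a $2$-cut $\{x,y\}$ through $x$ and the component $H$ of $G-\{x,y\}$ containing $z$, claw-freeness makes $N_H(x)$ a clique containing $z$, and Lemma~\ref{LeCutxy}, applied to the spanning $\varTheta$-subgraph $T$ of $G-z$ at the $2$-cut $\{x,y\}$, forces $T$ to contain an edge $xz_1$ with $z_1\in N_H(x)\setminus\{z\}$; then $xz,zz_1\in E(G)$ and $xz_1$ can be replaced by the path $xzz_1$. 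It is this use of Lemma~\ref{LeCutxy} at a $2$-cut through a \emph{neighbour} of $z$ (not through $z$ itself, which by assumption lies in no $2$-cut) that produces the edge to subdivide; without it your rerouting has nothing to reroute through.
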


\begin{proof}
If $z\in V_2$, say $N(z)=\{x,y\}$, then clearly $\{x,y\}$ is a 2-cut of $G$, $\langle \{x,y,z\}\rangle:=L(x,y)$ is a simple link of $G$ and $z$ is an inner vertex of the link. This proves the first assertion of the claim.

Now we shall prove that every vertex in $V_{\geq 3}$ is contained in a 2-cut of $G$. For this we let $A_1$ be the set of vertices in $V_{\geq 3}$ that are contained in some 2-cuts; and $A_2$ be the set of vertices in $V_{\geq 3}$ that are not contained in any 2-cut. It suffices  to show that $A_2=\emptyset$. Suppose otherwise that $A_2\neq\emptyset$.

If $z\in A_2$ has a neighbor $\bar{z}\in V_2$, then $N(\bar{z})$ is a 2-cut of $G$ containing $z$, contradicting  $z\in A_2$. So we conclude that there are no edges between $A_2$ and $V_2$. 
Since $G$ is 2-connected, there are some edges between $A_1$ and $A_2$. Let $xz\in E(G)$ with $x\in A_1$ and $z\in A_2$. 

Let $\{x,y\}$ be a 2-cut of $G$ (since $x\in A_1$) and let $H$ be the component of $G-\{x,y\}$ containing $z$. Since $G$ is claw-free, $\langle N_{H}(x)\rangle$ is a clique containing $z$. By the assumption that $z\in A_2$, we have that $G-z$ is 2-connected. It follows that $G-z$ contains a spanning $\varTheta$-subgraph $T$ (or $G-z=T$ is a cycle). 
By Lemma \ref{LeCutxy}, $T$ contains an edge $xz_1$ with $z_1\in N_{H}(x)\backslash\{z\}$. Notice that $zz_1\in E(G)$. Now we can replace $xz_1$ with $xzz_1$ in $T$ to obtain a spanning $\varTheta$-subgraph (or a Hamilton cycle) of $G$, a contradiction.
\end{proof}

It follows from Claims \ref{ClSimplePure} and \ref{ClDegree23}
that 
 a vertex is an inner vertex of a simple link of $G$ if and only if it is in $V_2$. 

\begin{claim}\label{ClDisjointLink}
Any two maximal simple links of $G$ are vertex-disjoint.
\end{claim}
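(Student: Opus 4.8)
Any two maximal simple links of $G$ are vertex-disjoint.

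The plan is to argue by contradiction: suppose $L_1 = L_1(x_1,y_1)$ and $L_2 = L_2(x_2,y_2)$ are two distinct maximal simple links that share a vertex. By Claim~\ref{ClSimplePure}, each $L_i$ is a pure link — either a triangle or a path of length at least $2$ from $x_i$ to $y_i$ — and by the remark following Claim~\ref{ClDegree23}, the inner vertices of a simple link are precisely degree-$2$ vertices of $G$, while each $x_i,y_i$ lies in a $2$-cut and hence (in the relevant cases) has degree $\geq 3$. I would first record these structural facts, and then do a case analysis on \emph{which} vertices $L_1$ and $L_2$ have in common.

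The first step is to rule out that an inner (degree-$2$) vertex of $L_1$ is shared with $L_2$. If $z$ is an inner vertex of both, then since $z$ has degree exactly $2$ in $G$, its two neighbours lie in both links; tracing along the degree-$2$ vertices forces $L_1$ and $L_2$ to coincide on a whole stretch, and because the end-vertices of a simple link are exactly where the degree jumps above $2$ (using that the co-link is non-simple, so cannot itself be a pure link forming a cycle with $L_i$), one concludes $L_1 = L_2$, contradicting distinctness. The second step handles the case where the shared vertex is an end-vertex, say $x_1 \in V(L_2)$. If $x_1$ is an inner vertex of $L_2$ it has degree $2$, contradicting $x_1$ being an end of the simple (pure) link $L_1$ with neighbours in both components of $G - \{x_1,y_1\}$ (so $\deg x_1 \geq 3$ whenever $L_1$ is nontrivial; the trivial case $L_1 = \langle\{x_1,y_1\}\rangle$ with $x_1y_1 \in E$ can be absorbed into $L_2$, contradicting maximality of $L_1$). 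So $x_1$ must be an end-vertex of $L_2$ as well, i.e.\ $\{x_1,y_1\} \cap \{x_2,y_2\} \neq \emptyset$ with the shared vertices being ends on both sides.

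It remains to derive a contradiction when $L_1$ and $L_2$ are pure links sharing one or two of their end-vertices (but no inner vertices). The key idea here is that the union $L_1 \cup L_2$ — two pure links glued at a common end — is itself a link (or sits inside a link) of $G$ that admits a Hamilton path between a suitable pair of its end-vertices: e.g.\ if $x_1 = x_2$, the concatenation $y_1$–$x_1$–$y_2$ along the two paths/triangles is a Hamilton path of $\langle V(L_1) \cup V(L_2)\rangle$ from $y_1$ to $y_2$, and $\{y_1,y_2\}$ is a $2$-cut of $G$ (one checks $G - \{y_1,y_2\}$ is disconnected, since $L_1 \cup L_2 - \{y_1,y_2\}$ is a component — here one uses that all inner vertices have degree $2$). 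Hence $\langle V(L_1)\cup V(L_2)\rangle$ is a simple link strictly containing $L_1$, contradicting the maximality of $L_1$. The case where $L_1$ and $L_2$ share both end-vertices ($\{x_1,y_1\} = \{x_2,y_2\}$) is even easier: then $L_1 \cup L_2$ has two internally disjoint $x_1$–$y_1$ paths and would give a second $2$-connected piece, so gluing produces a larger simple link or directly contradicts that $G$ has no spanning $\varTheta$-subgraph when combined with the co-link. I expect the main obstacle to be the bookkeeping in this last step — carefully verifying that the merged vertex set really induces a link (i.e.\ that no extra edges to the rest of $G$ sneak in, which is where claw-freeness and the degree-$2$ property of inner vertices must be invoked) — rather than any deep difficulty.
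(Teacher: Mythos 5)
Your proposal is correct and follows essentially the same route as the paper: reduce to the case where only end-vertices can be shared (since inner vertices of simple links have degree $2$ while end-vertices of maximal simple links have degree at least $3$), then observe that gluing the two pure links at a common end-vertex yields a strictly larger simple link --- using claw-freeness at the shared vertex to see that the union is again a link over a $2$-cut --- contradicting maximality. The paper handles the ``both end-vertices shared'' case implicitly (two distinct links over the same $2$-cut are co-links, and co-links cannot both be simple), which matches your remark about that subcase.
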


\begin{proof}
Suppose not.
Let $L_1=L_1(x_1,y_1)$ and $L_2=L_2(x_2,y_2)$ be two maximal simple links of $G$ such that $V(L_1)\cap V(L_2)\neq\emptyset$. We can see that $x_1,x_2,y_1,y_2\in V_{\geq 3}$. It follows that the only possible common vertex of $L_1,L_2$ are their end vertices. We may assume without loss of generality that $y_1=x_2$. Since $G$ is claw-free, $N(y_1)=N_{L_1}(y_1)\cup N_{L_2}(x_2)$. Now $\{x_1,y_2\}$ is a 2-cut of $G$, and $L=L_1\cup L_2$ is a link of $G$ from $x_1$ to $y_2$. 

Notice that a Hamilton path of $L_1$ from $x_1$ to $y_1$ and a Hamilton path of $L_2$ from $x_2$ to $y_2$ form a Hamilton path of $L$ from $x_1$ to $y_2$. This implies that $L$ is a simple link containing both $L_1$ and $L_2$, contradicting that $L_1$ and $L_2$ are maximal.
\end{proof}

Now we distinguish the following two cases.

\begin{case}
Every 2-cut of $G$ is simple.
\end{case}

\begin{claim}\label{ClCommonNeighborGL}
Let $L=L(x,y)$ be a maximal simple link of $G$. Then $x,y$ cannot have  common neighbors in $G-V(L)$.
\end{claim}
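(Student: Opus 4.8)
The plan is to argue by contradiction: suppose $w\in V(G)\setminus V(L)$ is adjacent to both $x$ and $y$. By Claim~\ref{ClSimplePure} the maximal simple link $L$ is a pure link, so $L$ is either a triangle $xzyx$ or an induced path $xv_1\cdots v_{k-1}y$ with $k\ge 2$; write $H_1=L-\{x,y\}$ and $H_2=G-V(L)$, so that $\{x,y\}$ is a $2$-cut of $G$ with components $H_1,H_2$, and $w\in V(H_2)$. First I would record a few easy facts. The co-link $L^{*}=L^{*}(x,y)$ must be non-simple: otherwise $L$ and $L^{*}$ would be two simple links and their Hamilton paths would combine into a Hamilton cycle of $G$, so $G$ (being a non-cycle) would have a spanning $\varTheta$-subgraph, a contradiction. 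Consequently $\deg_G(w)\ge 3$, for if $N_G(w)=\{x,y\}$ then $H_2=\{w\}$ and $L^{*}=xwyx$ is a triangle, hence simple. Moreover every vertex of $H_1$ has all its neighbors inside $L$ (in the path case each $v_i$ lies in $V_2$ by the characterization following Claim~\ref{ClDegree23}, and in the triangle case $N_G(z)=\{x,y\}$), so $w$ is not adjacent to any vertex of $H_1$; in particular $w$ is not adjacent to the neighbor of $x$ in $H_1$ nor to that of $y$, and the third neighbor of $w$ lies in $V(H_2)\setminus\{w\}$.

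Next I would split into cases according to whether $x$ (respectively $y$) has a neighbor in $H_2$ besides $w$. Suppose first that $N_{H_2}(y)=\{w\}$. Then no vertex of $V(H_1)\cup\{y\}$ has a neighbor in $V(H_2)\setminus\{w\}$, so $V(H_1)\cup\{y\}$ is a component of $G-\{x,w\}$ (which is proper, as $V(H_2)\setminus\{w\}\ne\emptyset$); hence $\{x,w\}$ is a $2$-cut of $G$ and $\widehat{L}:=\langle V(L)\cup\{w\}\rangle$ is a link of $G$ from $x$ to $w$. Letting $P_L$ be a Hamilton path of $L$ from $x$ to $y$, the path $xP_Lyw$ is a Hamilton path of $\widehat{L}$ from $x$ to $w$, so $\widehat{L}$ is a simple link properly containing $L$, contradicting the maximality of $L$. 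The case $N_{H_2}(x)=\{w\}$ is symmetric.

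So I may assume that both $x$ and $y$ have a neighbor in $H_2\setminus\{w\}$. Since the neighbor of $x$ in $H_1$ has at most one neighbor inside $N_G(x)$ and is non-adjacent to every vertex of $N_{H_2}(x)$, claw-freeness forces $\langle N_{H_2}(x)\rangle$ to be a clique; symmetrically $\langle N_{H_2}(y)\rangle$ is a clique, and both contain $w$. Put $S:=N_{H_2}(x)$, a clique with $|S|\ge 2$. As $N_{H_2}(x)\cup N_{H_2}(y)\subseteq S$, removing $S$ separates $V(L)$ from $V(H_2)\setminus S$ in $G$, so $S$ is a (clique) cutset. If $V(H_2)=S$, then $H_2$ is a clique, and stitching together $P_L$, the path $xwy$, and a path through $S\setminus\{w\}$ ending at a vertex of $N_{H_2}(y)$ yields a spanning $\varTheta$-subgraph of $G$ (three internally disjoint $x$--$y$ paths covering all vertices), a contradiction. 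If $V(H_2)\setminus S\ne\emptyset$ and $|S|=2$, say $S=\{w,d\}$, then $\{w,d\}$ is a $2$-cut of $G$ and $\langle V(L)\cup\{w,d\}\rangle$ is a link from $w$ to $d$ with Hamilton path $wxP_Lyd$, i.e.\ a simple link properly containing $L$, again contradicting maximality. Finally, if $V(H_2)\setminus S\ne\emptyset$ and $|S|\ge 3$, then $G':=\langle V(L)\cup S\rangle$ is a proper, $2$-connected, non-cycle induced subgraph of $G$; by minimality it has a spanning $\varTheta$-subgraph $T'$, which — using that $S$ is a clique of size at least $3$ and that the vertices of $V(H_2)\setminus S$ attach to $G'$ only along $S$ — should be reroutable through a suitable Hamilton path of $\langle V(H_2)\rangle$ between two vertices of $S$ to produce a spanning $\varTheta$-subgraph of $G$, the final contradiction.

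I expect this last sub-case to be the main obstacle: making the rerouting precise requires some control of the internal structure of $H_2$ (finding a Hamilton path of $\langle V(H_2)\rangle$ with both ends in $S$, together with an $S$-edge of $T'$ to be replaced by it). It may well be cleaner to avoid it entirely by extracting a genuine $2$-cut contained in $S$ — using $2$-connectivity of $G$ to locate two vertices of $S$ whose removal disconnects $V(L)$ from part of $H_2$ — and then repeating the maximality argument as in the $|S|=2$ case, so that every branch ends either in a spanning $\varTheta$-subgraph of $G$ or in a simple link properly containing $L$.
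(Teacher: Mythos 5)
Your setup and your first case ($N_{H_2}(y)=\{w\}$, yielding the $2$-cut $\{x,w\}$ and a simple link properly containing $L$) are sound, but the main case has two genuine gaps. The first is the assertion $N_{H_2}(x)\cup N_{H_2}(y)\subseteq S$ with $S:=N_{H_2}(x)$, i.e.\ $N_{H_2}(y)\subseteq N_{H_2}(x)$, which you give no argument for. Claw-freeness only shows that each of $N_{H_2}(x)$, $N_{H_2}(y)$ is a clique containing $w$; it does not force one to contain the other (vertices $u\in N_{H_2}(x)\setminus N_{H_2}(y)$ and $v\in N_{H_2}(y)\setminus N_{H_2}(x)$ with $uw,vw\in E(G)$ create no claw at $w$, $x$ or $y$ by themselves). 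This containment is precisely the heart of the paper's proof: having shown $w\in V_{\geq 3}$, it uses Claims \ref{ClDegree23} and \ref{ClDisjointLink} to conclude that $w$ is the end-vertex of a unique maximal simple link and hence has a degree-$2$ neighbour $\bar w\notin V(L)$ with $x\bar w,y\bar w\notin E(G)$; the claw at $w$ with leaves $x,y,\bar w$ then forces $xy\in E(G)$, so $L$ is a triangle, and a further claw analysis involving $\bar w$ shows that (up to symmetry) $N_{G-L}(x)\subseteq N_{G-L}(y)$. Without this step your set $S$ need not separate $V(L)$ from the rest of $H_2$, and the remaining sub-cases do not get off the ground.

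The second gap is the one you flag yourself: in the sub-case $|S|\geq 3$ and $V(H_2)\setminus S\neq\emptyset$ there is no reason that $\langle V(H_2)\rangle$ has a Hamilton path between two vertices of $S$, nor that a spanning $\varTheta$-subgraph of $\langle V(L)\cup S\rangle$ uses an $S$-edge that can absorb such a path, so the rerouting is not actually carried out; your fallback of finding a $2$-cut inside $S$ also fails in general when $|S|\geq 3$. The paper avoids any analysis of the internal structure of $H_2$: once $L$ is known to be a triangle with inner vertex $z$ and $N_{G-L}(x)\subseteq N_{G-L}(y)$, it applies the minimality of $G$ to $G'=G-\{z,y\}$ (which is $2$-connected because $\langle N_{G-L}(y)\rangle$ is a clique), takes a spanning $\varTheta$-subgraph $T$ of $G'$, picks an edge $xx_1\in E(T)$ with $x_1\in N_{G-L}(x)$, and replaces it by the path $xzyx_1$ --- legitimate exactly because $x_1\in N_{G-L}(y)$. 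I would rework your main case along these lines rather than trying to repair the separator argument.
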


\begin{proof}
Since $L$ is a maximal simple link of $G$, both $x$ and $y\in V_{\geq 3}$. Suppose that $x,y$ have a common neighbor $z$ in $G-V(L)$. We have $z\in V_{\geq 3}$; otherwise
$z\in V_2$ and 
$x$ is contained in two maximal simple links.

By Claim \ref{ClDegree23}, $z$ is contained in a 2-cut of $G$, and 
then is an end-vertex of a unique maximal simple link of $G$ by Claim 4. Let $\bar{z}$ be a neighbor of $z$ in $V_2$. It follows that $\bar{z}\notin V(L)$ and thus $x\bar{z},y\bar{z}\notin E(G)$. Since $G$ is claw-free, $xy\in E(G)$, implying that $L$ is a triangle, say $L=xywx$. 

We claim that either $N_{G-L}(x)\subseteq N_{G-L}(y)$ or $N_{G-L}(y)\subseteq N_{G-L}(x)$. Suppose otherwise that there are vertices $x_1,y_1$ outside $L$ such that $xx_1,yy_1\in E(G)$ and $xy_1,yx_1\notin E(G)$. 
Since each of $\{x_1,y_1\}$ is  not contained in two maximal simple links of $G$, it follows that $x_1,y_1\in V_{\geq 3}$, and $x_1w,zw\notin E(G)$, implying that $x_1z\in E(G)$, and similarly, $y_1z\in E(G)$. 
Recall that 
$z$ is an end-vertex of a unique maximal simple link of $G$.
It follows that $x_1\bar{z}\notin E(G)$ or $y_1\bar{z}\notin E(G)$. Without loss of generality we assume that $x_1\bar{z}\notin E(G)$, which implies that $yx_1\in E(G)$ by $G$ being claw-free, a contradiction. So we can assume without loss of generality that $N_{G-L}(x)\subseteq N_{G-L}(y)$.

Let $G'=G-\{w,y\}$. Recall that $\langle N_{G-L}(y)\rangle$ is a clique of $G$, we see that $G'$ is 2-connected. It follows that $G'$ has a spanning $\varTheta$-subgraph $T$ (or $G'=T$ is a cycle). Let $xx_1$ be an edge of $T$. Then $yx_1\in E(G)$ and we can replace $xx_1$ with $xwyx_1$ in $T$ to obtain a spanning $\varTheta$-subgraph (or a Hamilton cycle) of $G$, a contradiction. 
\end{proof}

We color all vertices in $V_2$ red and all vertices in $V_{\geq 3}$ blue. We now show that $G$ satisfies the conditions (1)-(5) of Lemma \ref{LeUnfold}. The conditions (1)(2) are trivial. If there exists a vertex $x\in V_{\geq 3}$ such that $x$ has  two neighbors in $V_2$, then $x$ will be contained in two maximal simple links of $G$, contradicting Claim \ref{ClDisjointLink}. This proves (3). Let $x,y$ be two associated vertices of $G$. Then $x,y$ are the end-vertices of a maximal simple link $L=L(x,y)$ of $G$, and (4) was proved in Claim \ref{ClCommonNeighborGL}. If $xy\in E(G)$, then $L$ is a triangle by Claim \ref{ClSimplePure}, implying that $x,y$ have a common neighbor in $V_2$. This proves (5). By Lemma \ref{LeUnfold}, $G$ is an unfoldment of a multi-graph $F$.

Recall that $G$ is 2-connected. If $G$ has a 2-cut $\{x,y\}$ such that $x,y\in V_{\geq 3}$ are not associated, then the two links $L=L(x,y)$ and $L^*=L^*(x,y)$ are not pure links, and then not simple links by Claim \ref{ClSimplePure}. It follows that $\{x,y\}$ is a non-simple 2-cut of $G$, a contradiction. So we conclude that every 2-cut of $G$ with $x,y\in V_{\geq 3}$ is an associated pair. By Lemma \ref{LePropertyFG}, $F$ is 3-edge-connected. 

If $F$ has at most two vertices of odd degree, then $F$ has an Euler trail, and $G$ has a spanning $\varTheta$-subgraph by Lemma \ref{LePropertyFG}, a contradiction. So we conclude that $F$ has at least four vertcies of odd degree. If $F$ has a 2-edge-connected proper subgraph $F'$ with at least four vertcies of odd degree, then $G$ has a proper induced subgraph $G'$ which is an unfoldment of $F'$ by Lemma \ref{LeSubgraphF}. Clearly $G'$ is not a cycle, and by Lemma \ref{LePropertyFG}, $G'$ has no spanning $\varTheta$-subgraph, a contradiction. So we conclude that every 2-edge-connected proper subgraph of $F$ has at most two vertices of odd degree. 

By Lemma \ref{LeMinimalF}, $F$ is isomorphic to one of $M_1,M_2,M_3,M_4$. It follows that $G\in\mathcal{H}_1\cup\mathcal{H}_2\cup\mathcal{H}_3\cup\mathcal{H}_4$.

\begin{case}
 $G$ has a 
non-simple 2-cut $\{x_0,y_0\}$.
\end{case}

Then the two links $L_0=L_0(x_0,y_0)$ and $L_0^*=L_0^*(x_0,y_0)$ are non-simple. Let $L_1=L_1(x_1,y_1)$ be a minimal non-simple link contained in $L_0$, and $L_2=L_2(x_2,y_2)$ be a minimal non-simple link contained in $L_0^*$ (possibly $\{x_0,y_0\}=\{x_1,y_1\}$ or $\{x_2,y_2\}$). So $L_1$ and $L_2$ are internally-disjoint.

\begin{claim}\label{ClNotCutxy}
Let $x\in V(L_1)\backslash\{x_1,y_1\}$ and $y\in V(L_1^*)$. Then $\{x,y\}$ is not a 2-cut of $G$.
\end{claim}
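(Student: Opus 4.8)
The plan is to argue by contradiction. Suppose $\{x,y\}$ is a $2$-cut of $G$, where $x\in V(L_1)\setminus\{x_1,y_1\}$ and $y\in V(L_1^*)$. Write $G-\{x_1,y_1\}=H_1\cup H_2$ with $L_1=\langle\{x_1,y_1\}\cup V(H_1)\rangle$ and $L_1^*=\langle\{x_1,y_1\}\cup V(H_2)\rangle$, so that $x\in V(H_1)$ and $y\in\{x_1,y_1\}\cup V(H_2)$. Since $G$ is $2$-connected and claw-free, $G-\{x,y\}$ has exactly two components $D,D'$. The first goal is to locate one of these components \emph{inside} $L_1$. The starting point is that $L_1^*$ is connected and does not contain $x$ (and, when $y\in V(H_2)$, $L_1^*-y$ still contains both $x_1$ and $y_1$), together with the fact that every component of $L_1-x$ meets $\{x_1,y_1\}$ — otherwise $x$ would be a cut-vertex of $G$. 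Gluing $L_1-x$ and $L_1^*-y$ along the vertices of $\{x_1,y_1\}\setminus\{y\}$ one checks that, since $G-\{x,y\}$ is disconnected, either some component of $G-\{x,y\}$ is contained in $V(H_1)\setminus\{x\}$, or else $x$ separates $x_1$ from $y_1$ in $L_1$ and $y$ separates $x_1$ from $y_1$ in $L_1^*$; in the latter situation one passes to the $2$-cut $\{x,x_1\}$ (or $\{x,y_1\}$) of $G$, which now does have a component contained in $V(H_1)$. So in every case we obtain a vertex $w\in\{x,x_1,y_1\}$ and a $2$-cut $\{x,w\}$ of $G$ (with $w\ne x$) having a component $D'\subseteq V(H_1)\setminus\{x\}$ with $D'\cup\{x,w\}\not\ni$ one of $x_1,y_1$.

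\textbf{Main steps.} Granting the previous paragraph, set $L':=\langle D'\cup\{x,w\}\rangle$. This is a link of $G$ connecting $x$ and $w$, and its vertex set $D'\cup\{x,w\}$ is a \emph{proper} subset of $V(L_1)$ (it misses one of $x_1,y_1$). By the minimality of the non-simple link $L_1$, the link $L'$ is simple, hence by Claim~\ref{ClSimplePure} it is a \emph{pure} link — a triangle $xwu$ or an induced $x$--$w$ path; in particular every vertex of $D'$ has degree $2$ in $G$. On the other side, $G-D'$ is $2$-connected: $D'$ is attached to the rest of $G$ only along $\{x,w\}$, so deleting it cannot destroy $2$-connectedness. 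Since $G-D'$ is a proper induced subgraph of $G$, the minimality hypothesis on $G$ gives that $G-D'$ is a cycle or has a spanning $\varTheta$-subgraph $T'$. The final step is to re-insert the pure link $L'$ into $T'$ (or into the cycle) so as to produce a spanning $\varTheta$-subgraph (or a Hamilton cycle) of $G$, contradicting the hypotheses on $G$. When $L'$ is the triangle $xwu$ and $xw\in E(T')$ this is immediate — replace the edge $xw$ by the path $xuw$; the content lies in the remaining configurations, where one must reroute $T'$ so that $x$ and $w$ become consecutive on a branch of the theta, or exhibit a spanning $\varTheta$/cycle of $G-D'$ passing through an $x$--$w$ edge, and then substitute the $L'$-path.

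\textbf{Main obstacle.} I expect the re-insertion step, not the bookkeeping, to be the crux. The issue is that an arbitrary spanning $\varTheta$-subgraph of $G-D'$ might, a priori, place $x$ and $w$ on distinct branches in a way that does not obviously accommodate the $x$--$w$ path of $L'$. The point to exploit is that $G$ — and hence $G-D'$ — is claw-free: if $x$ and $w$ were "blocked'' in the bad way, one produces an induced claw at $x$, at $w$, or at a neighbouring vertex, using that all vertices of $D'$ have degree $2$ and the local structure of links around $x_1,y_1$. (One also uses, as elsewhere in this section, Lemma~\ref{LeCutxy} to control which edges at $x,w$ a spanning $\varTheta$-subgraph must use.) The routine parts are the symmetry $y=x_1$ versus $y=y_1$, and the degenerate possibilities where $G-D'$ or $L'$ is very small, each of which either reduces to a pure-link/pure-chain substitution or contradicts $G$ being non-hamiltonian.
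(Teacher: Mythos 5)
Your reduction to a $2$-cut of the form $\{x,w\}$ with $w\in\{x_1,y_1\}$ and a component $D'\subseteq V(H_1)\setminus\{x\}$, and the observation that $L'=\langle D'\cup\{x,w\}\rangle$ is a link properly contained in $L_1$, hence simple by minimality of $L_1$ and pure by Claim~\ref{ClSimplePure}, both match the paper. But from that point on your route diverges from the paper's and, as written, it has a genuine gap. First, ``$G-D'$ is $2$-connected because $D'$ attaches only along $\{x,w\}$'' is false in general: $x$ may have all but one of its neighbours inside $D'$ (its remaining neighbours lie in $D\cup\{w\}$, and nothing prevents there being only one), in which case $G-D'$ has a vertex of degree $1$. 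Second, and more seriously, the re-insertion step cannot be carried out. Since $L'$ is pure, $D'$ \emph{is} already the shortest $x$--$w$ connection on that side; deleting it removes the only degree-$2$ vertices that would force a spanning $\varTheta$-subgraph of the smaller graph to pass where you need it. If $L'$ is an induced path of length at least $2$ then $xw\notin E(G)$, so a spanning $\varTheta$-subgraph $T'$ of $G-D'$ has no edge $xw$ to subdivide, and adding the path $xD'w$ to $T'$ as an ear never yields a $\varTheta$-graph; even in the triangle case $xwu$ nothing forces $xw\in E(T')$ (Lemma~\ref{LeCutxy} does not apply, since $\{x,w\}$ is no longer a $2$-cut of $G-D'$). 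The deletion-plus-reinsertion device works elsewhere in the paper precisely because a forced path or degree-$2$ vertex is \emph{kept}; here there is nothing left to keep, and invoking claw-freeness does not repair this. Indeed the intended contradiction ``$G$ has a spanning $\varTheta$-subgraph'' is not the one that is actually available.

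The paper's argument is purely structural and is the missing idea. With $w=y_1$, the pure link $L'=L'(x,y_1)$ has a Hamilton path $P'$ from $x$ to $y_1$. If $V(L_1)\setminus V(L')=\{x_1\}$ then $xx_1\in E(G)$ and $x_1xP'y_1$ is a Hamilton path of $L_1$, contradicting that $L_1$ is non-simple. Otherwise the leftover set $V(H_1)\setminus(D'\cup\{x\})$ is nonempty, and claw-freeness at $y_1$ (its neighbours in $H_1$ form a clique meeting $D'$) shows this set attaches only to $\{x,x_1\}$; hence $\{x,x_1\}$ is a second $2$-cut, producing a second simple link through $x$. Two simple links sharing the vertex $x$ contradict the pairwise disjointness of maximal simple links (Claim~\ref{ClDisjointLink}). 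The case $y\in V(L_1^*)\setminus\{x_1,y_1\}$ is then reduced to the case $y\in\{x_1,y_1\}$ exactly as in your first paragraph. You would need to replace your ``main steps'' and ``main obstacle'' by this argument (or something equivalent) for the proof to go through.
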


\begin{proof}
By contradiction, suppose that $\{x,y\}$ is a
2-cut of $G$.
If $y\in\{x_1,y_1\}$, then 
we assume that $\{x,y_1\}$ is a 2-cut of $G$ without loss of generality.
Let $L'_1=L'_1(x,y_1)$ be a link contained in $L_1$. Since $L_1$ is a minimal non-simple link, we have that $L'_1$ is simple. Let $P'_1$ be the Hamilton path of $L'_1$ from $x$ to $y_1$. 
If $V(L_1)\backslash V(L'_1)=\{x_1\}$, then $xx_1\in E(G)$, and $x_1xP'_1y_1$ is a Hamilton path of $L_1$ from $x_1$ to $y_1$, contradicting that $L_1$ is non-simple. So we conclude that $V(L_1)\backslash(\{x_1\}\cup V(L'_1))\neq\emptyset$. 

By $G$ being claw-free, $N(y_1)\subseteq V(L'_1)\cup V(L_1^*)$. It follows that every path from $L'_1-\{x,y_1\}$ to $L_1-V(L'_1)\cup \{x_1\}$ will pass through either $x$ or $x_1$. We have that $\{x,x_1\}$ is also a 2-cut of $G$. Now $L''_1=L''_1(x,x_1)$ is a link contained in $L_1$, which is simple. However, $L'_1$ and $L''_1$ intersect at $x$, contradicting Claim \ref{ClDisjointLink}. So we conclude that $\{x,y_1\}$ is not a 2-cut of $G$, and similarly, $\{x,x_1\}$ is not a 2-cut of $G$.

Hence $y\in V(L_1^*)\backslash\{x_1,y_1\}$. Then $\{x_1,y_1\}$ separates $x,y$.
Since $\{x,y\}$ is a 2-cut of $G$,
it follows that 
 $x_1,y_1$ are in distinct components of $G-\{x,y\}$. If $L_1$ has only one inner-vertex $x$, then $L_1$ is simple, a contradiction. So we let $z$ be an inner-vertex of $L_1$ other than $x$. Assume without loss of generality that $x_1$ and $z$ are in distinct components of $G-\{x,y\}$. Then every path from $x_1$ to $z$ will pass through either $x$ or $y_1$. This implies that $\{x,y_1\}$ is a 2-cut of $G$, by the previous case, we again get a  contradiction.
\end{proof}

Let $V'_2$ be the set of the inner vertices of $L_1$ of degree 2, and let $V'_{\geq 3}$ be the set of inner vertices of $L_1$ of degree at least 3. By Claim \ref{ClDegree23}, every vertex in $V'_{\geq 3}$ is contained in a 2-cut. By Claim \ref{ClNotCutxy}, such a 2-cut is contained in $V(L_1)\backslash\{x_1,y_1\}$. It follows that every vertex in $V'_{\geq 3}$ is an end-vertices of a maximal simple link contained in $L_1-\{x_1,y_1\}$, which is simple by the choice of $L_1$. By Claims \ref{ClDegree23} and \ref{ClDisjointLink}, $V'_{\geq 3}$ can be partitioned into pairs such that each pair of vertices are connected by a maximal simple link contained in $L_1$.

If $x_1$ has a neighbor in
$V'_2$, then $x_1$ is an end-vertex of a simple link contained in $L_1$,
thus $x_1$ and some vertex of this simple link form a 2-cut of $G$,
contradicting Claim \ref{ClNotCutxy}. So we conclude that $x_1$, and similarly, $y_1$ has no neighbors in $V'_2$. 

By coloring the vertices in $V'_2$ red, the vertices in $V'_{\geq 3}$ blue, and the vertices $x_1,y_1$ black, we get a colored link $L_1$. Notice that the neighbors of $x_1$ (or $y_1$) in $L_1-\{x_1,y_1\}$ form a clique. We see that the conditions (1)-(6) of Lemma \ref{LeUnfoldFe0} hold. It follows that $L_1$ is an unfoldment of a semi-loopless multi-graph $F=F(e_1)$. 

\begin{claim}
$F$ is 3-edge-connected; $F$ has at least two vertices of odd degree; and every 2-edge-connected proper subgraph $F'$ of $F$ containing $e_1$ has no vertices of odd degree.
\end{claim}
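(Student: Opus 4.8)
The plan is to establish the three assertions in turn, following the pattern of Case~1 but with the semi-loopless machinery (Lemmas~\ref{LeUnfoldFe0}--\ref{LeMinimalFe0}) replacing Lemmas~\ref{LeUnfold}--\ref{LeMinimalF}. For $3$-edge-connectedness I would appeal to Lemma~\ref{LePropertyFe0L}(2): it is enough to check that $L_1^{+}$ is $2$-connected and that every $2$-cut $\{x,y\}$ of $L_1^{+}$ with $x,y$ both blue or both black is an associated pair. Here $L_1^{+}-\{x_1,y_1\}$ is the interior of the link $L_1$ and hence connected, so $\{x_1,y_1\}$ is not a $2$-cut of $L_1^{+}$; and a cut vertex $v\neq x_1,y_1$ of $L_1^{+}$ would leave $x_1,y_1$ in a common component of $L_1^{+}-v$ (otherwise the edge $x_1y_1$ reconnects them), so some component of $L_1-v$ avoids $\{x_1,y_1\}$ and $v$ would be a cut vertex of $G$ --- impossible. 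The ``both black'' case is thus vacuous. A mixed $2$-cut $\{x_1,z\}$ (or $\{y_1,z\}$) of $L_1^{+}$ with $z$ blue would be a $2$-cut of $G$ with $z$ an inner vertex of $L_1$ and $x_1\in V(L_1^{*})$, contradicting Claim~\ref{ClNotCutxy}. Finally, a ``both blue'' $2$-cut $\{z,w\}$ of $L_1^{+}$ is a $2$-cut of $G$ separating off some vertex set $S$ lying in the interior of $L_1$, so $\langle\{z,w\}\cup S\rangle$ is a link of $G$ properly contained in $L_1$; by the minimality of $L_1$ it is simple, hence a pure link by Claim~\ref{ClSimplePure}, and therefore $z$ and $w$ are joined by a feasible path, i.e.\ they are associated. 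This yields the first assertion.

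The second assertion is quick: if $F$ had no vertex of odd degree it would admit an Euler tour, and then $L_1$ would have a Hamilton path from $x_1$ to $y_1$ by Lemma~\ref{LePropertyFe0L}(3), contradicting that $L_1$ is non-simple; since the number of odd-degree vertices is even, $F$ has at least two. For the third assertion I would first observe that $L_1^{*}$ is also non-simple, since it contains the minimal non-simple link $L_2$ and $\{x_2,y_2\}$ separates the interior of $L_2$ from the rest of $L_1^{*}$, so a Hamilton path of $L_1^{*}$ between $x_1$ and $y_1$ would restrict to a Hamilton path of $L_2$ between $x_2$ and $y_2$. Now suppose $F'$ is a $2$-edge-connected proper subgraph of $F$ containing $e_1$ with a vertex of odd degree; then $F'$ has at least two odd-degree vertices and no Euler tour. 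By Lemma~\ref{LeSubgraphFe0}, $L_1$ has a proper induced subgraph $L'_1\ni x_1,y_1$ that is an unfoldment of $F'$, and $L'_1$ is non-simple by Lemma~\ref{LePropertyFe0L}(3). Put $G':=\langle V(L'_1)\cup V(L_1^{*})\rangle_G=L'_1\cup L_1^{*}$, a proper induced subgraph of $G$. Since $F'$ is $2$-edge-connected, $(L'_1)^{+}$ is $2$-connected by Lemma~\ref{LePropertyFe0L}(1), and $(L_1^{*})^{+}$ is $2$-connected by the same argument that handled $L_1^{+}$; gluing these two along $\{x_1,y_1\}$ shows $G'$ is $2$-connected. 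Moreover $G'$ is not a cycle, as otherwise the induced subgraph of $G'$ on $V(L'_1)$ would be a Hamilton path of $L'_1$ from $x_1$ to $y_1$. It remains to show $G'$ has no spanning $\varTheta$-subgraph; this will contradict the minimality of $G$.

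For that last step, suppose $T$ is a spanning $\varTheta$-subgraph of $G'$. As $\{x_1,y_1\}$ is a $2$-cut of the $2$-connected graph $T$, put $T'=T[V(L'_1)]$ and $T''=T[V(L_1^{*})]$, so $T=T'\cup T''$ with $T'\cap T''$ consisting of $\{x_1,y_1\}$ and possibly the edge $x_1y_1$. A cyclomatic-number count --- $T$ being a $\varTheta$-graph has cyclomatic number $2$, and the union identity forces those of $T'$ and $T''$ to sum to $1$ (no shared edge) or to $2$ (shared edge) --- shows that in each case one of $T',T''$, together with the edge $x_1y_1$, is a cycle, being a side of a $2$-cut in a $2$-connected graph (hence $2$-connected after adding that edge) with cyclomatic number $1$; consequently that side is a Hamilton path of $L'_1$ (resp.\ $L_1^{*}$) between $x_1$ and $y_1$, making $L'_1$ (resp.\ $L_1^{*}$) simple --- contradicting that both are non-simple. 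Hence $G'$ has no spanning $\varTheta$-subgraph. I expect this last step, together with the non-simplicity of $L_1^{*}$, to be the main obstacle: one must rule out a spanning $\varTheta$ of $G'$ that ``hides'' a Hamilton cycle on one side of $\{x_1,y_1\}$ and a Hamilton path on the other, and the cyclomatic bookkeeping --- rather than an ad hoc analysis of how a $\varTheta$-graph can meet a $2$-cut --- is what makes this manageable.
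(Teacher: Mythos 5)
Your proof is correct. For the first two assertions you follow the paper's route essentially verbatim: verify the hypotheses of Lemma~\ref{LePropertyFe0L}(2) by turning $2$-cuts of $L_1^{+}$ into $2$-cuts of $G$ and invoking Claim~\ref{ClNotCutxy}, the minimality of $L_1$ and Claim~\ref{ClSimplePure}, and then read off the odd-degree vertices from the absence of an Euler tour via Lemma~\ref{LePropertyFe0L}(3). The divergence is in the third assertion. The paper forms the same graph $G'=L'_1\cup L_1^{*}$ and also gets a spanning $\varTheta$-subgraph $T$ from the minimality of $G$, but it derives the contradiction from the two internally disjoint non-simple links $L'_1$ and $L_2$ inside $G'$, asserting that $T$ must induce a Hamilton path between the end-vertices of one of them. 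You instead first prove that the whole co-link $L_1^{*}$ is non-simple (using $L_2$ precisely for this) and then run a cyclomatic-number count of $T$ across the single $2$-cut $\{x_1,y_1\}$ to force one side of $T$, plus the edge $x_1y_1$, to be a cycle, hence a Hamilton path of $L'_1$ or of $L_1^{*}$. Both arguments are sound; yours is more self-contained at the final step (the paper's ``it follows that $T$ contains either a Hamilton path of $L'_1$ \dots or of $L_2$'' is left without detail, and your bookkeeping supplies a clean substitute), at the cost of the extra observation about $L_1^{*}$. Two points worth tightening: when gluing $(L'_1)^{+}$ and $(L_1^{*})^{+}$ along $\{x_1,y_1\}$, note that $2$-connectedness of the union survives the possible absence of the edge $x_1y_1$ in $G$ (every component of a side minus a vertex still reaches $\{x_1,y_1\}$ without using the added edge); and in the shared-edge case of the cyclomatic count, observe that both sides are already $2$-connected graphs on at least three vertices containing $x_1y_1$, so both cyclomatic numbers are at least $1$ and hence both equal $1$, giving the desired cycle on each side.
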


\begin{proof}
Clearly $L_1+x_1y_1$ is 2-connected and $\{x_1,y_1\}$ is not a 2-cut of $L_1$. If $L_1+x_1y_1$ has a 2-cut $\{x,y\}$ with $x,y\in\{x_1,y_1\}\cup V_{\geq 3}$, then $\{x,y\}$ is also a 2-cut of $G$. It follows from Claim~6 that 
$\{x,y\}\cap\{x_1,y_1\}=\emptyset$, and then $x,y$ are connected by a link of $G$ contained in $L_1$, which is a simple link. That is, $x,y$ are associated. By Lemma \ref{LePropertyFe0L}, $F$ is 3-edge-connected. 

Recall that $L_1$ is non-simple, i.e., it has no Hamilton paths between $x_1$ and $y_1$. By Lemma \ref{LePropertyFe0L}, $F$ has no Euler tours, and then has at least two odd degree vertices. Suppose now that $F$ has a 2-edge-connected proper subgraph $F'$ containing $e_1$ with at least two odd degree vertices. By Lemma \ref{LeSubgraphFe0}, $L_1$ has a proper induced subgraph $L'_1=L'_1(x_1,y_1)$, which is an unfoldedment of $F'$. 
Since $F'$
has no Euler tours,
and by Lemma \ref{LePropertyFe0L}, $L'_1$ has no Hamilton paths between $x_1$ and $y_1$. Since $F'$ is 2-edge-connected, $L'_1+x_1y_1$ is 2-connected.

Let $G'=G-V(L_1)\setminus V(L'_1)$. Then $G'$ is 2-connected, and thus $G'$ contains a spanning $\varTheta$-subgraph $T$. Notice that $L'_1$ and $L_2$ are two internally-disjoint links of $G'$. It follows that $T$ contains either a Hamilton path of $L'_1$ from $x_1$ to $y_1$, or a Hamilton path of $L_2$ from $x_2$ to $y_2$, a contradiction. 
\end{proof}

By Lemma \ref{LePropertyFe0L}, $F=N_1(e_1)$ or $N_2(e_1)$. It follows that $L_1\in\mathcal{L}_1\cup\mathcal{L}_2\cup\mathcal{L}_3$. Similarly, we can prove that $L_2\in\mathcal{L}_1\cup\mathcal{L}_2\cup\mathcal{L}_3$.

Set $H=L_1^*\cap L_2^*$. We will show that $H$ is a chain from $\{x_1,y_1\}$ to $\{x_2,y_2\}$ and together with the possible edges $x_1y_1,x_2y_2$. Since $G$ is 2-connected, $G$ has two vertex-disjoint paths from $\{x_1,y_1\}$ to $\{x_2,y_2\}$. We choose such two paths $P_1,P_2$ with $|P_1|+|P_2|$ as small as possible. We suppose without loss of generality that $P_1$ is from $x_1$ to $y_1$ and $P_2$ is from $x_2$ to $y_2$. Notice that both $P_1,P_2$ are induced paths contained in $H$.

\begin{claim}\label{ClVHP1P2}
$V(H)=V(P_1)\cup V(P_2)$.
\end{claim}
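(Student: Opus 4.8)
The plan is to show that every vertex of $H = L_1^* \cap L_2^*$ lies on one of the two minimal disjoint paths $P_1, P_2$, by assuming otherwise and using the minimality of $G$ to build a spanning $\varTheta$-subgraph of $G$ — the forbidden conclusion. First I would record the structural facts already in hand: $L_1 \in \mathcal{L}_1 \cup \mathcal{L}_2 \cup \mathcal{L}_3$ and likewise $L_2$, so each $L_i$ has a Hamilton path between its two end-vertices $x_i$ and $y_i$ \emph{after} the edge $x_iy_i$ is added (i.e.\ $L_i^+ = L_i + x_iy_i$ is traceable from $x_i$ to $y_i$ through all of $V(L_i)$, even though $L_i$ itself is not). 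The subgraph $H$ is exactly $G$ with the interiors of both $L_1$ and $L_2$ deleted, with the four labeled vertices $x_1,y_1,x_2,y_2$ retained; and $P_1$ (from $x_1$ to $y_1$), $P_2$ (from $x_2$ to $y_2$) are vertex-disjoint induced paths in $H$ chosen to minimize $|P_1| + |P_2|$. Because $P_1, P_2$ are shortest such paths, all their internal vertices have degree $2$ in $H$, and in fact (since the interiors of $L_1, L_2$ have been removed and $G$ is claw-free) these internal vertices have degree $2$ in $G$ as well — a point I would verify carefully, since it is what lets me ``reroute'' through them.

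Suppose for contradiction that $V(H) \setminus (V(P_1) \cup V(P_2)) \neq \emptyset$. The idea is to form the proper induced subgraph $G' = G - (V(H) \setminus (V(P_1) \cup V(P_2)))$, i.e.\ keep $L_1$, $L_2$, $P_1$, $P_2$ and throw away the rest of $H$. I would first check $G'$ is $2$-connected: the two internally-disjoint paths $P_1, P_2$ together with $L_1$ and $L_2$ already give two internally-disjoint $x_1y_1$-paths and an $x_1y_1$-bridge structure, and one argues any would-be cut vertex of $G'$ is a cut vertex of $G$ (using that the discarded vertices form a union of "bridging" pieces of the chain-like $H$, attached only along $\{x_1,y_1,x_2,y_2\}$ and their clique-neighborhoods). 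Also $G'$ is not a cycle (it contains $L_1$, which is non-simple, hence is not a path or triangle, hence $G'$ has a vertex of degree $\ge 3$ inside $L_1$ together with enough structure to exceed a cycle). By minimality of $G$, $G'$ has a spanning $\varTheta$-subgraph $T'$. Now here is the rerouting: $\{x_1,y_1\}$ is a $2$-cut of $G'$ separating $\mathrm{int}(L_1)$ from the rest, so by Lemma~\ref{LeCutxy} $T'$ uses an edge at $x_1$ into $L_1$ and an edge at $y_1$ into $L_1$; since $L_1^+$ is traceable from $x_1$ to $y_1$, I can replace the portion of $T'$ inside $L_1$ by such a Hamilton path of $L_1^+$ — if this forces the edge $x_1y_1$ (present only when $L_1 \in \mathcal{L}_3$), that edge is genuinely in $G$, so it is legal. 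Do the same at $L_2$. This yields a $\varTheta$-subgraph of $G$ spanning $V(L_1) \cup V(L_2) \cup V(P_1) \cup V(P_2) = V(G')$. But $V(G') = V(G)$ only if nothing was discarded — contradiction, unless I instead argue that the discarded vertices can be \emph{reinserted}: every discarded vertex lies in a ``bridge'' region of $H$ whose attachment is a clique containing an edge of $P_1$ or $P_2$ or an edge at $x_i, y_i$ that $T'$ already uses, so I can splay each such region into $T'$ along that edge. I would phrase it as: reinsert, region by region, each discarded component of $H - (V(P_1)\cup V(P_2))$ into $T'$ by expanding a suitable edge of $T'$ lying in the corresponding clique into a path through the whole region (the region being itself traceable between its two attachment vertices, because it is part of a pure-chain/chain structure). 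The result spans $V(G)$, giving the forbidden spanning $\varTheta$-subgraph.

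The main obstacle I anticipate is not the rerouting inside $L_1, L_2$ (that is a clean application of Lemma~\ref{LeCutxy} plus traceability of $\mathcal{L}_1, \mathcal{L}_2, \mathcal{L}_3$-links), but rather controlling the structure of $H$ well enough to guarantee: (a) the discarded part of $H$ decomposes into pieces each attached to $G'$ along a clique that meets an edge of $T'$ available for expansion, and (b) each such piece is traceable between its two clique-attachment vertices. Establishing (a)–(b) is essentially showing $H$ is already ``chain-like'' — but that is precisely what the claim is a stepping stone toward, so I must be careful to use only what is available \emph{before} knowing $H$ is a chain: namely that $G$ is claw-free, $2$-connected, minimal, that $x_1,y_1$ (resp.\ $x_2,y_2$) have all their $H$-neighbors forming a clique (claw-freeness), and that $P_1, P_2$ are shortest. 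The correct formulation is likely to delete one vertex at a time (or one small piece at a time) rather than all of $H \setminus (P_1 \cup P_2)$ at once, so that at each stage the relevant attachment is a single clique; I expect the write-up to isolate a vertex $v \in V(H) \setminus (V(P_1) \cup V(P_2))$, find via claw-freeness and the choice of $P_1, P_2$ that $N_G(v)$ behaves well, delete $v$, invoke minimality, and reroute through $v$ using a single clique edge — then iterate.
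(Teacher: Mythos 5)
Your opening move coincides with the paper's: form $G'=G-V(H)\setminus(V(P_1)\cup V(P_2))$, observe that $G'$ is a $2$-connected proper induced subgraph of $G$ that is not a cycle (it contains the non-simple link $L_1$), and invoke minimality to obtain a spanning $\varTheta$-subgraph $T$ of $G'$. From there, however, you argue in the wrong direction. The contradiction is already contained in $T$ itself: in $G'$ the links $L_1(x_1,y_1)$ and $L_2(x_2,y_2)$ are internally disjoint, a $\varTheta$-graph has only two branch vertices, and these cannot be distributed so that both the interior of $L_1$ and the interior of $L_2$ contain one (three internally disjoint paths between them would have to cross the two-vertex cut $\{x_1,y_1\}$). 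Whichever interior contains no branch vertex is covered by a single segment of $T$ entering at one end-vertex of that link and leaving at the other, so $T$ contains a Hamilton path of $L_1$ from $x_1$ to $y_1$ or of $L_2$ from $x_2$ to $y_2$ --- contradicting the non-simplicity of $L_1$ and $L_2$. No rerouting inside the $L_i$ and no reinsertion of the deleted vertices is needed.

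Your substitute for this step fails on two counts. First, its premise is false: $L_i+x_iy_i$ is \emph{not} traceable from $x_i$ to $y_i$. A Hamilton path between the two end-vertices of a link on at least three vertices never uses the edge joining them, so adding $x_iy_i$ cannot create one; and the links in $\mathcal{L}_1\cup\mathcal{L}_2\cup\mathcal{L}_3$ are non-simple by construction (their multigraphs $N_1(e_0)$, $N_2(e_0)$ each have two vertices of odd degree, hence no Euler tour, cf.\ Lemma~\ref{LePropertyFe0L}(3)). So the step ``replace the portion of $T$ inside $L_1$ by a Hamilton path of $L_1^+$'' cannot be carried out. Second, even granting such a rerouting, reinserting the discarded part of $H$ requires exactly the clique-attached, traceable-piece decomposition of $H$ that Claims~\ref{ClCrossed}--\ref{Clx1y1} establish only afterwards; as you yourself concede, invoking it here is circular. (A further slip: the internal vertices of $P_1,P_2$ need not have degree $2$ in $H$ or in $G$ --- in the eventual chain structure many of them lie in $4$-cliques --- so the degree-$2$ fact you plan to ``verify carefully'' is not available either.)
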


\begin{proof}
Suppose, otherwise, that $V(H)\backslash(V(P_1)\cup V(P_2))\neq\emptyset$. Let $G'=G-V(H)\setminus (V(P_1)\cup V(P_2))$. Then $G'$ is a 2-connected proper induced subgraph of $G$. It follows that $G'$ has a spanning $\varTheta$-subgraph $T$. Notice that $T$ contains either a Hamilton path of $L_1$ from $x_1$ to $y_1$, or a Hamilton path of $L_2$ from $x_2$ to $y_2$. Thus either $L_1$ or $L_2$ is a simple link of $G$, a contradiction.  
\end{proof}

Two edges $u_1v_1$ and $u_2v_2$ from $P_1$ to $P_2$ are \emph{crossed} if $u_1$ appears before $u_2$ in $P_1$ and $v_2$ appears before $v_1$ in $P_2$ (or $u_1$ appears after $u_2$ in $P_1$ and $v_2$ appears after $v_1$ in $P_2$). For every vertex $v$ in $P_1$ or $P_2$, we denote $v^-$ and $v^+$ the predecessor and successor of $v$ along $P_1$ or $P_2$. For the case $v\in\{x_1,y_1,x_2,y_2\}$, we let $x_1^-,y_1^-$ be two neighbors of $x_1,y_1$ in $L_1-\{x_1,y_1\}$, respectively; and $x_2^+,y_2^+$ be two neighbors of $x_2,y_2$ in $L_2-\{x_2,y_2\}$, respectively.

\begin{claim}\label{ClCrossed}
If $u_1v_1$ and $u_2v_2$ are two crossed edges from $P_1$ to $P_2$, then $u_2=u_1^+$ and $v_2=v_1^-$ (or $u_2=u_1^-$ and $v_2=v_1^+$), and $u_1v_2,u_2v_1\in E(G)$. Consequently,
$\langle\{u_1,u_2,v_1,v_2\}\rangle_G$
is a 4-clique.
\end{claim}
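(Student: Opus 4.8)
The plan is to use the minimality of $|P_1|+|P_2|$ twice: once to force $u_1,u_2$ and $v_1,v_2$ to be consecutive on their paths, and a second time (after rerouting) to rule out the ``long'' diagonals, after which claw-freeness supplies the missing edges. Assume without loss of generality that $u_1$ precedes $u_2$ along $P_1$ (oriented from $x_1$ to $y_1$) and $v_2$ precedes $v_1$ along $P_2$ (oriented from $x_2$ to $y_2$); the other orientation in the definition of ``crossed'' is symmetric. Replacing the two crossed edges by the two ``uncrossed'' ones produces the vertex-disjoint paths $Q_1=P_1[x_1,u_1]+u_1v_1+P_2[v_1,y_2]$ and $Q_2=P_1[y_1,u_2]+u_2v_2+P_2[v_2,x_2]$, each of which joins $\{x_1,y_1\}$ to $\{x_2,y_2\}$. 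Counting vertices gives $|Q_1|+|Q_2|=|P_1|+|P_2|+2-d_{P_1}(u_1,u_2)-d_{P_2}(v_1,v_2)$, so minimality forces $d_{P_1}(u_1,u_2)=d_{P_2}(v_1,v_2)=1$, i.e. $u_2=u_1^{+}$ and $v_2=v_1^{-}$. In particular the path edges $u_1u_2$ and $v_1v_2$ are present.

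Next I would extract the two ``diagonal'' edges $u_1v_2$ and $u_2v_1$ from claw-freeness. For $u_1v_2$, examine $v_1$ together with its neighbours $v_2\,(=v_1^{-})$, $v_1^{+}$ and $u_1$. The pair $v_2,v_1^{+}$ is non-adjacent: if $v_1$ is internal on $P_2$ this is because $P_2$ is induced, and if $v_1=y_2$ it is because $v_2\in V(L_2^{*})\setminus\{x_2,y_2\}$ while $v_1^{+}=y_2^{+}\in V(L_2)\setminus\{x_2,y_2\}$ lie on opposite sides of the $2$-cut $\{x_2,y_2\}$. The pair $u_1,v_1^{+}$ is also non-adjacent: if $v_1^{+}$ lies on $P_2$, an edge $u_1v_1^{+}$ would make $u_1v_1^{+}$ and $u_2v_2$ a crossed pair, and the first step applied to that pair would force $v_2=(v_1^{+})^{-}=v_1$, a contradiction; if $v_1=y_2$ it follows again from the $2$-cut $\{x_2,y_2\}$, since $u_1\in V(H)\setminus\{x_2,y_2\}\subseteq V(L_2^{*})\setminus\{x_2,y_2\}$. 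Hence claw-freeness at $v_1$ forces $u_1v_2\in E(G)$. Symmetrically, examining $u_1$ together with its neighbours $u_1^{-}$, $u_2\,(=u_1^{+})$ and $v_1$ — and using the $2$-cut $\{x_1,y_1\}$ in place of $\{x_2,y_2\}$ when $u_1=x_1$ — yields $u_2v_1\in E(G)$. Combining these two diagonals with the two path edges $u_1u_2,v_1v_2$ and the two given crossed edges $u_1v_1,u_2v_2$ shows that all six pairs inside $\{u_1,u_2,v_1,v_2\}$ are adjacent, so $\langle\{u_1,u_2,v_1,v_2\}\rangle_G$ is a $4$-clique.

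The part I expect to be the main obstacle is the boundary bookkeeping rather than the core idea. When $u_1$ or $u_2$ equals $x_1$ or $y_1$ (respectively $v_1$ or $v_2$ equals $x_2$ or $y_2$), the ``path neighbour'' used in the claw argument must instead be a neighbour inside the link $L_1$ (or $L_2$), and one has to use repeatedly that no vertex of $L_1-\{x_1,y_1\}$ is adjacent to a vertex of $H\setminus\{x_1,y_1\}$, and similarly for $L_2$ — this is precisely where it matters that $H$ is sandwiched between the co-links $L_1^{*}$ and $L_2^{*}$. The most delicate sub-case is the degenerate $P_1=x_1y_1$ (so $x_1y_1\in E(G)$ and $\{u_1,u_2\}=\{x_1,y_1\}$): there the diagonal $u_2v_1$ cannot be read off from a claw centred at a vertex of $P_1$, and one instead centres a claw at a vertex of $P_2$ adjacent to $v_1$ (or at $x_2$, using a neighbour of $x_2$ inside $L_2$, which exists because the non-simple link $L_2$ is not a pure link), once more invoking non-adjacency of $L_2-\{x_2,y_2\}$ to $H\setminus\{x_2,y_2\}$ and the first step to rule out spurious edges. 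All of these reductions rely only on claw-freeness, the minimality of $|P_1|+|P_2|$, and the location of $P_1,P_2$ inside $H=L_1^{*}\cap L_2^{*}$.
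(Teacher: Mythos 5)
Your argument is correct and matches the paper's proof in both of its essential steps: first the crossing-swap plus minimality of $|P_1|+|P_2|$ to force $u_2=u_1^+$ and $v_2=v_1^-$, then a claw centred at one of the four vertices whose third leaf is a path- (or link-) neighbour, with the two unwanted non-edges supplied by inducedness and by the fact that any further crossing would contradict the first step. The only differences are cosmetic — the paper centres its claw at $u_2$ with leaves $u_1,u_2^+,v_2$ rather than at $v_1$ and $u_1$, and you spell out the boundary cases at $\{x_1,y_1\},\{x_2,y_2\}$ (and inherit the paper's mislabelled orientation of $P_1,P_2$) more explicitly than the paper does.
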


\begin{proof}
Suppose that either $|P_1[u_1,u_2]|\geq 2$ or $|P_2[v_2,v_1]|\geq 2$. Let $P'_1=P_1[x_1,u_1]u_1v_1P_2[v_1,y_2]$ and $P'_2=P_2[y_1,v_2]v_2u_2P_1[u_2,x_2]$. It follows that $P'_1,P'_2$ are two vertex-disjoint paths from $\{x_1,y_1\}$ to $\{x_2,y_2\}$ with $|P'_1|+|P'_2|<|P_1|+|P_2|$, contradicting the choices of $P_1$ and $P_2$. Thus we conclude that $u_2=u_1^+$ and $v_2=v_1^-$.

Recall that $P_1,P_2$ are induced paths. We have $u_1u_2^+\notin E(G)$. If $u_2^+v_2\in E(G)$, then $u_1v_1$ and $u_2^+v_2$ are crossed and $|P_1[u_1,u_2^+]|\geq 2$, a contradiction. Thus we have $u_2^+v_2\notin E(G)$, and then $u_1v_2\in E(G)$, since otherwise 
$\langle \{u_1,u_2,u_2^+,v_2\}\rangle$
is a claw. Similarly we can prove that $u_2v_1\in E(G)$.
\end{proof}

By Claim \ref{ClCrossed}, the only possible edge crossed with $u_1v_1$ is either $u_1^-v_1^+$ or $u_1^+v_1^-$, but not both. It follows that every edge from $P_1$ to $P_2$ is crossed with at most one other edges. We label the vertices $x_1=a_1,c_1,a_2,c_2,\ldots,a_k,c_k=x_2\in V(P_1)$, $y_1=b_1,d_1,b_2,d_2,\ldots,b_k,d_k=y_2\in V(P_2)$, such that $c_ib_{i+1},d_ia_{i+1}$, $i=1,\ldots,k-1$, are all the crossed pairs from $P_1$ to $P_2$. We assume that $a_1,c_1,a_2,c_2,\ldots,a_k,c_k$ appear in this order along $P_1$ and $b_1,d_1,b_2,d_2,\ldots,b_k,d_k$ appear in this order along $P_2$. 

By Claim \ref{ClCrossed}, $a_{i+1}=c_i^+$, $b_{i+1}=d_i^+$, and $\langle \{c_i,d_i,a_{i+1},b_{i+1}\}\rangle$ is a 4-clique for $i=1,\ldots,k-1$. We remark that possibly $a_i=c_i$ or $b_i=d_i$. Set $H_i=\langle V(P_1[a_i,c_i])\cup V(P_2[b_i,d_i])\rangle$. Now $H_i$ contains no edges from $P_1$ to $P_2$ that are crossed with some edges. This implies that 
$H_i$ cannot contain a $4$-clique.

\begin{claim}\label{ClPureChainHi}
$H_i$ is a pure chain from $\{a_i,b_i\}$ to $\{c_i,d_i\}$ which possibly contains the edges
$a_ib_i,c_id_i$.      
\end{claim}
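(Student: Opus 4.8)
The goal is to show that each $H_i=\langle V(P_1[a_i,c_i])\cup V(P_2[b_i,d_i])\rangle$ is a pure chain — that is, either a bipath chain (two disjoint paths) or a triangle chain — from $\{a_i,b_i\}$ to $\{c_i,d_i\}$, together with the possible edges $a_ib_i$ and $c_id_i$. The essential structural input we already have is: the two induced paths $Q_1:=P_1[a_i,c_i]$ and $Q_2:=P_2[b_i,d_i]$ are vertex-disjoint and cover $V(H_i)$; $H_i$ contains no edge from $Q_1$ to $Q_2$ crossed with another such edge; hence $H_i$ contains no $4$-clique; and (from Claim~\ref{ClCrossed}) the crossed-pair analysis is available inside $G$. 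So the task is to understand exactly how $Q_1$ and $Q_2$ can be joined by edges, subject to claw-freeness of $G$ and the no-crossed-edge condition inside $H_i$.

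**Key steps.** First I would fix notation: write $Q_1=a_i=p_0p_1\cdots p_s=c_i$ and $Q_2=b_i=q_0q_1\cdots q_t=d_i$, and consider the set $E^{\times}$ of edges of $H_i$ joining a $p$-vertex to a $q$-vertex. The first step is to show that \emph{no two edges of $E^{\times}$ are ``nested or crossed'' in the forbidden way}: since $H_i$ has no crossed pair, any two edges $p_ap_{a'}q_b$... more precisely two edges $p_aq_b,p_{a'}q_{b'}$ with $a<a'$ must have $b\le b'$ — they are ``parallel''. (If $b>b'$ they would be crossed, contradiction.) Second, using claw-freeness: consider a vertex $p_a$ ($0<a<s$) with neighbours $p_{a-1},p_{a+1}$ on $Q_1$; since $p_{a-1}p_{a+1}\notin E(G)$ ($Q_1$ induced), any further neighbour $w$ of $p_a$ must be adjacent to $p_{a-1}$ or $p_{a+1}$. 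Iterating this forces every cross-edge at $p_a$ to ``come in a short bundle'' — the classical argument that produces, along consecutive vertices, either no cross-edges (the two paths simply run independently, a bipath chain) or a run of triangles. Concretely, I would argue: if $p_aq_b\in E(H_i)$ with $a,b$ internal, then claw-freeness at $p_a$ (applied to $q_b,p_{a-1}$ or $p_{a+1}$) forces one of $q_bp_{a-1},q_bp_{a+1}$, and symmetrically at $q_b$; chasing these adjacencies, and invoking the no-$4$-clique / no-crossed-pair constraints to rule out the branches that would create a $4$-clique or a crossed pair, pins down the local picture to exactly the triangle-chain pattern ``$r_j$ adjacent to $r_{j'}$ iff $|j-j'|\le 2$'' when the vertices of $Q_1$ and $Q_2$ are interleaved into a single sequence $r_1,\dots,r_{s+t+2}$. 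Third, I would handle the boundary: the endpoints $a_i,b_i$ (origin) and $c_i,d_i$ (terminus) are allowed to carry the edges $a_ib_i$, $c_id_i$, which is why the statement says ``possibly contains the edges $a_ib_i,c_id_i$''; these are precisely the cross-edges permitted at the two ends, and one checks they do not create a $4$-clique because the adjacent interior vertices cannot all be mutually joined (again no crossed pair inside $H_i$). Finally, assembling: if $E^{\times}$ has no interior edge, $H_i=Q_1\cup Q_2$ (plus possibly the two end edges) is a bipath chain; otherwise the interior cross-edges, by the local analysis, tile a contiguous block as a triangle chain, and the lengths on either side forced by ``$a_i=c_i$ or $b_i=d_i$ possible'' match the definition of a triangle chain ($k\ge 3$ vertices with the $j=i+1,i+2$ adjacency). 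One should also double-check $2$-connectedness/degree-$2$ considerations are not needed here — the claim is purely about the induced structure of $H_i$.

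**Main obstacle.** The delicate part will be the case analysis ruling out ``hybrid'' configurations: e.g. showing that one cannot have a short run of triangles, then a gap where the two paths run independently, then triangles again \emph{within a single $H_i$} — this is exactly forbidden because the definition of a pure chain is ``all bipath'' or ``all triangle'', and the mixed object would be a genuine chain (with $4$-cliques at the junctions), contradicting that $H_i$ contains no $4$-clique. So the heart of the proof is the dichotomy: \emph{the moment a single interior cross-edge exists, claw-freeness propagates it to a maximal contiguous triangle run, and that run must be all of the interior of $H_i$} — otherwise at its boundary we would be forced (by the very claw-freeness propagation plus the induced-path condition on $Q_1,Q_2$) either to extend the run further or to create a $4$-clique. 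I expect this propagation-and-termination argument to require the most care, and I would organize it as a short induction ``if $p_aq_b$ is a cross-edge then so is one of $p_{a\pm1}q_{b}$ or $p_aq_{b\pm1}$, unless $a$ or $b$ is already at an endpoint of $H_i$'', whose base case and termination at the endpoints give exactly the triangle-chain shape. The remaining verifications — that the end edges $a_ib_i,c_id_i$ are the only tolerated extras, and that the resulting graph literally matches the bipath-chain or triangle-chain definition — are routine once the dichotomy is in hand.
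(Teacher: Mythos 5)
Your proposal follows essentially the same route as the paper's proof: the dichotomy between no cross-edges (so $H_i$ minus the possible edges $a_ib_i,c_id_i$ is a bipath chain) and a first cross-edge whose existence, via claw-freeness together with the absence of crossed pairs inside $H_i$, propagates the triangle pattern from $\{a_i,b_i\}$ all the way to $\{c_i,d_i\}$. One small correction to your ``main obstacle'' discussion: a hybrid configuration (a triangle run that does not start at the origin or stops before the terminus) is ruled out not by a $4$-clique at the junction but by an induced claw there (at the boundary vertex of the run, its path-predecessor, its cross-neighbour and its path-successor form a claw), which is precisely how the paper shows that the first cross-edge $uv$ forces $u^-v$ or $uv^-$ to be an edge and hence, by the minimal choice of $uv$, to equal $a_ib_i$.
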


\begin{proof}
If there are no edges from $P_1[a_i,c_i]$ to $P_2[b_i,d_i]$ other than the possible edges $a_ib_i,c_id_i$, then $H_i-\{a_ib_i,c_id_i\}$ is a bipath chain, and we are done. Now suppose there is an edge $uv$ from $P_1[a_i,c_i]$ to $P_2[b_i,d_i]$ other than the possible edges $a_ib_i,c_id_i$. 
 We can choose $uv$ such that there are no edges from $P_1[a_i,u]$ to $P_2[b_i,v]$ other than $a_ib_i$ and $uv$. That is, $uv$ is the first edge 
 from $P_1[a_i,c_i]$ to $P_2[b_i,d_i]$
except for the possible edge $a_ib_i$.

We claim that either $u^-v\in E(G)$ or $uv^-\in E(G)$. Suppose that $u^-v,uv^-\notin E(G)$. Notice that $u^-v^+\notin E(G)$ and $u^+v^-\notin E(G)$; for otherwise they will be crossed with $uv$. Thus $uv^+\in E(G)$ and $u^+v\in E(G)$ for otherwise 
$\langle \{v^-,v,v^+,u\} \rangle $ or
$\langle \{u^-,u,u^+,v\} \rangle $
is a claw. So  $uv^+$ crosses $u^+v$.
Since $H_i$ contains no crossing pairs of edges 
from $P_1$ to $P_2$, it follows that 
$uv=c_id_i$, contradicting
the choice of $uv$.

Without loss of generality we assume that $u^-v\in E(G)$. We here show that $u\neq a_i$. Suppose otherwise that $u=a_i$, and thus $v\neq b_i$. If $i=1$, then $u=x_1$ and $u^-\in V(L_1)\backslash\{x_1,y_1\}$, which is nonadjacent to any vertex in $V(P_2)\backslash\{y_1\}$, a contradiction. If $i\geq 2$, then $u^-=c_{i-1}$, and thus $u^-v$ and $a_ib_i$ are crossed, a contradiction. Now we conclude that $u\neq a_i$, and $u^-\in V(H_i)$. By the choice of $uv$, we have that $u^-v=a_ib_i$, i.e., $u^{-}=a_i$, $v=b_i$. 

We have that that $uv^-\notin E(G)$; for otherwise, it is crossed with $a_ib_i$. By the claw-freeness of $G$, we have $uv^+\in E(G)$. Now we show that $v^+\in V(H_i)$. If $v^+\notin V(H_i)$, then $v=d_i$, implying that $u\neq c_i$. It follows that $uv^+$ is crossed with $c_id_i$, a contradiction.

Set $P_1[a_i,c_i]=u_1u_2\ldots u_s$ and $P_2[b_i,d_i]=v_1v_2\ldots v_t$,
where $u_1=u^-=a_i,v_1=v=b_i,u_2=u,v_2=v^+$.
We already know that $u_1v_1,u_2v_1,u_2v_2\in E(G)$. By a similar progress, we can prove that: (1) if $u_{j}v_{j-1}\neq c_id_i$, then $u_jv_j\in E(G)$ and $v_j\in V(H_i)$, and (2) if $u_jv_j\neq c_id_i$, then $u_{j+1}v_{j}\in E(G)$ and $u_{j+1}\in V(H_i)$. It follows that $H_i$ is a triangle chain from $\{a_i,b_i\}$ to $\{c_i,d_i\}$.
\end{proof}

By Claim \ref{ClPureChainHi}, we see that $H$ is a chain from $\{x_1,y_1\}$ to $\{x_2,y_2\}$ which possibly contains the edges $x_1y_1,x_2y_2$. 
Assume first that $x_1y_1\notin E(G)$. Then $L_1\in\mathcal{L}_2$, and $H_1$ is a bipath chain. Moreover, if $H_1$ is a trivial chain, then $k=1$, and both $L_1$ and $L_2$ are in $\mathcal{L}_2$. For the case $x_1y_1\in E(G)$, we prove the following.

\begin{claim}\label{Clx1y1}
If $x_1y_1\in E(G)$, then either (1) $L_1\in\mathcal{L}_1$; or (2) $H_1-x_1y_1$ is a trivial chain and $k\geq 2$; or (3) $H_1-x_1y_1$ is a trivial chain, $k=1$, and $L_2\in\mathcal{L}_1$.
\end{claim}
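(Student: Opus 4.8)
The plan is to reduce at once to the case $L_1\in\mathcal L_3$ and then to extract from claw-freeness a rigid description of the neighbourhoods of $x_1$ and $y_1$. Since $L_1\in\mathcal L_1\cup\mathcal L_2\cup\mathcal L_3$ and every link in $\mathcal L_2$ has non-adjacent end-vertices, the hypothesis $x_1y_1\in E(G)$ forces $L_1\in\mathcal L_1\cup\mathcal L_3$; if $L_1\in\mathcal L_1$ we are in case (1), so from now on $L_1\in\mathcal L_3$, and I write $\langle\{x_1,a,b\}\rangle$, $\langle\{y_1,a',b'\}\rangle$ for its two triangles. What remains to be shown is (A) $H_1-x_1y_1$ is a trivial chain, and (B) if in addition $k=1$ then $L_2\in\mathcal L_1$.

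The first key step is a twin-neighbourhood lemma. Let $w\in N_G(x_1)\setminus V(L_1)$; then $w$ lies in the component of $G-\{x_1,y_1\}$ not meeting $L_1-\{x_1,y_1\}$, so $w\not\sim a$, while also $y_1\not\sim a$ by the structure of $\mathcal L_3$. If moreover $w\not\sim y_1$ then $\langle\{x_1,a,w,y_1\}\rangle$ is an induced claw; hence $w\sim y_1$. Thus $N_G(x_1)\setminus V(L_1)\subseteq N_G(y_1)$, and symmetrically, using $a'$, $N_G(y_1)\setminus V(L_1)\subseteq N_G(x_1)$, so $S:=N_G(x_1)\setminus V(L_1)=N_G(y_1)\setminus V(L_1)$. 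A further application of claw-freeness at $x_1$, whose neighbourhood is $\{a,b,y_1\}\cup S$, shows that $\langle S\rangle$ is a clique (a non-edge $ss'$ in $S$ would give the induced claw $\langle\{x_1,a,s,s'\}\rangle$), and $S\neq\emptyset$ because $\{x_1,y_1\}$ is a $2$-cut of $G$.

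Next I would prove (A). Assume $H_1-x_1y_1$ is not trivial, let $p$ be the vertex of $P_1$ following $x_1=a_1$ and $q$ the vertex of $P_2$ following $y_1=b_1$ (these exist since $c_1\neq x_1$ or $d_1\neq y_1$), and note $p,q\in S$ and $p\neq q$ as they lie on the vertex-disjoint paths $P_1,P_2$; hence $p\sim q$, $p\sim y_1$, $q\sim x_1$. If both $c_1\neq x_1$ and $d_1\neq y_1$, then $p\in V(P_1[x_1,c_1])$ and $q\in V(P_2[y_1,d_1])$, so $x_1,y_1,p,q$ are four distinct pairwise-adjacent vertices of $H_1$, i.e.\ $\langle\{x_1,y_1,p,q\}\rangle=K_4$; but by Claim~\ref{ClPureChainHi}, $H_1$ is a pure chain plus possibly the edges $x_1y_1$ and $c_1d_1$, and no such graph contains $K_4$ (a bipath chain is a disjoint union of two paths, a triangle chain has edges $v_iv_j$ only for $|i-j|\le2$, and the two boundary edges create none), a contradiction. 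So one boundary path of $H_1$ is trivial, say $d_1=y_1$ and $c_1\neq x_1$. Then $q$ lies outside $H_1$ and is joined to $p\in V(H_1)$ by the edge $pq$; since by Claim~\ref{ClCrossed} the only edges joining $H_1$ to $H_2$ lie in the gluing $4$-clique $\langle\{c_1,d_1,a_2,b_2\}\rangle$ (when $k\ge2$), this forces $p=c_1$ and hence $c_1\sim y_1$, whereupon $\langle\{y_1,x_1,a',a_2\}\rangle$ is an induced claw: $x_1,a',a_2\in N(y_1)$, while $x_1\not\sim a'$ by the structure of $\mathcal L_3$, $x_1\not\sim a_2$ because $x_1=a_1$ and $a_2$ are non-consecutive on the induced path $P_1$, and $a'\not\sim a_2$ because they lie on opposite sides of the $2$-cut $\{x_1,y_1\}$. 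The remaining sub-cases, where the trivial boundary path is the whole of $P_2$ and $k=1$, are handled by analogous claws at $x_1$ or $y_1$, using in place of $a_2$ a vertex of $L_2$ incident with $y_2=y_1$ (and, if needed, the $x_1\leftrightarrow y_1$, $P_1\leftrightarrow P_2$ relabelling). Hence $H_1-x_1y_1$ is trivial.

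For (B), if $k=1$ and $H_1-x_1y_1$ is trivial then $H=H_1$ is the single edge $x_1y_1$, $\{x_1,y_1\}=\{x_2,y_2\}$ and $G=L_1\cup L_2$; since $x_1y_1\in E(G)$ and $L_2$ is an induced link of $G$ with end-vertices $x_1,y_1$, we get $L_2\in\mathcal L_1\cup\mathcal L_3$. If $L_2\in\mathcal L_3$, let $\langle\{x_1,a_2,b_2\}\rangle$ be its triangle at $x_2=x_1$; then $a\not\sim a_2$ (opposite sides of $\{x_1,y_1\}$), $a\not\sim y_1$ and $a_2\not\sim y_1$ (structure of $\mathcal L_3$), so $\langle\{x_1,a,a_2,y_1\}\rangle$ is an induced claw, a contradiction, and therefore $L_2\in\mathcal L_1$. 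The part I expect to be the main obstacle is the end of the proof of (A): cataloguing, via claws centred at $x_1$ or $y_1$, all the degenerate configurations in which exactly one of $H_1$'s boundary paths fails to be trivial, and treating $k=1$ and $k\ge2$ (as well as the possibilities $a_i=c_i$ or $b_i=d_i$) uniformly. In every such configuration the claw is produced from a vertex of $L_1$'s "far" triangle, the partner end-vertex of $L_1$, and a vertex of $G$ lying just beyond along the chain or inside $L_2$, which are pairwise non-adjacent precisely because they sit on three different sides of the relevant $2$-cuts.
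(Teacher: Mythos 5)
Your proof is correct, but it reaches the conclusion by a different decomposition than the paper's. The paper splits directly on the type of the pure chain $H_1$ (triangle chain, nontrivial bipath chain, trivial chain) and in each nontrivial case exhibits the single claw $\langle\{x_1,x_1^-,y_1,x_1^+\}\rangle$, where $x_1^-$ lies in $L_1-\{x_1,y_1\}$ and $x_1^+$ is the successor of $x_1$ on $P_1$ (or, in the degenerate cases, a neighbour of $x_2$ in $L_2-\{x_2,y_2\}$); the only work is verifying $x_1^+y_1\notin E(G)$ case by case. You instead first establish a twin-neighbourhood lemma --- $N(x_1)\setminus V(L_1)=N(y_1)\setminus V(L_1)=:S$ is a nonempty clique --- and then, in the main case, produce a $4$-clique on $\{x_1,y_1,p,q\}$ inside $H_1$, contradicting Claim~\ref{ClPureChainHi}. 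This is a genuinely different and somewhat more conceptual way of disposing of the triangle-chain and nontrivial-bipath cases at one stroke, bought at the price of a longer tail of degenerate sub-cases that you only sketch; I checked that the claws you gesture at do exist (for $k=1$ with $y_1=y_2$ and $c_1\neq x_1$, take $\langle\{y_1,x_1,a',q\}\rangle$ with $q\in N_{L_2}(y_2)\setminus\{x_2,y_2\}$, noting $x_1\not\sim q$ because $\{x_2,y_2\}$ separates them; the mirror cases are analogous), so there is no gap. Two small points to tighten: the parenthetical ``these exist since $c_1\neq x_1$ or $d_1\neq y_1$'' only guarantees that at least one of $p,q$ exists (both exist automatically when $k\geq 2$, and the $k=1$ cases with one of $P_1,P_2$ entirely trivial are exactly your deferred sub-cases); and the statement that all edges between $H_1$ and $H_2$ lie in the gluing $4$-clique is not literally Claim~\ref{ClCrossed} but follows from the labelling convention, since any edge from $P_1[a_1,c_1]$ to $P_2[b_2,d_2]$ would be crossed with $d_1a_2$ and hence must equal $c_1b_2$. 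Incidentally, in the sub-case $d_1=y_1$, $c_1\neq x_1$, $k\geq 2$ the detour through $p=c_1$ is unnecessary: the claw $\langle\{y_1,x_1,a',a_2\}\rangle$ already works because $c_1\neq x_1$ forces $a_2$ to be at distance at least $2$ from $x_1$ on the induced path $P_1$.
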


\begin{proof}
Since $x_1y_1\in E(G)$, we have $L_1\in\mathcal{L}_1\cup\mathcal{L}_3$. If $L_1\in\mathcal{L}_1$, then (1) holds. So we assume that $L_1\in\mathcal{L}_3$. 

By Claim~10, $H_1$ is a pure chain.
Furthermore, if $H_1$ is a triangle chain, then either $x_1y_1^+\in E(G)$ or $x_1^+y_1\in E(G)$ but not both. Without loss of generality we assume that $x_1y_1^+\in E(G)$ and $x_1^+y_1\notin E(G)$. It follows that $\langle \{x_1,x_1^-,y_1,x_1^+\}\rangle$ is a claw, a contradiction. 

Suppose that $H_1$ is a nontrivial bipath chain which contains the edge $x_1y_1$ and the possible edge $c_1d_1$. Without loss of generality we assume that $P_2[y_1,d_1]$ is nontrivial, then $x_1^+y_1\notin E(G)$
since $H_1$ is a nontrivial bipath chain.
Now $\langle \{x_1,x_1^-,y_1,x_1^+\}\rangle$ is a claw, a contradiction. 

So we conclude that $H_1$ is a trivial chain which contains the edge $x_1y_1$. If $k\geq 2$, then (2) holds. So we assume that $k=1$, i.e., $x_1=x_2$, $y_1=y_2$. If $L_2\in\mathcal{L}_3$ as well, then $\langle \{x_1,x_1^-,y_1,x_1^+\}\rangle$ is a claw, a contradiction. So we conclude that $L_2\in\mathcal{L}_1$ and (3) holds.    
\end{proof}

We note that the  edge $x_1y_1$ in Claim \ref{Clx1y1} can be considered as an edge in $L_1$ for (1), an edge in 4-clique $\langle \{c_1,d_1,a_2,b_2\}\rangle$ for (2), and an edge in $L_2$ for (3). Thus we always regard $L_1$ a link in $\mathcal{L}_1\cup\mathcal{L}_2$. It follows that if $L_1\in\mathcal{L}_2$, then $H_1$ is a bipath chain. By symmetry, we can prove that if $L_2\in\mathcal{L}_2$, then $H_k$ is a bipath chain. Thus $G\in\mathcal{H}_5\cup\mathcal{H}_6\cup\mathcal{H}_7$.

The proof is complete.

\section{Proofs of Observation~\ref{Th1} and Theorem~\ref{Th2}}

We construct nine 2-connected non-cycle graphs $G_1,\cdots,G_9$ without spanning $\varTheta$-subgraphs, as shown in Figure \ref{G12345678}.

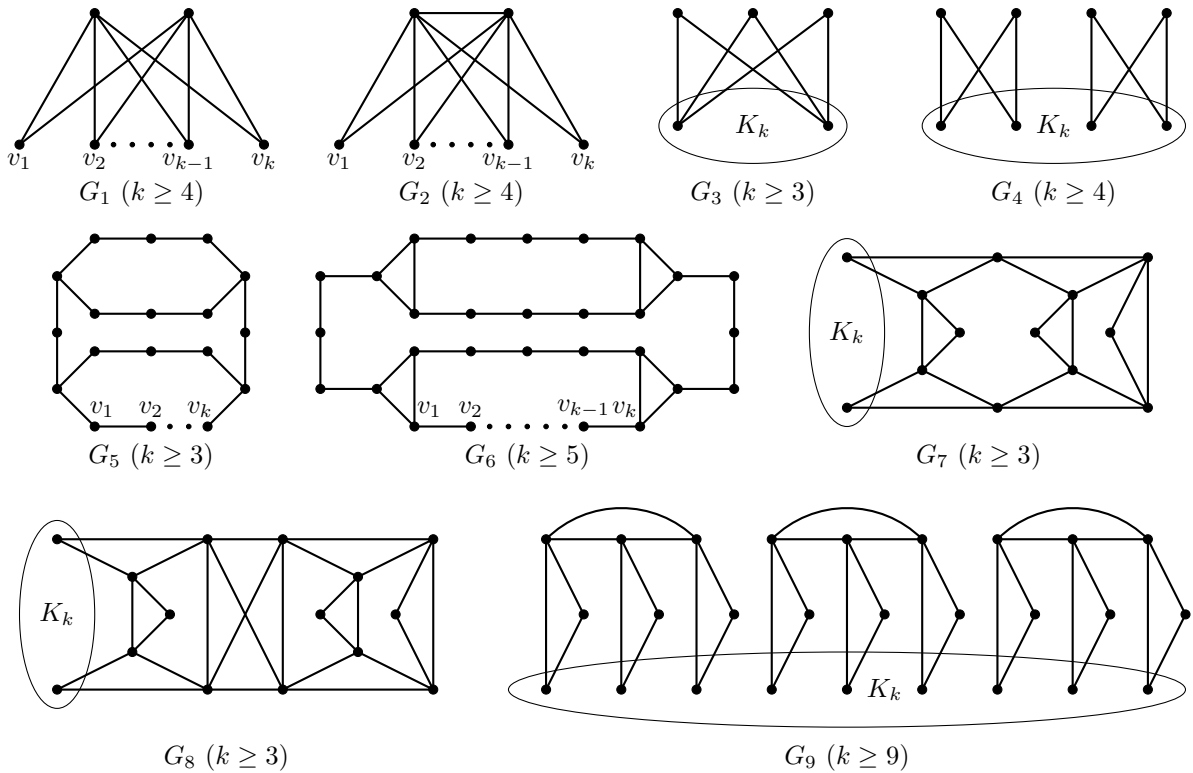
\begin{figure}[h]
\centering
\begin{tikzpicture}[scale=0.05]

\begin{scope}[xshift=-122.5cm,yshift=150cm]
\draw[fill=black] (-32.5,-5) {coordinate (v1)} {node[below] {$v_1$}} circle (1.2);
\draw[fill=black] (-12.5,-5) {coordinate (v2)} {node[below] {$v_2$}} circle (1.2);
\draw[fill=black] (12.5,-5) {coordinate (v3)} {node[below] {$v_{k-1}$}} circle (1.2);
\draw[fill=black] (32.5,-5) {coordinate (v4)} {node[below] {$v_k$}} circle (1.2);
\draw[fill=black] (-12.5,30) {coordinate (u1)} circle (1.2);
\draw[fill=black] (12.5,30) {coordinate (u2)} circle (1.2);
\foreach \x in {0,1,...,5} \draw[fill=black] (\x*5-12.5,-5) circle (0.6);
\foreach \x in {1,2} \foreach \y in {1,2,3,4} \draw[thick] (u\x)--(v\y);
\node[below] at (0,-12) {$G_1$ ($k\geq 4$)};
\end{scope}

\begin{scope}[xshift=-37.5cm,yshift=150cm]
\draw[fill=black] (-32.5,-5) {coordinate (v1)} {node[below] {$v_1$}} circle (1.2);
\draw[fill=black] (-12.5,-5) {coordinate (v2)} {node[below] {$v_2$}} circle (1.2);
\draw[fill=black] (12.5,-5) {coordinate (v3)} {node[below] {$v_{k-1}$}} circle (1.2);
\draw[fill=black] (32.5,-5) {coordinate (v4)} {node[below] {$v_k$}} circle (1.2);
\draw[fill=black] (-12.5,30) {coordinate (u1)} circle (1.2);
\draw[fill=black] (12.5,30) {coordinate (u2)} circle (1.2);
\foreach \x in {0,1,...,5} \draw[fill=black] (\x*5-12.5,-5) circle (0.6);
\draw[thick] (u1)--(u2);
\foreach \x in {1,2} \foreach \y in {1,2,3,4} \draw[thick] (u\x)--(v\y);
\node[below] at (0,-12) {$G_2$ ($k\geq 4$)};
\end{scope}

\begin{scope}[xshift=40cm,yshift=150cm]
\draw (0,0) {node {$K_k$}} ellipse (25 and 10);
\draw[fill=black] (-20,0) {coordinate (v1)} circle (1.2);
\draw[fill=black] (20,0) {coordinate (v2)} circle (1.2);
\draw[fill=black] (-20,30) {coordinate (u1)} circle (1.2);
\draw[fill=black] (0,30) {coordinate (u2)} circle (1.2);
\draw[fill=black] (20,30) {coordinate (u3)} circle (1.2);
\foreach \x in {1,2,3} \foreach \y in {1,2} \draw[thick] (u\x)--(v\y);
\node[below] at (0,-12) {$G_3$ ($k\geq 3$)};
\end{scope}

\begin{scope}[xshift=120cm,yshift=150cm]
\draw (0,0) {node {$K_k$}} ellipse (35 and 10);
\draw[fill=black] (-30,0) {coordinate (v1)} circle (1.2);
\draw[fill=black] (-10,0) {coordinate (v2)} circle (1.2);
\draw[fill=black] (10,0) {coordinate (v3)} circle (1.2);
\draw[fill=black] (30,0) {coordinate (v4)} circle (1.2);
\draw[fill=black] (-30,30) {coordinate (u1)} circle (1.2);
\draw[fill=black] (-10,30) {coordinate (u2)} circle (1.2);
\draw[fill=black] (10,30) {coordinate (u3)} circle (1.2);
\draw[fill=black] (30,30) {coordinate (u4)} circle (1.2);
\foreach \x in {1,2} \foreach \y in {1,2} \draw[thick] (u\x)--(v\y);
\foreach \x in {3,4} \foreach \y in {3,4} \draw[thick] (u\x)--(v\y);
\node[below] at (0,-12) {$G_4$ ($k\geq 4$)};
\end{scope}

\begin{scope}[xshift=-120cm,yshift=95cm]
\foreach \x in {1,2,3} \draw[fill=black] (-25,\x*15-30) {coordinate (x\x)} circle (1.2);
\foreach \x in {1,2,3} \draw[fill=black] (25,\x*15-30) {coordinate (y\x)} circle (1.2);
\foreach \x in {1,2,3} \draw[fill=black] (\x*15-30,-25) {coordinate (a\x)} circle (1.2);
\foreach \x in {1,2,3} \draw[fill=black] (\x*15-30,-5) {coordinate (b\x)} circle (1.2);
\foreach \x in {1,2,3} \draw[fill=black] (\x*15-30,5) {coordinate (c\x)} circle (1.2);
\foreach \x in {1,2,3} \draw[fill=black] (\x*15-30,25) {coordinate (d\x)} circle (1.2);
\draw[thick] (x1)--(x3) (y1)--(y3);
\draw[thick] (a1)--(x1)--(b1) (c1)--(x3)--(d1);
\draw[thick] (a3)--(y1)--(b3) (c3)--(y3)--(d3);
\draw[thick] (a1)--(a2) (b1)--(b3) (c1)--(c3) (d1)--(d3);
\foreach \x in {0,1,...,3} \draw[fill=black] (\x*5,-25) circle (0.6);
\node[above] at (-13,-25) {$v_1$}; \node[above] at (a2) {$v_2$}; \node[above] at (13,-25) {$v_k$};
\node[below] at (0,-27) {$G_5$ ($k\geq 3$)};
\end{scope}

\begin{scope}[xshift=-20cm,yshift=95cm]
\foreach \x in {1,2,3} \draw[fill=black] (-55,\x*15-30) {coordinate (x\x)} circle (1.2);
\foreach \x in {1,2,3} \draw[fill=black] (55,\x*15-30) {coordinate (y\x)} circle (1.2);
\draw[fill=black] (-40,-15) {coordinate (z11)} circle (1.2);
\draw[fill=black] (-40,15) {coordinate (z12)} circle (1.2);
\draw[fill=black] (40,-15) {coordinate (z21)} circle (1.2);
\draw[fill=black] (40,15) {coordinate (z22)} circle (1.2);
\foreach \x in {1,2,4,5} \draw[fill=black] (\x*15-45,-25) {coordinate (a\x)} circle (1.2);
\foreach \x in {1,2,...,5} \draw[fill=black] (\x*15-45,-5) {coordinate (b\x)} circle (1.2);
\foreach \x in {1,2,...,5} \draw[fill=black] (\x*15-45,5) {coordinate (c\x)} circle (1.2);
\foreach \x in {1,2,...,5} \draw[fill=black] (\x*15-45,25) {coordinate (d\x)} circle (1.2);
\draw[thick] (z11)--(x1)--(x3)--(z12) (z21)--(y1)--(y3)--(z22);
\draw[thick] (a1)--(z11)--(b1)--(a1) (c1)--(z12)--(d1)--(c1);
\draw[thick] (a5)--(z21)--(b5)--(a5) (c5)--(z22)--(d5)--(c5);
\draw[thick] (a1)--(a2) (a4)--(a5) (b1)--(b5) (c1)--(c5) (d1)--(d5);
\foreach \x in {0,1,...,6} \draw[fill=black] (\x*5-15,-25) circle (0.6);
\node[above] at (-26,-25) {$v_1$}; \node[above] at (a2) {$v_2$}; 
\node[above] at (a4) {$v_{k-1}$}; \node[above] at (26,-25) {$v_k$};
\node[below] at (0,-27) {$G_6$ ($k\geq 5$)};
\end{scope}

\begin{scope}[xshift=65cm,yshift=95cm]
\draw (0,0) {node {$K_k$}} ellipse (10 and 25);
\foreach \x in {1,2,3} \draw[fill=black] (\x*40-40,-20) {coordinate (x\x)} circle (1.2);
\foreach \x in {1,2,3} \draw[fill=black] (\x*40-40,20) {coordinate (y\x)} circle (1.2);
\foreach \x in {1,2,3} \draw[fill=black] (\x*20+10,0) {coordinate (u\x)} circle (1.2);
\draw[fill=black] (20,-10) {coordinate (z11)} circle (1.2);
\draw[fill=black] (20,10) {coordinate (z12)} circle (1.2);
\draw[fill=black] (60,-10) {coordinate (z21)} circle (1.2);
\draw[fill=black] (60,10) {coordinate (z22)} circle (1.2);
\draw[thick] (x1)--(x2)--(z11)--(x1) (y1)--(y2)--(z12)--(y1);
\draw[thick] (x2)--(x3)--(z21)--(x2) (y2)--(y3)--(z22)--(y2);
\draw[thick] (z11)--(z12)--(u1)--(z11) (z21)--(z22)--(u2)--(z21) (x3)--(y3)--(u3)--(x3);
\node[below] at (35,-27) {$G_7$ ($k\geq 3$)};
\end{scope}

\begin{scope}[xshift=-145cm,yshift=20cm]
\draw (0,0) {node {$K_k$}} ellipse (10 and 25);
\foreach \x in {0,2,3,5} 
{\draw[fill=black] (\x*20,-20) {coordinate (a\x)} circle (1.2);
\draw[fill=black] (\x*20,20) {coordinate (e\x)} circle (1.2);}
\foreach \x in {1,4} 
{\draw[fill=black] (\x*20,-10) {coordinate (b\x)} circle (1.2);
\draw[fill=black] (\x*20,10) {coordinate (d\x)} circle (1.2);}
\foreach \x in {1,3,4} \draw[fill=black] (\x*20+10,0) {coordinate (c\x)} circle (1.2);
\draw[thick] (a0)--(a5) (a0)--(b1)--(a2) (a3)--(b4)--(a5);
\draw[thick] (e0)--(e5) (e0)--(d1)--(e2) (e3)--(d4)--(e5);
\draw[thick] (b1)--(c1)--(d1)--(b1) (b4)--(c3)--(d4)--(b4) (a5)--(c4)--(e5)--(a5);
\draw[thick] (a2)--(e2)--(a3)--(e3)--(a2);
\node[below] at (45,-32) {$G_8$ ($k\geq 3$)};
\end{scope}

\begin{scope}[xshift=65cm]
\draw (0,0) ellipse (90 and 10); \node at (10,0) {$K_k$};
\foreach \x in {1,2,...,9} 
{\draw[fill=black] (\x*20-100,0) {coordinate (x\x)} circle (1.2);
\draw[fill=black] (\x*20-100,40) {coordinate (y\x)} circle (1.2);
\draw[fill=black] (\x*20-90,20) {coordinate (z\x)} circle (1.2);
\draw[thick] (x\x)--(y\x)--(z\x)--(x\x);}
\draw[thick] (y1)--(y3) (y4)--(y6) (y7)--(y9);
\foreach \x in {1,4,7} \draw[thick] (y\x) arc (135:45:28.3);
\node[below] at (0,-12) {$G_9$ ($k\geq 9$)};
\end{scope}

\end{tikzpicture}
\caption{Graphs $G_{i},i=1,\cdots,9$.}\label{G12345678}
\end{figure}

\begin{proof}[Proof of Observation~\ref{Th1}]
If $G$ is a 2-connected $P_3$-free graph, then $G$ is complete and clearly has a spanning $\varTheta$-subgraph (if it is not a cycle).
	
Conversely, let $R$ be a connected graph of order at least three such that every 2-connected non-cycle graph contains a spanning $\varTheta$-subgraph. Note that the graphs $G_1$ and $G_6$ are 2-connected and have no spanning $\varTheta$-subgraphs. Hence $R$ must be a common induced subgraph of $G_1$ and $G_6$. From $G_1$, we see that $R$ either is a star or contains an induced copy of $C_4$. However, $G_6$ is $C_4$-free and $K_{1,3}$-free. So $R$ must be a $P_3$.
\end{proof}

We now prove the necessity of Theorem \ref{Th2}.

\begin{proof}[Proof of the necessity of Theorem \ref{Th2}] Let $R,S$ be two connected graphs of order at least 3 other than $P_3$, such that every 2-connected non-cycle $\{R,S\}$-free graph has a spanning $\varTheta$-subgraph. Then each graph $G_i$, $i=1,\ldots,9$, contains $R$ or $S$ as an induced subgraph. We may assume without loss of generality that $G_1$ contains $R$. Note that all the induced subgraphs of $G_1$ with order at least 3 other than $P_3$ either is a star $K_{1,t}$ with $t\geq 3$, or contains an induced $C_4$. It follows that either $R=K_{1,t}$ with $t\geq 3$ or $R$ contains an induced $C_4$.

Suppose first that $R$ contains an induced $C_4$. Since $G_2$, $G_5$ and $G_6$ are all $C_4$-free, they contain $S$ as a common induced subgraph. Since $G_2$ and $G_5$ have no common induced cycles, we see that $S$ is a tree. Since $G_6$ is claw-free, we have that $S$ is a path. But then the largest induced path in $G_2$ is $P_3$ and thus $S=P_3$, a contradiction.

Suppose second that $R$ is a $K_{1,t}$ with $t\geq 5$. Since $G_3$, $G_5$, $G_6$ are all $K_{1,5}$-free, we conclude that they contain $S$ as a common induced subgraph. Since $G_3$ and $G_5$ have no common induced cycles and $G_6$ is claw-free, we have that $S$ is a path. But then the largest induced path in $G_3$ is $P_3$ and thus $S=P_3$, a contradiction.

Suppose thirdly that $R=K_{1,4}$. Since $G_4$, $G_5$, $G_6$ are all $K_{1,4}$-free, they contain $S$ as a common induced subgraph. Then we conclude that $S$ is a path since $G_4$, $G_5$ have no common induced cycles and $G_6$ is claw-free. Notice that the largest induced path in $G_4$ is $P_4$, implying that $S=P_4$.

Finaly we suppose that $R$ is a claw $K_{1,3}$. Since $G_6$, $G_7$, $G_8$, $G_9$ are all claw-free, they contain $S$ as a common induced subgraph. Notice that the largest induced path in $G_9$ is $P_8$. If $S$ is a path, then $S=P_t$ with $t\leq 8$, which is an induced subgraph of $B_{1,5}$, as desired. So we conclude that $S$ contains a cycle. Note that the common induced cycle of $G_6$ and $G_9$ is a triangle. It follows that every induced cycle in $S$ is a triangle. Also note that the length of induced paths connecting any two triangles in $G_6$ is at least four and the length of induced path connecting any two triangles in $G_9$ is at most three. This implies that $S$ contains exactly one triangle. Thus $S$ is a $Z_i$, $B_{i,j}$ or $N_{i,j,k}$. Note that $G_7$ is $\{Z_6,N_{1,1,5},N_{2,2,2}\}$-free, $G_8$ is $B_{3,3}$-free and $G_9$ is $\{B_{2,5},N_{1,2,4}\}$-free. 
It follows that $S$ does not contain each of $\{Z_6,B_{2,5},B_{3,3},N_{1,1,5},N_{1,2,4},N_{2,2,2}\}$ as an induced subgraph. That is, $S$ is an induced subgraph of $B_{1,5}$, $B_{2,4}$, $N_{1,1,4}$ or $N_{1,2,3}$. This proves the necessity of Theorem \ref{Th2}.
\end{proof}

We prove the sufficiency of Theorem \ref{Th2} by reducing to the following two results.

\begin{theorem}\label{Th5}
Let $G$ be a $2$-connected $\{K_{1,4},P_4\}$-free non-cycle graph. Then $G$ has a spanning $\varTheta$-subgraph.
\end{theorem}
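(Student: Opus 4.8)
The plan is to reduce to two classical tools: Egawa's theorem (Theorem~\ref{Th B}), which makes the structure of $2$-connected $P_4$-free graphs extremely rigid, and the Chv\'atal--Erd\H{o}s condition (Theorem~\ref{Th D}), together with the observation recorded in the introduction that a hamiltonian graph which is not a cycle automatically has a spanning $\varTheta$-subgraph.

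First I would dispose of the complete case: if $G=K_n$ then, since $G$ is $2$-connected and not a cycle, $n\ge 4$, and $K_n$ clearly has a spanning $\varTheta$-subgraph. So assume $G$ is not complete and let $S$ be a minimum vertex-cut, so $\kappa(G)=|S|\ge 2$. By Theorem~\ref{Th B}, every vertex of $S$ is adjacent to all vertices of $G-S$. From this I would deduce $\alpha(G)\le 3$: a maximum independent set $I$ must lie entirely inside $S$ or entirely inside $V(G)\setminus S$ (a vertex of $S$ and a vertex of $G-S$ are always adjacent), and if $|I|\ge 4$ then any vertex $z$ on the opposite side (which is nonempty) is adjacent to all of $I$, yielding an induced $K_{1,4}$. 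Consequently, if $\kappa(G)\ge 3$ then $\alpha(G)\le 3\le\kappa(G)$, so $G$ is hamiltonian by Theorem~\ref{Th D}, and not being a cycle it has a spanning $\varTheta$-subgraph.

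It then remains to handle $\kappa(G)=2$. Write $S=\{x,y\}$, so $x$ and $y$ are each adjacent to all of $G-S$, and let $C_1,\dots,C_k$ ($k\ge 2$) be the components of $G-S$. Choosing one vertex from each component gives an independent set, so $k\le 3$; and for distinct $i,j$ we have $\alpha(C_i)+\alpha(C_j)\le\alpha(G-S)\le\alpha(G)\le 3$, hence $\alpha(C_i)\le 2$ for every $i$. The key step is to show each $C_i$ has a Hamilton path $P_i$: this is trivial if $|V(C_i)|=1$, and otherwise the graph obtained from $C_i$ by adding $x$ as a universal vertex is $2$-connected (deleting a single vertex leaves a connected graph, since $x$ dominates the connected graph $C_i$) and has independence number $\alpha(C_i)\le 2$, so it is hamiltonian by Theorem~\ref{Th D}, and deleting $x$ leaves a Hamilton path of $C_i$. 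Let $a_i,b_i$ be the ends of $P_i$. If $k=2$ then $x\,a_1P_1b_1\,y\,b_2P_2a_2\,x$ (all four connecting edges exist because $x,y$ dominate $G-S$) is a Hamilton cycle of $G$, so $G$ is hamiltonian and, not being a cycle, has a spanning $\varTheta$-subgraph. If $k=3$ then the three paths $x\,a_iP_ib_i\,y$ for $i=1,2,3$ are internally disjoint, have common end-vertices $x,y$, and together contain all of $V(G)$, so they form a spanning $\varTheta$-subgraph.

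I do not anticipate a serious obstacle here; the only point requiring a small trick is the traceability of the components $C_i$, where one restores enough connectivity to apply Chv\'atal--Erd\H{o}s by attaching one of $x,y$ as a universal vertex before deleting it again. The rest is routine bookkeeping with Egawa's structure theorem and the bound on $\alpha(G)$ forced by $K_{1,4}$-freeness.
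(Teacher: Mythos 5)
Your proposal is correct and follows essentially the same route as the paper: Egawa's theorem to make the minimum cut $S$ dominate $G-S$, the bound $\alpha(G)\le 3$ forced by $K_{1,4}$-freeness, and Chv\'atal--Erd\H{o}s to settle the case $\kappa(G)\ge 3$. The only (immaterial) difference is the endgame for $\kappa(G)=2$: the paper notes that some component $H$ of $G-S$ is a clique, obtains a Hamilton cycle of the $2$-connected graph $G-H$ (which has independence number at most $2$) from Chv\'atal--Erd\H{o}s and attaches an $x$--$y$ path through $H$, whereas you make each component traceable by adding a dominating vertex before applying Chv\'atal--Erd\H{o}s and then assemble the paths directly --- both versions work.
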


\begin{proof}
Let $S$ be a smallest vertex-cut of $G$. By Theorem \ref{Th B}, every vertex in $S$ is adjacent to all vertices in $G-S$. It follows that any independent set of $G$ is either contained in $S$ or in $G-S$. We show that $\alpha(G)\leq 3$. Suppose otherwise that $\alpha(G)\geq 4$. Let $A$ be a maximum independent set of $G$. If $A\subseteq S$, then this together with a vertex in $G-S$, we can get an induced star $K_{1,k}$ with $k\geq 4$; and if $A\subseteq V(G)\backslash S$, then this together with a vertex in $S$, we can get an induced star $K_{1,k}$ with $k\geq 4$, both contradictions. So we conclude that $\alpha(G)\leq 3$. 

If $\kappa(G)\geq 3$, then $G$ is hamiltonian by Theorem \ref{Th D}, and then has a spanning $\varTheta$-subgraph, as desired. So we conclude that $\kappa(G)=2$, i.e., $|S|=2$. Set $S=\{x,y\}$. 

Notice that $G-S$ has at least two component. Since $\alpha(G)\leq 3$, we see that there is a component of $G-S$ that is a clique, say $H$. Now $G-H$ is 2-connected and has independence  number at most 2. It follows that $G-H$ has a Hamilton cycle $C$. Recall that $H$ is a clique and each vertex in $\{x,y\}$ is adjacent to all vertices in $H$. We have that $\langle\{x,y\}\cup V(H)\rangle$ contains a Hamilton path $P$ from $x$ to $y$. Now $C\cup P$ is a spanning $\varTheta$-subgraph of $G$, as desired.
\end{proof}

\begin{theorem}
Let $G$ be a $2$-connected $\{K_{1,3},S\}$-free non-cycle graph, where $S\in\{B_{1,5},B_{2,4},N_{1,1,4},N_{1,2,3}\}$. Then $G$ has a spanning $\varTheta$-subgraph.
\end{theorem}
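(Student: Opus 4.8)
The plan is to deduce the statement from Theorem~\ref{Th3} by a minimality argument, reducing everything to the claim that each graph in the families $\mathcal{H}_1,\dots,\mathcal{H}_7$ contains an induced copy of $S$, and then to prove that claim by an explicit (if lengthy) case analysis.

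First I would argue by contradiction: suppose $G$ is a $2$-connected $\{K_{1,3},S\}$-free non-cycle graph without a spanning $\varTheta$-subgraph. Being $K_{1,3}$-free, $G$ is claw-free, so the collection of induced subgraphs of $G$ that are $2$-connected, non-cycle, claw-free, and without a spanning $\varTheta$-subgraph is non-empty (it contains $G$ itself) and hence has a minimal member $G_0$. Every proper induced subgraph of $G_0$ is again claw-free, so it must fail one of the remaining three conditions: it has connectivity less than $2$, or is a cycle, or has a spanning $\varTheta$-subgraph. This is exactly the hypothesis of Theorem~\ref{Th3}, so $G_0\in\mathcal{H}_1\cup\cdots\cup\mathcal{H}_7$. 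Since $G_0$ is an induced subgraph of the $S$-free graph $G$, a contradiction will follow once we show, for all four graphs $B_{1,5},B_{2,4},N_{1,1,4},N_{1,2,3}$: \emph{every graph in $\mathcal{H}_1\cup\cdots\cup\mathcal{H}_7$ contains an induced copy of $S$}.

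To prove this I would fix $S$ and go through the seven families. In each family the ambient graph is assembled from a small, highly connected ``skeleton'' --- triangles, together with some cliques in $\mathcal{H}_2,\mathcal{H}_3,\mathcal{H}_4$ and $\mathcal{L}_1$-links in $\mathcal{H}_5,\mathcal{H}_6$ --- by joining prescribed pairs of skeleton vertices with pure links, plus a chain in $\mathcal{H}_5,\mathcal{H}_6,\mathcal{H}_7$. To produce an induced $S$ one picks a convenient triangle $T$ of the skeleton and grows the pendant paths of $S$ out of distinct vertices of $T$ along the links and chain attached to them. The two observations that make this work irrespective of the actual link and chain lengths are: (i) an inner vertex of a pure link has degree $2$ in the ambient graph, so omitting one of its two neighbours makes it a leaf of the induced subgraph and lets a pendant path stop at any prescribed length; and (ii) when the links encountered are short (triangles, or paths of length $2$) one routes onward through the next skeleton triangle, and since the multigraphs $M_1,\dots,M_4$ are $3$-edge-connected and the ambient graphs in the remaining families are $2$-connected, there is always enough branching to carry an induced path of the required length, with the degree-$2$ link vertices furnishing the leaves at its far ends. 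In practice one realizes a triangle with pendant paths longer than required and then truncates them to the exact lengths prescribed by $S$.

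The main obstacle is exactly this verification: there is no single slick embedding valid for all seven families at once, so it becomes a genuine case analysis, splitting according to which links are triangles versus longer paths, which of $L_1,L_2$ lies in $\mathcal{L}_1$ versus $\mathcal{L}_2$ (hence whether $\mathcal{L}_3$ intervenes and $x_1y_1$ or $x_2y_2$ is present), and the type (BB, TB, BT, TT) and length $k$ of the chain in $\mathcal{H}_5,\mathcal{H}_6,\mathcal{H}_7$; the families $\mathcal{H}_5,\mathcal{H}_6,\mathcal{H}_7$ are the delicate ones, whereas $\mathcal{H}_1,\dots,\mathcal{H}_4$ are comparatively routine, being the unfoldments of the explicit four-vertex multigraphs $M_1,\dots,M_4$. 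Once the embedding claim is established for all four $S$, the theorem follows, and together with Theorem~\ref{Th5} this completes the sufficiency direction of Theorem~\ref{Th2}.
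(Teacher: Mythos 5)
Your proposal is correct and follows essentially the same route as the paper: pass to a minimal counterexample so that Theorem~\ref{Th3} applies, then verify that every graph in $\mathcal{H}_1\cup\cdots\cup\mathcal{H}_7$ contains an induced copy of each of $B_{1,5},B_{2,4},N_{1,1,4},N_{1,2,3}$ by selecting a skeleton triangle and growing pendant induced paths of guaranteed minimum lengths along the pure links and the chain, truncating at a degree-$2$ inner link vertex. The case analysis you defer is precisely the paper's Cases A--G (a single induced $N_{1,2,5}$ handles $\mathcal{H}_1,\mathcal{H}_2,\mathcal{H}_3$, while $\mathcal{H}_4$--$\mathcal{H}_7$ need separate constructions split on whether particular links are triangles and, for $N_{1,2,3}$ in $\mathcal{H}_7$, on whether the chain is pure), so your plan is the right one and does go through, but as written it omits the entire substantive verification.
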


\begin{proof}
Suppose that $G$ has no spanning $\varTheta$-subgraphs. Since $G$ is claw-free, by Theorem \ref{Th3}, $G$ contains one of graphs in $\mathcal{H}_1\cup\cdots\cup\mathcal{H}_7$ as an induced subgraph. Now we show that each graph in $\mathcal{H}_1\cup\cdots\cup\mathcal{H}_7$ contains an induced copy of $S$. This will complete the proof of the theorem.

Let $F\in\mathcal{H}_1\cup\cdots\cup\mathcal{H}_7$. We label the vertices of $F$ as in Figures \ref{H1234} and \ref{H567}. If $u$ is an end-vertices of a pure link of $F$, then we denote by $\bar{u}$ the neighbor of $u$ which is  an inner vertex of this pure link.
 If $F$ has a pure link $L$ from $u$ to $v$, then we use $P[u,v]$ to denote a shortest path from $u$ to $v$ contained in $L$ (so if $L$ is a path, then $P[u,v]=L$, and if $L$ is a triangle, then $P[u,v]=uv$). Clearly $|P[u,v]|\geq 1$. For $F\in\mathcal{H}_5\cup\mathcal{H}_6\cup\mathcal{H}_7$, there is a chain $H$ from $\{x_1,y_1\}$ to $\{x_2,y_2\}$. We assume without loss of generality that $H$ contains two vertex-disjoint induced path from $x_1$ to $x_2$ and from $y_1$ to $y_2$, which we denote by $Q[x_1,x_2]$ and $Q[y_1,y_2]$. We remark that possibly $|Q[x_1,x_2]|=0$ or $|Q[y_1,y_2]|=0$.

\underline{Case A. $F\in\mathcal{H}_1$.} Let $T$ be the triangle $x_1y_1z_1x_1$, and let $P_1=x_1\bar{x}_1$, $P_2=y_1P[y_1,y_3]y_3x_3$ and $P_3=z_1P[z_1,z_2]z_2y_2P[y_2,y'_2]y'_2z'_3\bar{z}'_3$. Clearly, $P_1$, $P_2$ and $P_3$ are pairwise vertex-disjoint induced paths of $F$. Recall that $|P[y_1,y_3]|\geq 1$, $|P[z_1,z_2]|\geq 1$ and $|P[y_2,y'_2]|\geq 1$. It follows that $|P_1|=1$, $|P_2|\geq 2$ and $|P_3|\geq 5$. This implies that $T\cup P_1\cup P_2\cup P_3$ contains an induced copy of $N_{1,2,5}$, which contains $B_{1,5}$, $B_{2,4}$, $N_{1,1,4}$ and $N_{1,2,3}$. Therefore, $F$ contains $B_{1,5}$, $B_{2,4}$, $N_{1,1,4}$ and $N_{1,2,3}$ as induced subgraphs.

\underline{Case B. $F\in\mathcal{H}_2$.} Let $T=x_1y_1z_1x_1$, and let $P_1=x_1\bar{x}_1$, $P_2=y_1P[y_1,y_3]y_3x_3$ and $P_3=z_1P[z_1,z_2]z_2y_2P[y_2,y'_2]$ $y'_2z'_3\bar{z}'_3$. Similarly as the analysis in Case A, we see that $|P_1|=1$, $|P_2|\geq 2$ and $|P_3|\geq 5$. This implies that $T\cup P_1\cup P_2\cup P_3$ contains an induced copy of $N_{1,2,5}$, which contains induced copies of $B_{1,5}$, $B_{2,4}$, $N_{1,1,4}$ and $N_{1,2,3}$.

\underline{Case C. $F\in\mathcal{H}_3$.} Let $T=x_2y_2z_2x_2$, and let $P_1=y_2\bar{y}_2$, $P_2=x_2P[x_2,x_3]x_3y_3$ and $P_3=z_2P[z_2,z'_2]x'_1P[x'_1,x_1]$ $x_1y_1\bar{y}_1$. Similarly as the analysis in Case A, we see that $|P_1|=1$, $|P_2|\geq 2$ and $|P_3|\geq 5$. This implies that $T\cup P_1\cup P_2\cup P_3$ contains an induced copy of $N_{1,2,5}$, which contains induced copies of $B_{1,5}$, $B_{2,4}$, $N_{1,1,4}$ and $N_{1,2,3}$.

\underline{Case D. $F\in\mathcal{H}_4$.} First we show that $F$ contains induced copies of $B_{1,5}$ and $N_{1,1,4}$. let $T=x_1y_1z_1x_1$, and let $P_1=x_1\bar{x}_1$, $P_2=y_1\bar{y}_1$ and $P_3=z_1P[z_1,z'_1]z'_1x'_2P[x'_2,x_2]x_2y_2\bar{y}_2$. We have that $|P_1|=|P_2|=1$ and $|P_3|\geq 5$. This implies that $T\cup P_1\cup P_2\cup P_3$ contains an induced copy of $N_{1,1,5}$, which contains induced copies of $B_{1,5}$ and $N_{1,1,4}$.

Second we show that $F$ contains an induced copy of $N_{1,2,3}$. Let $T=x'_1x'_2x'_3x'_1$, and let $P_1=x'_1\bar{x}'_1$, $P_2=x'_2P[x'_2,x_2]x_2y_2$ and $P_3=x'_3P[x'_3,x_3]x_3y_3\bar{y}_3$. We have that $|P_1|=1$, $|P_2|\geq 2$ and $|P_3|\geq 3$, and $T\cup P_1\cup P_2\cup P_3$ contains an induced copy of $N_{1,2,3}$. 

Finally we show that $F$ contains an induced copy of $B_{2,4}$. Let $L$ be the pure link of $F$ from $x_1$ to $x'_1$. If $L$ is a triangle, then let $T=L$, $P_1=x_1y_1\bar{y}_1$ and $P_2=x'_1x'_2P[x'_2,x_2]x_2y_2\bar{y}_2$. We have that $|P_1|= 2$, $|P_2|\geq 4$ and $T\cup P_1\cup P_2$ contains an induced copy of $B_{2,4}$. Now assume that $L$ is not a triangle, implying that $|P[x_1,x'_1]|\geq 2$. Let $T=x'_1x'_2x'_3x'_1$, and let $P_1=x'_2P[x'_2,x_2]x_2y_2$ and $P_2=x'_1P[x'_1,x_1]x_1y_1\bar{y}_1$. We have that $|P_1|\geq 2$, $|P_2|\geq 4$ and $T\cup P_1\cup P_2$ contains an induced copy of $B_{2,4}$.

\underline{Case E. $F\in\mathcal{H}_5$.} First we show that $F$ contains an induced copy of $B_{1,5}$. Let $T=a'_1b'_1c'_1a'_1$, $P_1=a'_1\bar{a}'_1$ and $P_2=b'_1P[b'_1,b_1]b_1x_1Q[x_1,x_2]x_2a_2P[a_2,a'_2]a'_2b'_2$. We have that $|P_1|=1$, $|P_2|\geq 5$ and $T\cup P_1\cup P_2$ contains an induced copy of $B_{1,5}$.

Now we show that $F$ contains induced copies of $B_{2,4}$, $N_{1,1,4}$ and $N_{1,2,3}$. Let $T=a_1b_1x_1a_1$, $P_1=a_1\bar{a}_1$, $P_2=b_1P[b_1,b'_1]b'_1c'_1$ and $P_3=x_1Q[x_1,x_2]x_2a_2P[a_2,a'_2]a'_2b'_2\bar{b}'_2$. We have that $|P_1|=1$, $|P_2|\geq 2$, $|P_3|\geq 4$ and $T\cup P_1\cup P_2\cup P_3$ contains an induced copy of $N_{1,2,4}$, which contains induced copies of $B_{2,4}$, $N_{1,1,4}$ and $N_{1,2,3}$.

\underline{Case F. $F\in\mathcal{H}_6$.} First we show that $F$ contains an induced copy of $B_{1,5}$. Let $T=a'_1b'_1c'_1a'_1$, $P_1=a'_1\bar{a}'_1$ and $P_2=b'_1P[b'_1,b_1]b_1x_1Q[x_1,x_2]x_2a_2P[a_2,a'_2]a'_2b'_2$. We have that $|P_1|=1$, $|P_2|\geq 5$ and $T\cup P_1\cup P_2$ contains an induced copy of $B_{1,5}$.

Now we show that $F$ contains induced copies of $B_{2,4}$, $N_{1,1,4}$ and $N_{1,2,3}$. Let $T=a_1b_1x_1a_1$, $P_1=a_1\bar{a}_1$, $P_2=b_1P[b_1,b'_1]b'_1c'_1$ and $P_3=x_1Q[x_1,x_2]x_2a_2P[a_2,a'_2]a'_2b'_2\bar{b}'_2$. We have that $|P_1|=1$, $|P_2|\geq 2$, $|P_3|\geq 4$ and $T\cup P_1\cup P_2\cup P_3$ contains an induced copy of $N_{1,2,4}$, which contains induced copies of $B_{2,4}$, $N_{1,1,4}$ and $N_{1,2,3}$.

\underline{Case G. $F\in\mathcal{H}_7$.} First we show that $F$ contains an induced copy of $N_{1,1,4}$. Let $T=a_1b_1x_1a_1$, $P_1=a_1\bar{a}_1$, $P_2=b_1\bar{b}_1$ and $P_3=x_1Q[x_1,x_2]x_2a_2P[a_2,a'_2]a'_2b'_2\bar{b}'_2$. We have that $|P_1|=|P_2|=1$, $|P_3|\geq 4$ and $T\cup P_1\cup P_2\cup P_3$ contains an induced copy of $N_{1,1,4}$.

Second we show that $F$ contains induced copies of $B_{1,5}$ and $B_{2,4}$. Let $L$ be the pure link from $a_1$ to $a'_1$. If $L$ is a triangle, then let $T=L$, $P_1=a_1b_1\bar{b}_1$ and $P_2=a'_1y_1Q[y_1,y_2]y_2a'_2P[a'_2,a_2]a_2b_2\bar{b}_2$. We have that $|P_1|=2$, $|P_2|\geq 5$ and $T\cup P_1\cup P_2$ contains an induced copy of $B_{2,5}$, which contains induced copies of $B_{1,5}$ and $B_{2,4}$. Assume now that $L$ is not a triangle and thus $|P[a_1,a'_1]|\geq 2$. Let $T=a_2b_2x_2a_2$, $P_1=a_2\bar{a}_2$, $P_2=x_2Q[x_2,x_1]x_1a_1P[a_1,a'_1]a'_1b'_1\bar{b}'_1$. We have that $|P_1|=1$, $|P_2|\geq 5$ and $T\cup P_1\cup P_2$ contains an induced copy of $B_{1,5}$. Again let $T=a_1b_1x_1a_1$, $P_1=P[a_1,a'_1]$, $P_2=x_1Q[x_1,x_2]x_2a_2P[a_2,a'_2]a'_2b'_2\bar{b}'_2$. We have that $|P_1|\geq 2$, $|P_2|\geq 4$ and $T\cup P_1\cup P_2$ contains an induced copy of $B_{2,4}$.

Finally we show $F$ contains an induced copy of $N_{1,2,3}$. 
Recall that $H$ is a chain from $\{x_1,y_1\}$ to $\{x_2,y_2\}$. 
If $H$ is a pure chain (i.e., $k=1$), then it is a bipath chain, implying that there are no edges between $Q[x_1,x_2]$ and $Q[y_1,y_2]$. Let $T=a_1b_1x_1a_1$, $P_1=a_1\bar{a}_1$, $P_2=x_1Q[x_1,x_2]x_2a_2\bar{a}_2$, $P_3=b_1P[b_1,b'_1]b'_1y_1Q[y_1,y_2]y_2b'_2$. We have that $|P_1|=1$, $|P_2|\geq 2$, $|P_3|\geq 3$ and $T\cup P_1\cup P_2\cup P_3$ contains an induced copy of $N_{1,2,3}$. 

Now assume that $H$ is not a pure chain. This implies that $H$ is a chain which is  
consists of at least two pure chains. Let $H_1,H_2$ be the first two pure chains, where $H_1$ is from $\{x_1,y_1\}$ to $\{u_1,v_1\}$ and $\{u_2,v_2\}$ be the original of $H_2$. It follows that $\langle \{u_1,v_1,u_2,v_2\}\rangle$ is a 4-clique. Assume without loss of generality that $u_1,u_2\in V(Q[x_1,x_2])$ and $v_1,v_2\in V(Q[y_1,y_2])$. It follows that $Q[u_1,x_1]$, $Q[u_2,x_2]$ are subpaths of $Q[x_1,x_2]$,
and $Q[v_1,y_1]$ is a subpath of
$Q[y_1,y_2]$. Let $T=u_1v_1u_2u_1$, $P_1=u_1Q[u_1,x_1]x_1a_1$, $P_2=v_1Q[v_1,y_1]y_1b'_1\bar{b}'_1$ and $P_3=u_2Q[u_2,x_2]x_2a_2P[a_2,a'_2]a'_2b'_2$. Recall that $H_1$ is bipath chain. It follows that there are no edges between $P_1$ and $P_2$ other than $u_1v_1$. We have that $|P_1|\geq 1$, $|P_2|\geq 2$, $|P_3|\geq 3$ and $T\cup P_1\cup P_2\cup P_3$ contains an induced copy of $N_{1,2,3}$.
\end{proof}

\end{document}